\newif\ifpersonal
\theoremstyle{plain}
\newtheorem{thm-intro}{Theorem}
\newtheorem{thm}{Theorem}[section]
\newtheorem*{thm*}{Theorem}
\newtheorem{lem}[thm]{Lemma}
\newtheorem{prop}[thm]{Proposition}
\newtheorem{cor}[thm]{Corollary}
\theoremstyle{definition}
\newtheorem{defin}[thm]{Definition}
\newtheorem{eg}[thm]{Example}
\theoremstyle{remark}
\newtheorem{rem}[thm]{Remark}
\numberwithin{equation}{section}
\newcommand*{\personal}[1]{\textcolor[rgb]{0.6,0.6,1}{(Personal: #1)}}
\newcommand*{\todo}[1]{\textcolor{red}{(Todo: #1)}}
\newcommand*{\personal}[1]{\ignorespaces}
\newcommand*{\todo}[1]{\ignorespaces}
\newcommand{\rB}{\mathrm B}
\newcommand{\rH}{\mathrm H}
\newcommand{\rR}{\mathrm R}
\newcommand{\cA}{\mathcal A}
\newcommand{\cC}{\mathcal C}
\newcommand{\cD}{\mathcal D}
\newcommand{\cF}{\mathcal F}
\newcommand{\cH}{\mathcal H}
\newcommand{\cG}{\mathcal G}
\newcommand{\cJ}{\mathcal J}
\newcommand{\cO}{\mathcal O}
\newcommand{\cS}{\mathcal S}
\newcommand{\cT}{\mathcal T}
\newcommand{\cV}{\mathcal V}
\newcommand{\cX}{\mathcal X}
\newcommand{\cY}{\mathcal Y}
\DeclareFontFamily{U}{BOONDOX-calo}{\skewchar\font=45 }
\DeclareFontShape{U}{BOONDOX-calo}{m}{n}{<-> s*[1.05] BOONDOX-r-calo}{}
\DeclareFontShape{U}{BOONDOX-calo}{b}{n}{<-> s*[1.05] BOONDOX-b-calo}{}
\DeclareMathAlphabet{\mathcalboondox}{U}{BOONDOX-calo}{m}{n}
\let\save@mathaccent\mathaccent
\newcommand*\if@single[3]{%
	\setbox0\hbox{${\mathaccent"0362{#1}}^H$}%
	\setbox2\hbox{${\mathaccent"0362{\kern0pt#1}}^H$}%
	\ifdim\ht0=\ht2 #3\else #2\fi
}
\newcommand*\rel@kern[1]{\kern#1\dimexpr\macc@kerna}
\newcommand*\widebar[1]{\@ifnextchar^{{\wide@bar{#1}{0}}}{\wide@bar{#1}{1}}}
\newcommand*\wide@bar[2]{\if@single{#1}{\wide@bar@{#1}{#2}{1}}{\wide@bar@{#1}{#2}{2}}}
\newcommand*\wide@bar@[3]{%
	\begingroup
	\def\mathaccent##1##2{%
		\let\mathaccent\save@mathaccent
		\if#32 \let\macc@nucleus\first@char \fi
		\setbox\z@\hbox{$\macc@style{\macc@nucleus}_{}$}%
		\setbox\tw@\hbox{$\macc@style{\macc@nucleus}{}_{}$}%
		\dimen@\wd\tw@
		\advance\dimen@-\wd\z@
		\divide\dimen@ 3
		\@tempdima\wd\tw@
		\advance\@tempdima-\scriptspace
		\divide\@tempdima 10
		\advance\dimen@-\@tempdima
		\ifdim\dimen@>\z@ \dimen@0pt\fi
		\rel@kern{0.6}\kern-\dimen@
		\if#31
		\overline{\rel@kern{-0.6}\kern\dimen@\macc@nucleus\rel@kern{0.4}\kern\dimen@}%
		\advance\dimen@0.4\dimexpr\macc@kerna
		\let\final@kern#2%
		\ifdim\dimen@<\z@ \let\final@kern1\fi
		\if\final@kern1 \kern-\dimen@\fi
		\else
		\overline{\rel@kern{-0.6}\kern\dimen@#1}%
		\fi
	}%
	\macc@depth\@ne
	\let\math@bgroup\@empty \let\math@egroup\macc@set@skewchar
	\mathsurround\z@ \frozen@everymath{\mathgroup\macc@group\relax}%
	\macc@set@skewchar\relax
	\let\mathaccentV\macc@nested@a
	\if#31
	\macc@nested@a\relax111{#1}%
	\else
	\def\gobble@till@marker##1\endmarker{}%
	\futurelet\first@char\gobble@till@marker#1\endmarker
	\ifcat\noexpand\first@char A\else
	\def\first@char{}%
	\fi
	\macc@nested@a\relax111{\first@char}%
	\fi
	\endgroup
}
\newcommand{\PSh}{\mathrm{PSh}}
\newcommand{\Sh}{\mathrm{Sh}}
\newcommand{\rSet}{\mathrm{Set}}
\newcommand{\tauan}{\tau_\mathrm{an}}
\newcommand{\Mod}{\textrm{-}\mathrm{Mod}}
\newcommand{\Coh}{\mathrm{Coh}}
\newcommand{\Cohh}{\mathrm{Coh}^\heartsuit}
\newcommand{\Stn}{\mathrm{Stn}_{\mathbb C}}
\newcommand{\An}{\mathrm{An}}
\newcommand{\bfMap}{\mathbf{Map}}
\newcommand{\cTan}{\cT_{\mathrm{an}}}
\newcommand{\trunc}{\mathrm{t}_0}
\newcommand{\cHom}{\cH \mathrm{om}}
\newcommand{\St}{\mathbf{St}}
\newcommand{\Lan}{\mathrm{Lan}}
\newcommand{\Perf}{\mathrm{Perf}}
\newcommand{\Cat}{\mathrm{Cat}}
\newcommand{\bfPerf}{\mathbf{Perf}}
\newcommand{\Catst}{\Cat_\infty^{\mathrm{Ex}}}
\newcommand{\dAn}{\mathrm{dAn}}
\newcommand{\dStn}{\mathrm{dStn}_{\mathbb C}}
\newcommand{\Stnred}{\Stn^{\mathrm{red}}}
\newcommand{\dAnSt}{\mathrm{dAnSt}_{\mathbb C}}
\newcommand{\red}{_\mathrm{red}}
\newcommand{\DR}{_\mathrm{dR}}
\newcommand{\B}{_\mathrm{B}}
\newcommand{\topl}{^\mathrm{top}}
\newcommand{\AnRing}{\mathrm{AnRing}}
\newcommand{\bfAnPerf}{\mathbf{AnPerf}}
\newcommand{\bfAnCoh}{\mathbf{AnCoh}}
\newcommand{\an}{^\mathrm{an}}
\newcommand{\alg}{^\mathrm{alg}}
\newcommand{\ev}{\mathrm{ev}}
\newcommand{\inv}{^{-1}}
\newcommand{\canal}{$\mathbb C$-analytic\xspace}
\newcommand{\op}{^\mathrm{op}}
\tikzset{
  closed/.style = {decoration = {markings, mark = at position 0.5 with { \node[transform shape, xscale = .8, yscale=.4] {/}; } }, postaction = {decorate} },
  open/.style = {decoration = {markings, mark = at position 0.5 with { \node[transform shape, scale = .7] {$\circ$}; } }, postaction = {decorate} }
}
\DeclareMathOperator{\Fun}{Fun}
\DeclareMathOperator{\Hom}{Hom}
\DeclareMathOperator{\Map}{Map}
\DeclareMathOperator{\Sp}{Sp}
\DeclareMathOperator*{\colim}{colim}
\begin{document}

\title{The derived Riemann-Hilbert correspondence}

\author{Mauro PORTA}
\address{Mauro PORTA, University of Pennsylvania, David Rittenhouse Laboratory, 209 South 33rd Street, Philadelphia, PA 19104, United States}
\email{maurop@math.upenn.edu}
\subjclass[2010]{Primary 32D13; Secondary 14D20}
\keywords{derived geometry, derived analytic geometry, Riemann-Hilbert correspondence}

\date{\today}

\begin{abstract}
	In this short paper we prove a derived version of the Riemann-Hilbert correspondence of Deligne and Simpson.
	Our generalization is twofold: on one side we consider families of representations of the full homotopy type of a smooth analytic space $X$ in the stable $\infty$-category of complexes of coherent sheaves and of perfect complexes; on the other side, we allow the base parametrizing such families to be derived \canal spaces.
\end{abstract}

\maketitle

\tableofcontents

\section{Introduction}

The Riemann-Hilbert correspondence is a classical problem, notorious for its richness and depth.
Over the years, it has received the attention of several illustrious mathematicians, such as P.\ Deligne \cite{Deligne_Equations_differentielles}, C.\ Simpson \cite{Simpson_Moduli_representations_I,Simpson_Moduli_representations_II}, M.\ Kashiwara \cite{Kashiwara_Riemann_Hilbert}.
Classically, the Riemann-Hilbert correspondence is an equivalence between the triangulated category of regular holonomic $\cD$-modules and that of constructible sheaves.
More recently, M.\ Kashiwara and A.\ D'Agnolo \cite{Kashiwara_DAgnolo_Irregular_riemann} extended this equivalence to all holonomic $\cD$-modules (see also the survey \cite{Kashiwara_Schapira_Regular_irregular}).

If instead of looking at all the constructible sheaves we restrict our attention to the full subcategory of locally constant sheaves, we obtain a particularly simple version of the Riemann-Hilbert correspondence.
On the $\cD$-module side, locally constant sheaves correspond to those $\cD$-modules whose underlying $\cO_X$-module is locally free.
With little extra work, this correspondence can be made in families.
We recall the precise statement:

\begin{thm-intro}[{P.\ Deligne, see \cite[Théorème 2.23]{Deligne_Equations_differentielles}}] \label{thm:Deligne}
	Let $p \colon X \to S$ be a smooth morphism of \canal spaces.
	Then there is an equivalence of categories
	\[ \mathrm{Loc}(X / S) \simeq \Coh^{\mathrm{fc}}(X / S) , \]
	where the left hand side denotes the category of local systems on $X$ relative to $S$ and the right hand side denotes the category of coherent sheaves on $X$ equipped with a flat connection which is $p\inv \cO_S$-linear.
\end{thm-intro}

One of the fundamental insights of C.\ Simpson \cite{Simpson_Moduli_representations_I,Simpson_Moduli_representations_II} is that one can rewrite the above correspondence as an equivalence of certain \emph{mapping stacks}.
More precisely, C.\ Simpson remarked that given an smooth \canal space $X$, we can associate to it a variety of different ``shapes'' that encode different aspects of the geometry of $X$.
By ``shape'', we simply mean sheaves
\[ \Stn\op \to \cS , \]
where $\cS$ is the $\infty$-category of spaces, and $\Stn$ is the category of Stein spaces (equipped with the usual Grothendieck topology generated by open immersions).
Two of the most significant shapes associated to $X$ are the \emph{Betti shape}, denoted $X\B$, and the \emph{de Rham shape}, denoted $X\DR$.
We refer to \cref{sec:shapes} for a precise definition of these objects.
For the moment, we content ourselves of recalling their characteristic properties:
\begin{enumerate}
	\item on one hand, natural transformations from $X\B$ to the analytic stack in categories $\mathbf{AnVect}_n$ of holomorphic vector bundles of rank $n$ corresponds to representations of the first fundamental groupoid $\pi_1(X\topl)$ of $X$ in $\mathbb C^n$, where $X\topl$ denotes the underlying topological space of $X$.
	\item On the other hand, if $n = \dim_{\mathbb C}(X)$, then natural transformations from $X\DR$ to the Eilenberg-Maclane stack $\mathrm K(\mathbf A^1_{\mathbb C}, n)$ can be naturally arranged into a complex of abelian groups that computes the de Rham cohomology of $X$.
\end{enumerate}
Another remarkable property of $X\DR$ that is relevant for our purposes is the following: natural transformations from $X\DR$ to $\mathbf{AnVect}_n$ form a category that is equivalent to the category of holomorphic vector bundles on $X$ equipped with a flat connection.

With this terminology, we can rephrase the absolute version of \cref{thm:Deligne} as follows: there is a natural equivalence
\begin{equation} \label{eq:RH_reformulation_I}
	\Map_{\mathrm{AnSt}_{\mathbb C}^{\mathrm{cat}}}( X\B,  \mathbf{AnVect}_n) \simeq \Map_{\mathrm{AnSt}_{\mathbb C}^{\mathrm{cat}}}(X\DR, \mathbf{AnVect}_n) .
\end{equation}
Here $\mathrm{AnSt}_{\mathbf C}^{\mathrm{cat}}$ denotes the $\infty$-category of sheaves on $\Stn$ with values in ($\infty$-)categories.

\begin{rem}
	The language of higher category theory is unavoidable in the above formulation of \cref{thm:Deligne}.
	Indeed, if $X\DR$ is simply a sheaf of sets, we will see that the Betti shape $X\B$ is a sheaf that takes genuinely values in $\cS$.
	More precisely, $X\B$ is the constant stack associated to the topological space $X\topl$.
	This should not surprise: local systems encode topological (and homotopy theoretic) aspects of the geometry of $X$, while $\cD$-modules encode the complex geometry of $X$.
\end{rem}

Having the formulation \eqref{eq:RH_reformulation_I} at our disposal, two natural questions arise naturally:
\begin{enumerate}
	\item is it possible to recover the full \cref{thm:Deligne} in this language? That is, can we formulate a relative version of \eqref{eq:RH_reformulation_I}?
	\item Can we substitute $\mathbf{AnVect}_n$ with some other analytic stack?
\end{enumerate}
It is easy to answer to the first question.
Namely, it is sufficient to replace the mapping space $\Map_{\mathrm{AnSt}_{\mathbb C}^{\mathrm{cat}}}$ with the (underived) \emph{mapping stack}:
\begin{equation}
	\bfMap_{\mathrm{AnSt}_{\mathbb C}^{\mathrm{cat}}}( X\B, \mathbf{AnVect}_n) \simeq \bfMap_{\mathrm{AnSt}_{\mathbb C}^{\mathrm{cat}}}(X\DR, \mathbf{AnVect}_n) .
\end{equation}
When evaluated on $U \in \Stn$, the above equivalence yields
\[ \Map_{\mathrm{AnSt}_{\mathbb C}^{\mathrm{cat}}}( U \times X\B,  \mathbf{AnVect}_n) \simeq \Map_{\mathrm{AnSt}_{\mathbb C}^{\mathrm{cat}}}( U \times X\DR, \mathbf{AnVect}_n) , \]
and the left hand side can be identified with local systems on $U \times X$ relative to $U$, while $U \times X\DR$ can be identified with vector bundles on $U \times X$ equipped with a $p\inv \cO_U$-linear flat connection.

The second question is more subtle.
Ideally, we would like to replace $\mathbf{AnVect}_n$ with a stack $T$ such as the analytic stack of perfect complexes $\bfAnPerf$, or the stack of $G$-bundles $\rB G$, for some reductive algebraic group $G$.
However, in these cases, it is not so clear how to construct an explicit map
\[ \Map_{\mathrm{AnSt}_{\mathbb C}^{\mathrm{cat}}}( X\B,  T) \to \Map_{\mathrm{AnSt}_{\mathbb C}^{\mathrm{cat}}}(X\DR, T) . \]
The reason is that, classically, the Riemann-Hilbert correspondence is constructed in a rather explicit way: given a vector bundle on $X$ equipped with a flat connection, the associated local system is simply the sheaf of horizontal sections of $X$.
Such a construction would still make sense for $T = \rB G$, but already in the case of $T = \bfAnPerf$ to explicitly construct such a map by hand would involve a lot of hard work with explicit model categories to handle the higher coherences.

Luckily, there is a much better way to proceed.
Indeed, a second key insight of C.\ Simpson is that there exists a natural transformation
\[ \eta_{\mathrm{RH}} \colon X\DR \to X\B \]
that induces the Riemann-Hilbert correspondence.
See \cref{lem:alternative_Betti_form} and the subsequent discussion for the construction of $\eta_{\mathrm{RH}}$ and see \cref{prop:comparison_RH} for a proof that the induced transformation
\[ \eta_{\mathrm{RH}}^* \colon \bfMap_{\mathrm{AnSt}_{\mathbb C}^{\mathrm{cat}}}( X\B, \mathbf{AnVect}_n) \to \bfMap_{\mathrm{AnSt}_{\mathbb C}^{\mathrm{cat}}}(X\DR, \mathbf{AnVect}_n) \]
coincides with the classical Riemann-Hilbert map (at least in the special case where $X\topl$ is weakly contractible).

It is the existence of the map $\eta_{\mathrm{RH}}$ that really opens the door to the generalizations of \cref{thm:Deligne} taken into consideration in this paper, as we are going to see.
The first result we obtain is the following:

\begin{thm-intro}[{cf.\ \cref{cor:RH_perfect_modules_ordinary_base}}]
	Let $X$ be a smooth \canal space.
	There is an equivalence of underived mapping stacks
	\[ \eta_{\mathrm{RH}}^* \colon \bfMap_{\mathrm{AnSt}_{\mathbb C}^{\mathrm{cat}}}( X\B, \bfAnPerf) \simeq \bfMap_{\mathrm{AnSt}_{\mathbb C}^{\mathrm{cat}}}(X\DR, \bfAnPerf) . \]
\end{thm-intro}

\begin{rem}
	When replacing $\mathbf{AnVect}_n$ by $\bfAnPerf$, natural transformations from $X\B$ no longer correspond to representations of the first fundamental groupoid of $X$, but rather to representations of the \emph{full homotopy type} of $X$.
\end{rem}

The second generalization we pursue involves the use of derived \canal geometry.
This framework is available after the foundational works \cite{DAG-IX,Porta_DCAGI,Porta_DCAGII}.
We refer to \cref{sec:review_dan} for a review of the language and of the main results needed in this paper.
Working within derived geometry has many advantages.
Among others, deformation theory becomes richer and more transparent, as the formal moduli problem correspondence \cite{DAG-X} shows.
For this reason, it is natural to ask whether the relative Riemann-Hilbert correspondence stands true when the family is parametrized by some derived analytic space.
The second main result of this paper is to show that the answer is affirmative:

\begin{thm-intro}[{cf.\ \cref{thm:derived_RH}}]
	Let $X$ be a smooth \canal space.
	Then there is an equivalence of (derived) mapping stacks
	\[ \eta_{\mathrm{RH}}^* \colon \bfMap_{\dAnSt^{\mathrm{cat}}}( X\B, \bfAnPerf) \simeq \bfMap_{\dAnSt^{\mathrm{cat}}}(X\DR, \bfAnPerf) . \]
	Moreover, the above equivalence can be promoted to an equivalence of stacks with values in \emph{symmetric monoidal} stable $\infty$-categories.
\end{thm-intro}

\begin{rem}
	A similar use of derived \canal geometry has recently appeared in \cite{DiNatale_Global_Period_2016}, where a derived version of Griffiths' period map has been constructed.
\end{rem}

\bigskip
\paragraph{\textbf{Notations}}

In this paper we use the language of $\infty$-categories.
All functors are $\infty$-functors unless specified.
We reserve the notation $\cS$ for the $\infty$-category of spaces.
Given an $\infty$-category $\cC$, we denote by $\PSh(\cC)$ the $\infty$-category of presheaves with values in $\cS$.
Our reference for $\infty$-category theory is \cite{HTT}.
We make also an extensive use of $\infty$-Grothendieck sites.
For those, we refer to \cite[\S 2.4]{Porta_Yu_Higher_analytic_stacks_2014}.

We denote by $\mathbb A^n_{\mathbb C}$ the algebraic affine space of dimension $n$ and by $\mathbf A^n_{\mathbb C}$ the analytic one.
We denote by $\Stn$ the category of Stein spaces.

\bigskip
\paragraph{\textbf{Acknowledgments.}}

I would like to thank Gabriele Vezzosi, Bertrand To\"en and Carlos Simpson for explaining the importance that the results in this paper has in relation to non-abelian Hodge theory, as well for many inspiring and clarifying discussions.
I would also like to thank Jorge Ant\'onio, Justin Hilburn, Tony Pantev, François Petit and Michel Vaquié for useful discussions.
Finally, I am grateful to Damien Calaque and Marco Robalo for having read a preliminary version of this paper.

This research was partially conducted during the period the author was supported by Simons Foundation grant number 347070 and the group GNSAGA.

\section{Review of derived \canal geometry} \label{sec:review_dan}

Derived \canal geometry plays a central role in this paper.
Since it is a relatively new framework, it is worth review the key facts.
The main references are \cite{DAG-IX,Porta_DCAGI,Porta_DCAGII,Porta_Yu_Representability}.

\begin{defin}
	We let $\cTan$ denote the (ordinary) category whose objects are Stein open subsets of $\mathbb C^n$ and whose morphisms are holomorphic functions.
	We endow $\cTan$ with the analytic topology, denoted $\tauan$.
	A morphism in $\cTan$ is said to be \emph{admissible} if it is an open immersion.
\end{defin}

\begin{defin}
	Let $\cX$ be an $\infty$-topos.
	A \emph{$\cTan$-structure} (also called an \emph{analytic ring}) is a functor
	\[ \cO \colon \cTan \to \cX \]
	such that:
	\begin{enumerate}
		\item $\cO$ commutes with products and pullbacks along admissible morphisms;
		\item $\cO$ takes $\tauan$-covers to effective epimorphisms in $\cX$.\footnote{A morphism $f \colon X \to Y$ in $\cX$ is an effective epimorphism if $\pi_0(f)$ is an epimorphism in the $1$-topos $\tau_{\le 0} \cX$.}
	\end{enumerate}
	The $\infty$-category of $\cTan$-structures in $\cX$ is denoted $\mathrm{Str}_{\cTan}(\cX)$.
	A morphism $f \colon \cO \to \cO'$ of $\cTan$-structures is said to be \emph{local} if for every admissible morphism $\varphi \colon U \to V$ in $\cTan$, the square
	\[ \begin{tikzcd}
		\cO(U) \arrow{r} \arrow{d} & \cO(V) \arrow{d} \\
		\cO'(U) \arrow{r} & \cO'(V)
	\end{tikzcd} \]
	is a pullback square.
	The $\infty$-category of $\cTan$-structures and local morphisms between them is denoted $\mathrm{AnRing}(\cX)$, and we refer to it as the $\infty$-category of analytic rings in $\cX$.
\end{defin}

\begin{eg} \label{eg:simple_analytically_structured_topos}
	Let $X$ be a \canal space and let $X\topl$ denote the underlying topological space of $X$.
	Let $\cX \coloneqq \Sh(X\topl)$ be the $\infty$-topos of sheaves on $X\topl$.
	We can define an analytic ring $\cO$ on $\cX$ as follows:
	\[ \cO \colon \cTan \to \cX \]
	is by definition the functor that sends $U \in \cTan$ to the sheaf $\cO(U)$ on $X\topl$ defined by
	\[ \mathrm{Op}(X\topl) \ni V \mapsto \cO(U)(V) \coloneqq \Hom_{\An}(V, U) \in \rSet , \]
	where $\mathrm{Op}(X\topl)$ denotes the lattice of open subsets of $X\topl$.
	Notice that $\cO(\mathbf A^1_{\mathbb C})$ coincides with the sheaf of holomorphic functions on $X$.
\end{eg}

\begin{rem}
	Let $\cX = \cS$ be the $\infty$-category of spaces and let $A \in \mathrm{AnRing}_{\mathbb C} \coloneqq \mathrm{AnRing}(\cS)$.
	Evaluating $A$ on $\mathbf A^1$ produces a \emph{simplicial commutative $\mathbb C$-algebra}.
	This simplicial commutative algebra is denoted $A\alg$.
	This operation gives a simple way of understanding analytic rings: they simply are simplicial commutative algebras equipped with an extra structure.
	This extra structure amounts to a number of additional operations that are allowed on the ring - one for every holomorphic function $f \colon U \to \mathbb C$, where $U \subset \mathbb C^n$. For example, we can take the exponential of an element in $A$, or if $U \subseteq \mathbb C \smallsetminus \{0\}$ is a simply connected open subset and $x \in A(U)$, we can evaluate a branch of the complex logarithm on $x$.
	This extra structure plays the role of a topology on $A$ (one that would make $A$ into a Banach or Fréchet algebra), but it is easier to deal with at the categorical level, and it is well suited for the derived setting.
\end{rem}

\begin{defin}
	A \emph{derived \canal space} is a pair $(\cX, \cO_X)$, where $\cX$ is an $\infty$-topos and $\cO_X$ is an analytic ring in $\cX$, such that:
	\begin{enumerate}
		\item locally on $\cX$,\footnote{Formally speaking, this means that there exists a collection of objects $\{U_i\}$ of $\cX$ such that $\coprod U_i \to \mathbf 1_\cX$ is an effective epimorphism and such that the structured topos $(\cX_{/U_i}, \cO_X |_{U_i})$ satisfies the given condition.} $(\cX, \pi_0 \cO_X)$ is equivalent to a structured topos arising from the construction of \cref{eg:simple_analytically_structured_topos};
		\item the sheaves $\pi_i(\cO_X\alg)$ are coherent as sheaves of $\pi_0(\cO_X\alg)$-modules.
	\end{enumerate}
\end{defin}

\begin{rem}
	The reader that is unfamiliar with the language of $\infty$-topoi can simply think in terms of topological spaces. The above definition is meant to capture more than ordinary \canal spaces: it gives a way to deal uniformly with \canal spaces \emph{and} \canal orbifolds.
\end{rem}

This is the setup for derived \canal geometry. I collect below the results obtained so far in this context that are relevant for the current paper:

\begin{thm}[{\cite[\S 12]{DAG-IX}}]
	Derived \canal spaces assemble in an $\infty$-category $\dAn_{\mathbb C}$ that enjoys the following property:
	\begin{enumerate}
		\item fiber products in $\dAn_{\mathbb C}$ exists;
		\item the category of (ordinary) \canal spaces embeds fully faithfully in $\dAn_{\mathbb C}$, the embedding given on objects by the construction performed in \cref{eg:simple_analytically_structured_topos}.
	\end{enumerate}
\end{thm}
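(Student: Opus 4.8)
The plan is to deduce both statements from the general theory of structured \inftopoi, realizing $\dAn_\C$ as a full subcategory of the ambient \infcat $\fT$ whose objects are pairs $(\cX, \cO_X)$ with $\cX$ an \inftopos and $\cO_X$ an analytic ring in $\cX$, and whose morphisms $(\cX, \cO_X) \to (\cY, \cO_Y)$ consist of a geometric morphism $f \colon \cX \to \cY$ together with a \emph{local} morphism $f^* \cO_Y \to \cO_X$ in $\AnRing(\cX)$. By definition a derived \canal space is exactly an object of $\fT$ satisfying the two local conditions above, so $\dAn_\C \subseteq \fT$ is the corresponding full subcategory. The two assertions then become: (i) $\fT$ admits fiber products and the subcategory $\dAn_\C$ is closed under them; and (ii) the functor $\An \to \fT$ of \cref{eg:simple_analytically_structured_topos} is fully faithful.

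For (i), given a cospan $(\cX, \cO_X) \to (\cZ, \cO_Z) \leftarrow (\cY, \cO_Y)$ I would first form the underlying \inftopos $\cW \coloneqq \cX \times_\cZ \cY$, which exists since $\Top$ admits small limits \cite{HTT}. Writing $g, h, k$ for the resulting projections out of $\cW$, I would then set $\cO_W \coloneqq g^* \cO_X \otimes_{k^* \cO_Z} h^* \cO_Y$, the relative coproduct in $\AnRing(\cW)$; a direct check of the universal property identifies $(\cW, \cO_W)$ with the fiber product in $\fT$. The content is to show that $(\cW, \cO_W)$ lands in $\dAn_\C$ when the three inputs do. Since the construction is local on source, target and base, one reduces to affine Stein models $X = \Spec^{\cTan} A$, $Y = \Spec^{\cTan} B$, $Z = \Spec^{\cTan} C$, where $\cO_W$ is recovered from the pushout $A \otimes_C B$ of analytic rings and the analytic spectrum $\Spec^{\cTan}$ converts this pushout into the sought fiber product of affine derived \canal spaces \cite{DAG-IX, Porta_DCAGI}.

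I expect the technical heart of (i) to be the verification of condition (2) in the definition of a derived \canal space, namely that the homotopy sheaves $\pi_i(\cO_W\alg)$ stay coherent over $\pi_0(\cO_W\alg)$. After the affine reduction this is a coherence statement for the derived tensor product $A\alg \otimes_{C\alg} B\alg$ of the underlying simplicial commutative algebras; controlling the higher $\pi_i$ — as opposed to $\pi_0$, whose coherence is the classical existence of fiber products of ordinary analytic spaces — requires the analytic analogue of Oka's coherence theorem together with the finiteness of the relative $\Tor$ sheaves, and is the genuinely derived-analytic input.

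For (ii), I would compute $\Map_\fT(X, Y)$ for ordinary \canal spaces $X, Y$. It fibers over the space of geometric morphisms $\Sh(X\topl) \to \Sh(Y\topl)$; as \canal spaces are locally compact Hausdorff, hence sober, this space is discrete and identifies with the set of continuous maps $X\topl \to Y\topl$ \cite{HTT}. Over a fixed such map $f$, the remaining datum is a local morphism $f^* \cO_Y \to \cO_X$ of analytic rings, and the locality condition — that the structural squares attached to open immersions in $\cTan$ are pullbacks — forces this morphism to exist and be unique exactly when $f$ is holomorphic, since the analytic ring records the germs of holomorphic functions and all analytic operations on them. Assembling the two identifications gives $\Map_\fT(X, Y) \simeq \Hom_{\An}(X, Y)$, and since $\dAn_\C$ is full in $\fT$ this is the asserted full faithfulness. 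The subtle point here is the second half: extracting holomorphicity from the mere existence of a local morphism of $\cTan$-structures, which is where the specific geometry of the pregeometry $\cTan$ — rather than formal topos theory — is used.
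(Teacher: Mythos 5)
First, note that the paper offers no proof of this statement: it is imported verbatim from \cite[\S 12]{DAG-IX}, so there is no internal argument to compare against, and your proposal has to be measured against Lurie's actual construction.

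The main gap is in part (i). You define $\fT$ with \emph{local} morphisms of analytic rings (correctly, since that is what a morphism of derived \canal spaces is), but your candidate fiber product --- underlying topos $\cX \times_{\cZ} \cY$ with structure sheaf the pushout $g^*\cO_X \otimes_{k^*\cO_Z} h^*\cO_Y$ --- is the fiber product in the category of structured topoi with \emph{arbitrary} morphisms of $\cTan$-structures. Once locality is imposed, the ``direct check of the universal property'' breaks down: the structural maps into a pushout of analytic rings need not be local, pushouts computed in the subcategory of local morphisms are not the naive ones, and they may fail to exist in the expected form at all (this is exactly the locally-ringed-space phenomenon, where fiber products exist but are not the naive ringed-space construction). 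The paper itself signals this: \cref{thm:unramifiedness}(1) asserts the tensor-product description of the structure sheaf of a pullback \emph{only along closed immersions}, and records it as a nontrivial consequence of the unramifiedness of $\cTan$ --- it is a theorem about a fiber product constructed by other means, not a definition of the fiber product. Lurie's actual argument in \cite[\S 12]{DAG-IX} reduces a general fiber product, via local closed embeddings of $X$, $Y$, $Z$ into open subsets of $\mathbf A^n_{\mathbb C}$, to pullbacks along closed immersions and products with smooth models, and then invokes the unramifiedness theorem of \cite[\S 11]{DAG-IX}; your ``technical heart'' (coherence of the $\pi_i$ of the resulting structure sheaf) is real but is downstream of having the correct construction, and in Lurie's approach it is absorbed into the closed-immersion case.

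Part (ii) has the right shape (identify geometric morphisms of $0$-localic topoi with continuous maps, then analyze the fiber of local $\cTan$-structure morphisms), but the step you yourself flag as subtle --- that a local morphism $f^{-1}\cO_Y \to \cO_X$ of $\cTan$-structures over a continuous $f$ exists, is unique, \emph{and has contractible space of choices} precisely when $f$ is holomorphic --- is the entire content of the statement and is asserted rather than proved. In particular full faithfulness is a statement about mapping \emph{spaces}, so you also need that the space of local morphisms between discrete $\cTan$-structures is discrete; neither point follows from formal topos theory, and both are where \cite[\S 12]{DAG-IX} does the work.
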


\begin{thm}[{\cite{Porta_DCAGII}}] \label{thm:equivalence_modules}
	Let $X = (\cX, \cO_X)$ be a derived \canal space.
	There is a canonical equivalence
	\[ \Sp(\AnRing(\cX)_{/\cO_X}) \simeq \cO_X\alg \Mod . \]
\end{thm}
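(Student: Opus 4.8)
The plan is to recognise the left-hand side as the tangent $\infty$-category of $\AnRing(\cX)$ at the point $\cO_X$ and then to reduce its computation to the already-understood algebraic case along the algebraization functor. Recall that for any presentable $\infty$-category $\cC$ and any object $x$, the stabilisation $\Sp(\cC_{/x})$ of the pointed slice (based at $\mathrm{id}_x$) is the tangent category $T_x\cC$, whose objects are best described as compatible systems of square-zero extensions of $x$. For simplicial commutative rings this tangent category is known, via the cotangent complex formalism, to recover modules: for a simplicial commutative $\mathbb C$-algebra $B$ in $\cX$ one has a canonical equivalence $\Sp(\mathrm{SCR}(\cX)_{/B}) \simeq B\Mod$, where $\mathrm{SCR}(\cX)$ denotes simplicial commutative $\mathbb C$-algebras in $\cX$ (equivalently $\cT_{\mathrm{disc}}$-structures), the relative cotangent complex furnishing the universal square-zero extension. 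The forgetful functor $(-)\alg \colon \AnRing(\cX) \to \mathrm{SCR}(\cX)$, $A \mapsto A(\mathbf A^1_{\mathbb C})$, is a functor between presentable $\infty$-categories which preserves small limits and sifted colimits and admits a left adjoint; hence it induces a functor on tangent categories
\[ \Sp(\AnRing(\cX)_{/\cO_X}) \longrightarrow \Sp(\mathrm{SCR}(\cX)_{/\cO_X\alg}) \simeq \cO_X\alg\Mod , \]
and the whole content of the statement is that this functor is an equivalence.

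To prove it I would argue that the two stabilisations agree because the additional holomorphic operations contribute nothing \emph{linear}. An object of the heart of $\Sp(\cC_{/x})$ is an abelian group object of $\cC_{/x}$, equivalently a split square-zero extension $x \oplus M \to x$, and a functor between presentable tangent categories is an equivalence as soon as it is an equivalence on these square-zero data, compatibly with the loop functor. So it suffices to show that a split square-zero extension of the analytic ring $\cO_X$ by an object $M$ of $\cX$ is the same datum as an $\cO_X\alg$-module structure on $M$, naturally in $M$.

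This is where the analytic and algebraic theories collapse into one another. A $\cTan$-structure is a product- and admissible-pullback-preserving functor $\cTan \to \cX$, so on the square-zero extension its value at a Stein open $U \subseteq \mathbb C^n$ is forced, to first order in $M$, to be $\cO_X(U) \oplus (M \otimes_{\mathbb C} \mathbb C^n)$, and a holomorphic function $f \colon U \to \mathbb C$, viewed as a morphism into $\cO(\mathbf A^1_{\mathbb C})$, must act by its first-order Taylor expansion $f(a+m) = f(a) + \sum_i \partial_i f(a)\, m_i$. Thus every holomorphic operation acts on the module part $M$ through the elements $\partial_i f(a) \in \cO_X\alg$, that is, purely $\cO_X\alg$-linearly; conversely any $\cO_X\alg$-module structure on $M$ defines such an extension, and the chain rule $\partial_i(g \circ f) = \sum_j (\partial_j g)\, \partial_i f_j$ is exactly the coherence that makes these first-order expansions respect composition in $\cTan$. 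In other words, only the derivative of each holomorphic operation survives the passage to the stabilisation, and derivatives are $\mathbb C$-linear, hence already captured by the algebra $\cO_X\alg$.

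The main obstacle is to turn this Taylor-expansion heuristic into a genuine equivalence of $\infty$-categories carrying all higher coherences. Concretely I would realise $\mathrm{Str}_{\cTan}(\cX)$ as algebras for the sifted-colimit-preserving monad on $\cX$ generated by $\cTan$, identify the stabilisation of the slice of a monadic $\infty$-category with modules over the linearised monad, and then compute that the linearisation of the analytic monad at $\cO_X$ is, via the derivative, precisely the module monad of $\cO_X\alg$; comparison with the cited result $\Sp(\mathrm{SCR}(\cX)_{/B}) \simeq B\Mod$ then closes the argument formally. Two points require care: first, that the locality condition cutting out $\AnRing(\cX)$ inside $\mathrm{Str}_{\cTan}(\cX)$ is immaterial for the stabilisation, since every abelian group object of the slice over $\cO_X$ has structure maps that are automatically local (being identities on $\pi_0$); and second, that the derivative genuinely linearises the \emph{full} analytic monad, not merely its underlying polynomial part, which is the one place where the specific geometry of $\cTan$ — rather than formal nonsense — actually enters.
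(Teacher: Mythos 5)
First, a structural point: the paper does not prove this statement at all --- it is recalled, with attribution, from \cite{Porta_DCAGII} --- so there is no internal argument to compare yours against, and I can only assess the proposal on its own terms. Your overall strategy is the right one and matches the cited reference in spirit: transport the problem along $(-)\alg$ to the known equivalence $\Sp(\mathrm{SCR}(\cX)_{/B}) \simeq B\Mod$ for simplicial commutative rings, and argue that the analytic operations on a square-zero extension are forced to act through their derivatives, hence $\cO_X\alg$-linearly. Your observation that locality of the projection $\cO_X \oplus M \to \cO_X$, together with preservation of products and of pullbacks along admissible morphisms, pins down the value of the extension on every Stein open $U \subseteq \mathbb C^n$ to be $\cO_X(U) \oplus M^{\oplus n}$, and that the chain rule supplies functoriality, correctly identifies the one place where the geometry of $\cTan$ genuinely enters.

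There is, however, a real gap in the execution. The reduction ``a functor between tangent categories is an equivalence as soon as it is an equivalence on abelian group objects, compatibly with $\Omega$'' is not valid: abelian group objects of the slice only control the heart of a putative $t$-structure on $\Sp(\cC_{/x})$, and a stable $\infty$-category is not recovered from its heart without completeness and boundedness input --- which, for the left-hand side, is precisely part of what must be established. Likewise, the step that would actually close the argument --- that the linearisation of the analytic monad at $\cO_X$ is the $\cO_X\alg$-module monad, with all higher coherences --- is asserted rather than proved, and it \emph{is} the content of the theorem; the monadic framing also has to contend with the fact that locality is a non-full condition on $\mathrm{Str}_{\cTan}(\cX)$ (your claim that morphisms of square-zero extensions are automatically local is essentially correct, but the justification should be that locality is detected on $\pi_0$ and that square-zero extensions of local analytic rings are local, not that the maps are ``identities on $\pi_0$'', which fails for merely connective $M$). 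The workable completion of your outline is to construct the inverse functor explicitly: show that $M \mapsto \cO_X \oplus M$, defined on Stein opens by the forced formula above, is a limit-preserving functor from connective $\cO_X\alg$-modules to $\AnRing(\cX)_{/\cO_X}$, pass to spectrum objects, and verify that the two composites with $(-)\alg$ are equivalences using conservativity of evaluation at $\mathbf A^1_{\mathbb C}$. As it stands, your argument is a correct and well-motivated sketch whose crux is left unproved.
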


\begin{rem}
	The stable $\infty$-category $\cO_X\alg \Mod$ has a canonical $t$-structure. We refer to \cite[Proposition 1.7]{DAG-VII} for the construction.
\end{rem}

\begin{defin}
	Let $X = (\cX, \cO_X)$ be a derived \canal space.
	We let $\Coh^-(X)$ be the full stable subcategory of $\cO_X\alg \Mod$ spanned by those sheaves $\cF$ such that $\rH^i(\cF)$ is a coherent sheaf of $\pi_0(\cO_X\alg)$-modules for every $i$ and $\rH^i(\cF) = 0$ for $i \gg 0$.
	We refer to $\Coh^-(X)$ as the $\infty$-category of \emph{almost perfect modules} over $X$.
	The $\infty$-category of \emph{perfect modules} $\Perf(X)$ is the full subcategory of $\Coh^-(X)$ spanned by those object that, locally on $X$, belong to the smallest stable full subcategory of $\Coh^-(X)$ closed under retracts and containing $\cO_X\alg$.
\end{defin}

\begin{rem}
	Both assignments $X \mapsto \Coh^-(X)$ and $X \mapsto \Perf(X)$ can be promoted to sheaves with values in (stable) $\infty$-categories.
	We denote such stacks by
	\[ \bfAnCoh^-, \bfAnPerf \colon \dStn\op \to \Cat_\infty . \]
	One can prove the deduce the descent property of $\bfAnPerf$ out of the one for $\bfAnCoh^-$.
	For the latter, the strategy of reduction to the heart (see \cite[\S 6]{DAG-VII}) applies in the analytic setting as well.
	It can be shown that $\bfAnPerf$ is the analytification of the algebraic stack of perfect complexes $\bfAnPerf$ (as stack in symmetric monoidal stable $\infty$-categories).
	This is a consequence of the GAGA theorems of \cite{Porta_DCAGI}, and the proofs will appear in \cite{DiNatale_Analytification_mapping_stacks}.
	In particular, it follows from \cite{Toen_Moduli} and \cite[Proposition 2.25]{Porta_Yu_Higher_analytic_stacks_2014} that the associated stacks in $\infty$-groupoids is locally geometric.
\end{rem}

The importance of \cref{thm:equivalence_modules} is that it allows to study derived deformation theory in the \canal setting:

\begin{defin}
	Let $X = (\cX, \cO_X)$ be a derived \canal space and let $\cF \in \Coh^{\le 0}(X)$.
	The \emph{split square-zero extension of $\cO_X$ by $\cF$} is the analytic ring
	\[ \cO_X \oplus \cF \coloneqq \Omega^\infty(\cF) \in \AnRing(\cX)_{/\cO_X} . \]
	The \emph{space of (derived) derivations of $\cO_X$ with values in $\cF$} is by definition the space
	\[ \mathrm{Der}\an(\cO_X; \cF) \coloneqq \Map_{\AnRing(\cX)_{/\cO_X}}( \cO_X, \cO_X \oplus \cF ) . \]
	Let $d \colon \cO_X \to \cO_X \oplus \cF$ be a derived derivation.
	The \emph{analytic square-zero extension associated to $d$} is by definition the pullback
	\[ \begin{tikzcd}
		\cO_X \oplus_d \cF \arrow{r} \arrow{d} & \cO_X \arrow{d}{d} \\
		\cO_X \arrow{r}{d_0} & \cO_X \oplus \cF ,
	\end{tikzcd} \]
	where $d_0$ denotes the zero derivation.
	An \emph{analytic square-zero extension} of $\cO_X$ is a morphism of analytic rings $\cA \to \cO_X$ that arises by the above construction.
\end{defin}

The following theorem summarizes the main properties of derived analytic deformation theory:

\begin{thm}[{\cite{Porta_Yu_Representability}}]
	Let $X = (\cX, \cO_X)$ be a derived \canal space.
	\begin{enumerate}
		\item There exists an \emph{analytic cotangent complex} $\mathbb L_X\an$ that classifies analytic derivations.
		\item There exists a \emph{relative analytic cotangent complex} attached to a map $f \colon X \to Y$ of derived \canal spaces. A sequence of maps $X \xrightarrow{f} Y \xrightarrow{g} Z$ gives rise to a fiber sequence
		\[ f^* \mathbb L\an_{Y/Z} \to \mathbb L\an_{X/Z} \to \mathbb L\an_{X / Y} . \]
		\item The morphisms $\tau_{\le n+1} \cO_X \to \tau_{\le n} \cO_X$ are analytic square-zero extensions.
		\item Let $\cF \in \Coh^{\le -1}(X)$ and let $d \colon \cO_X \to \cO_X \oplus \cF$ be an analytic derivation.
		Then the pair $(\cX, \cO_X \oplus_d \cF)$ is a derived \canal space, denoted $X_d[\cF]$.
	\end{enumerate}
\end{thm}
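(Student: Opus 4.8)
The plan is to transport Lurie's abstract cotangent-complex formalism into the analytic world, using \cref{thm:equivalence_modules} as the bridge. The crucial point is that $\AnRing(\cX)$ is a presentable \infcat and that, by \cref{thm:equivalence_modules}, the stabilized slice $\Sp(\AnRing(\cX)_{/\cO_X})$ is identified with $\cO_X\alg\Mod$; moreover, under this identification the functor $\Omega^\infty$ sends $\cF$ to the split square-zero extension $\cO_X \oplus \cF$ of the preceding definition. Assembling these fibers over $\AnRing(\cX)$ produces a tangent bundle whose fiber at $\cO_X$ is $\cO_X\alg\Mod$, and the entire theorem becomes a matter of feeding the analytic input into this machine.

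For (1), I would observe that, by definition, $\mathrm{Der}\an(\cO_X; \cF) \simeq \Map_{\AnRing(\cX)_{/\cO_X}}(\cO_X, \Omega^\infty \cF)$ is a functor $\cO_X\alg\Mod \to \cS$ that preserves limits and is accessible; since $\cO_X\alg\Mod$ is presentable, the adjoint functor theorem guarantees it is corepresentable, and I would \emph{define} $\mathbb L_X\an \in \cO_X\alg\Mod$ to be a corepresenting object. I want to stress that, although $\mathbb L_X\an$ lives in the \emph{algebraic} module category $\cO_X\alg\Mod$, it is genuinely different from (and much smaller than) the algebraic cotangent complex of the ring $\cO_X\alg$: the corepresented functor is the space of \emph{analytic} derivations, which only records the operations coming from the $\cTan$-structure. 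Part (2) is then formal. Setting $\mathbb L\an_{X/Y} \coloneqq \cofib(f^* \mathbb L\an_Y \to \mathbb L\an_X)$, where $f^*$ is pullback of modules along the induced map $\cO_Y\alg \to \cO_X\alg$, the transitivity sequence for $X \xrightarrow{f} Y \xrightarrow{g} Z$ follows by applying the octahedral axiom to the composite $f^* g^* \mathbb L\an_Z \to f^* \mathbb L\an_Y \to \mathbb L\an_X$, exactly as in the algebraic case.

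The hard part is (3). The corresponding statement for simplicial commutative rings is Lurie's theorem that $\tau_{\le n+1} A \to \tau_{\le n} A$ is a square-zero extension of $\tau_{\le n} A$ by $\pi_{n+1}(A)[n+2]$; applied to $A = \cO_X\alg$ it produces the desired structure on the \emph{underlying} algebras. The real work is to lift this to the analytic level: I would use corepresentability from (1) to produce an analytic derivation $d \colon \tau_{\le n}\cO_X \to \tau_{\le n}\cO_X \oplus \pi_{n+1}(\cO_X\alg)[n+2]$ whose associated analytic square-zero extension reconstructs $\tau_{\le n+1}\cO_X$, and then check that the forgetful functor $(-)\alg$ carries $d$ to the algebraic derivation just obtained. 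This step relies on the compatibility of truncations with $(-)\alg$, i.e.\ on $(\tau_{\le n}\cO_X)\alg \simeq \tau_{\le n}(\cO_X\alg)$, and is where the genuinely analytic content of the theorem resides; the coherence hypothesis guarantees that $\pi_{n+1}(\cO_X\alg)$ is a coherent $\pi_0(\cO_X\alg)$-module, so that the relevant module lies in $\Coh^{\le -1}(X)$.

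Finally, (4) is a direct verification that $(\cX, \cO_X \oplus_d \cF)$ is a derived \canal space. Since $\cF \in \Coh^{\le -1}(X)$, the fiber sequence $\cF[-1] \to \cO_X \oplus_d \cF \to \cO_X$ has connective fiber, so $\cO_X \oplus_d \cF$ is connective with $\pi_0(\cO_X \oplus_d \cF) \simeq \pi_0(\cO_X)$; hence the first defining condition, which constrains only $\pi_0$, holds because it already holds for $\cO_X$. For the second condition, the long exact sequence of homotopy sheaves attached to the fiber sequence exhibits each $\pi_i(\cO_X \oplus_d \cF)$ as an extension of homotopy sheaves of $\cO_X\alg$ and of $\cF$, all of which are coherent by hypothesis; since coherent sheaves are stable under extensions, every $\pi_i(\cO_X \oplus_d \cF)$ is coherent, and the verification is complete.
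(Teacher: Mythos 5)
First, note that the paper does not prove this statement at all: it is quoted as background from \cite{Porta_Yu_Representability} (building on \cite{DAG-IX}), so there is no internal proof to compare against. Judged on its own terms, your treatment of (1) and (2) is sound and is indeed how the cited sources proceed: $\mathrm{Der}\an(\cO_X;-)$ is an accessible limit-preserving functor on the presentable stable \infcat $\cO_X\alg\Mod$ (via \cref{thm:equivalence_modules}), hence corepresentable, and the transitivity sequence is formal from stability once $\mathbb L\an_{X/Y}$ is defined as a cofiber. Your emphasis that $\mathbb L\an_X$ is \emph{not} the algebraic cotangent complex of $\cO_X\alg$ is exactly the right point. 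The problem is in (3): you correctly flag the lifting step as ``where the genuinely analytic content resides,'' but you do not supply the mechanism, and the direction of the comparison is against you. The forgetful functor gives a map from analytic derivations to algebraic ones, i.e.\ a morphism $\mathbb L^{\mathrm{alg}}_{\cO_X\alg} \to \mathbb L\an_X$ of $\cO_X\alg$-modules; Lurie's theorem hands you a point of the \emph{algebraic} derivation space classifying $\tau_{\le n+1}\cO_X\alg \to \tau_{\le n}\cO_X\alg$, and there is no a priori reason it lifts to an analytic derivation. The actual argument in the cited reference rests on a structural fact about $\cTan$ (its compatibility with truncations and square-zero extensions, ultimately its unramifiedness), which yields a criterion of the form: a map of analytic rings is an analytic square-zero extension if and only if its underlying map of algebras is one. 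Without proving something of that nature, step (3) of your sketch does not close.

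There is also a genuine error in (4). From the defining pullback one gets a fiber sequence of underlying modules $\cF[-1] \to (\cO_X \oplus_d \cF)\alg \to \cO_X\alg$, and for $\cF \in \Coh^{\le -1}(X)$ the fiber $\cF[-1]$ is merely connective, not $1$-connective: its $\pi_0$ is $\rH^{-1}(\cF)$, which need not vanish. Consequently $\pi_0(\cO_X \oplus_d \cF)$ is in general a classical square-zero extension of $\pi_0(\cO_X)$ by the coherent sheaf $\rH^{-1}(\cF)$, not $\pi_0(\cO_X)$ itself --- this is precisely how the infinitesimal neighbourhoods $\Delta_X^{(n)}$ of \cref{lem:De_Rham_analytic} are produced, and it is why the closed-immersion example later in the section requires the stronger hypothesis $\cF \in \Coh^{\le -2}$. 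So your justification of the first condition in the definition of a derived \canal space (``the first defining condition \ldots holds because it already holds for $\cO_X$'') fails; what must actually be shown is that a classical square-zero extension of an ordinary \canal space by a coherent sheaf is again (locally) an ordinary \canal space. Your verification of the second condition, via the long exact sequence and stability of coherence under extensions, is correct.
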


Before passing to the main body of this paper, there is one last important idea that needs to be discussed: the unramifiedness of $\cTan$.
While \emph{per se} this is a technical notion (we refer to \cite[\S 1]{DAG-IX} for a thorough discussion), it has a number of important consequences that are easy to formulate and that are extremely useful in practice.
These properties are related to the notion of closed immersion, so let us recall it here:

\begin{defin}
	A morphism $f \colon X = (\cX, \cO_X) \to Y = (\cY, \cO_Y)$ of derived \canal spaces is said to be a \emph{closed immersion} if:
	\begin{enumerate}
		\item the underlying morphism of $\infty$-topoi $f\inv \colon \cY \leftrightarrows \cX \colon f_*$ is a closed immersion of $\infty$-topoi;\footnote{The reader unfamiliar with the language of ($\infty$-)topoi may think in terms of topological spaces. Its intuition will be correct. The curious reader is referred to \cite[\S 7.3.2]{HTT}.}
		\item the induced morphism $f\inv \cO_Y \to \cO_X$ is an effective epimorphism, in the sense that for any $U \in \cTan$ the morphism $f\inv \cO_Y(U) \to \cO_X(U)$ is an effective epimorphism in $\cX$.
	\end{enumerate}
\end{defin}

\begin{eg}
	Let $X = (\cX, \cO_X)$ be a derived \canal space and let $\cF \in \Coh^{\le -2}(\cF)$.
	Let $d \colon \cO_X \to \cO_X \oplus \cF$ be an analytic derivation.
	The canonical morphism $X \to X_d[\cF]$ is a closed immersion.
\end{eg}

\begin{thm}[{\cite[\S 11]{DAG-IX}}] \label{thm:unramifiedness}
	Let
	\[ \begin{tikzcd}
		X' \arrow[hook]{r}{i} \arrow{d}{g} & Y' \arrow{d}{f} \\
		X \arrow[hook]{r}{j} & Y
	\end{tikzcd} \]
	be a pullback diagram in $\dAn_{\mathbb C}$, where $j$ is a closed immersion.
	Then:
	\begin{enumerate}
		\item the underlying algebra of the structure sheaf of $X'$ can be computed as the tensor product
		\[ \cO_{Y'} \otimes_{f\inv \cO_Y} f\inv j_* \cO_X . \]
		\item For any $\cF \in \cO_X \Mod$, the natural transformation
		\[ f^* j_*(\cF) \to i_* g^*(\cF) \]
		is an equivalence.
	\end{enumerate}
\end{thm}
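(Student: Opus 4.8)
The plan is to prove the first assertion and to deduce the second from it, since at the level of modules everything will reduce, via \cref{thm:equivalence_modules}, to a statement in ordinary (derived) commutative algebra. Both claims are local on the $\infty$-topos $\cY'$, so I would first reduce to the situation in which $\cX$, $\cY$ and $\cY'$ are sheaves on a point, i.e.\ in which the structure sheaves are honest analytic rings in $\cS$ and the module categories are described by \cref{thm:equivalence_modules}. Because the square is a pullback of structured $\infty$-topoi, its underlying $\infty$-topos is the fibre product $\cX' \simeq \cX \times_{\cY} \cY'$; and since $j$ is a closed immersion of $\infty$-topoi and these are stable under pullback (see \cite[\S 7.3.2]{HTT}), the map $i \colon \cX' \to \cY'$ is again a closed immersion. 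In particular $i_*$ is fully faithful, so I may regard $i_* \cO_{X'}$ as a sheaf on $\cY'$, which is where the two sides of assertion (1) are to be compared.

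For assertion (1), the universal property of the fibre product in $\cTan$-structured topoi identifies $\cO_{X'}$ with a pushout of local $\cTan$-structures on $\cX'$: writing $p \colon X' \to Y$ for the composite $f \circ i = j \circ g$, one has
\[ \cO_{X'} \simeq g^* \cO_X \coprod_{p^* \cO_Y} i^* \cO_{Y'} , \]
where $(-)^*$ denotes pullback of local structures. The whole content of the statement is that the underlying-algebra functor $A \mapsto A\alg$ carries this pushout to the \emph{algebraic} tensor product $\cO_{Y'}\alg \otimes_{f\inv \cO_Y\alg} f\inv j_* \cO_X\alg$. This is false for a generic pushout of analytic rings, whose coproduct involves the transcendental operations of $\cTan$ (such as $\exp$ and branches of $\log$), but it becomes true along a closed immersion, and this is exactly where I would invoke the unramifiedness of $\cTan$ from \cite[\S 1]{DAG-IX}. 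Concretely, I would present the closed immersion as a limit of its square-zero approximations: using the deformation theory of \cite{Porta_Yu_Representability}, in particular that each Postnikov stage $\tau_{\le n+1} \cO_X \to \tau_{\le n} \cO_X$ is an analytic square-zero extension, I reduce to the case of a single analytic square-zero extension $\cO_X \oplus_d \cF$. Unramifiedness then guarantees that such an extension is simultaneously an \emph{algebraic} square-zero extension with the same underlying data, so that forming it commutes both with $(-)\alg$ and with base change along $f$, where the computation is the standard algebraic one. Assembling the stages requires controlling the limit over the Postnikov tower, and here the coherence hypotheses in the definition of a derived \canal space (and the connectivity bounds they yield) are used to ensure convergence.

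Granting (1), assertion (2) is formal. By \cref{thm:equivalence_modules} the functors $f^*$, $g^*$, $j_*$, $i_*$ on modules are the ordinary algebraic ones, and by base change for closed immersions of $\infty$-topoi (\cite[\S 7.3]{HTT}) one has $f\inv j_* \simeq i_* g\inv$ on underlying sheaves. For $\cF \in \cO_X \Mod$ I would then compute
\begin{align*}
 f^* j_* \cF &\simeq f\inv j_* \cF \otimes_{f\inv \cO_Y\alg} \cO_{Y'}\alg \\
 &\simeq i_* g\inv \cF \otimes_{f\inv j_* \cO_X\alg} \big( f\inv j_* \cO_X\alg \otimes_{f\inv \cO_Y\alg} \cO_{Y'}\alg \big) ,
\end{align*}
where the parenthesized factor is identified with $i_* \cO_{X'}\alg$ by (1). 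Applying the projection formula for the fully faithful $i_*$, together with $i\inv f\inv j_* \cO_X\alg \simeq g\inv j\inv j_* \cO_X\alg \simeq g\inv \cO_X\alg$ (using $j\inv j_* \simeq \id$ and commutativity of the square), rewrites the right-hand side as $i_*\big( g\inv \cF \otimes_{g\inv \cO_X\alg} \cO_{X'}\alg \big) = i_* g^* \cF$, as required. The derived tensor products carry no Tor corrections beyond those already encoded in (1), so no further hypotheses are needed.

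The main obstacle is thus the proof of (1), and within it the reduction to a single analytic square-zero extension together with the verification that unramifiedness makes such an extension algebraic. Everything else---the topos-theoretic facts about closed immersions, the reduction to the local case, and the formal deduction of (2)---is routine; the genuine work, carried out in \cite[\S\S 1, 11]{DAG-IX}, is to show that the transcendental operations of $\cTan$ never obstruct this identification, so that deformation theory along a closed immersion is governed entirely by ordinary commutative algebra.
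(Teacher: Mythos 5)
The paper does not prove this statement at all: it is imported verbatim from \cite[\S 11]{DAG-IX}, where it is a consequence of the general theory of unramified pregeometries developed in \S 10 of that work, applied to $\cTan$. So there is nothing in the paper to compare your argument against line by line; what can be assessed is whether your sketch would stand on its own. Your overall architecture is sound --- prove (1) first, then deduce (2) formally via nonabelian proper base change for closed immersions of $\infty$-topoi and the projection formula for the fully faithful $i_*$ --- and that deduction of (2) from (1) is essentially the correct one.

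The proof of (1), however, has a genuine gap at its core. First, the opening reduction is not available: localizing on $\cY'$ replaces $\Sh((Y')\topl)$ by $\Sh(U\topl)$ for opens $U$, never by $\cS$, and assertion (1) concerns the identification of a full $\cTan$-structure, which is not a stalkwise statement. More seriously, your reduction to analytic square-zero extensions via the Postnikov tower only handles the \emph{derived} direction of the problem. A general closed immersion $j \colon X \to Y$ is not an inverse limit of square-zero extensions of $Y$: the ideal $\Ker(\pi_0(j\inv\cO_Y) \to \pi_0(\cO_X))$ need not be nilpotent (consider a point in $\mathbf A^1_{\mathbb C}$). So after running your induction you are still left with the base case of a closed immersion of \emph{ordinary} analytic spaces, and that is precisely where all the analytic content lives: one must show that the coproduct of \emph{local} $\cTan$-structures $g^*\cO_X \amalg_{p^*\cO_Y} i^*\cO_{Y'}$ requires no further localization and has underlying algebra the plain tensor product, i.e.\ that the quotient of an analytic ring by a finitely generated ideal is again canonically an analytic ring with the correct universal property among local morphisms. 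Your sketch defers exactly this point back to \cite[\S\S 1, 11]{DAG-IX}, which makes the argument circular as a proof of the theorem. (A smaller quibble: the fact that an analytic square-zero extension has an algebraic square-zero extension as its underlying algebra is really a consequence of the identification of analytic and algebraic modules and derivations, \cref{thm:equivalence_modules}, rather than of unramifiedness itself.)
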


Finally, a word about analytic stacks with values in $\infty$-categories.
We denote by
\[ \dAnSt^{\mathrm{cat}} \coloneqq \Sh_{\Cat_\infty}( \dStn, \tauan ) \]
the $\infty$-category of sheaves on $(\dStn, \tauan)$ with values in $\Cat_\infty$.
Notice that there is a canonical embedding $\dAnSt \hookrightarrow \dAnSt^{\mathrm{cat}}$ that is induced by the natural inclusion $\cS \hookrightarrow \Cat_\infty$ of $\infty$-groupoids into $\infty$-categories.
Notice also that $\dAnSt^{\mathrm{cat}}$ has a (cartesian) symmetric monoidal structure and that it is closed with respect to this monoidal structure.
Furthermore, the inclusion $\dAnSt \hookrightarrow \dAnSt^{\mathrm{cat}}$ is strong monoidal and it commutes with the internal hom.
For this reason, we denote the internal hom in $\dAnSt^{\mathrm{cat}}$ simply as $\bfMap$.

\section{De Rham and Betti shapes} \label{sec:shapes}

We denote by $\dStn$ the $\infty$-category of derived Stein spaces, as introduced in \cite{Porta_DCAGI}.
We further denote by $\Stnred$ the category of reduced (discrete) Stein spaces.
Finally, we let
\[ j \colon \Stnred \hookrightarrow \dStn \]
denote the natural fully faithful inclusion.
The analytic topology $\tauan$ on $\dStn$ restricts to a Grothendieck topology on $\Stnred$, which we still denote $\tauan$.
We let $\dAnSt \coloneqq \St( \dStn, \tauan )$ be the $\infty$-topos of derived analytic stacks.
The following elementary observation will be useful later on:

\begin{lem} \label{lem:Stein_reduced_cocontinuous}
	The functor $j \colon \Stnred \hookrightarrow \dStn$ is both continuous and cocontinuous for the analytic topologies on these two categories.
	In particular, the restriction
	\[ j^s \colon \dAnSt \to \St( \Stnred, \tauan ) \]
	admits both a left adjoint and a right adjoint.
\end{lem}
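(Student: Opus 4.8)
The plan is to verify directly that $j$ satisfies the two site-theoretic conditions of continuity and cocontinuity, and then to invoke the general formalism (see \cite[\S 2.4]{Porta_Yu_Higher_analytic_stacks_2014}) according to which continuity of a morphism of $\infty$-sites produces a left adjoint to the restriction functor on sheaves, while cocontinuity produces a right adjoint. The single functor appearing in both statements is precisely the restriction $j^s = (-) \circ j \colon \dAnSt \to \St(\Stnred, \tauan)$; so once both conditions are established, $j^s$ automatically preserves sheaves and acquires adjoints on both sides.

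First I would establish continuity. Two things must be checked: that $j$ carries $\tauan$-covers to $\tauan$-covers, and that $j$ commutes with the fiber products entering the covering condition. The first is immediate, since the analytic topology on $\Stnred$ is by definition the restriction of the one on $\dStn$, and $j$ is fully faithful and preserves open immersions. For the second, let $\{V_i \hookrightarrow V\}$ be an analytic cover in $\Stnred$ and let $T \to V$ be an arbitrary morphism; I must compare $j(T \times_V V_i)$ with $j(T) \times_{j(V)} j(V_i)$. The crucial point is that the maps $V_i \to V$ are \emph{open immersions}. Pullback along an open immersion is computed as a preimage open subset at the level of the underlying $\infty$-topoi, and the structure sheaf of such a preimage is simply the restriction of $\cO_T$; in particular no derived or non-reduced structure is created, so $T \times_V V_i$ is again a reduced discrete Stein space and the comparison map is an equivalence in $\dStn$. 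This is exactly where the restriction to covers by open immersions is indispensable.

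Next I would establish cocontinuity, which in this situation is even more transparent. Fix $U \in \Stnred$ and a $\tauan$-cover $\{W_j \hookrightarrow j(U)\}$ in $\dStn$. I claim each $W_j$ lies in the essential image of $j$. Indeed, an open immersion into $j(U)$ corresponds to an open subobject of the underlying $\infty$-topos together with the restricted structure sheaf; since $\cO_{j(U)}$ is discrete and reduced, so is its restriction, whence $W_j \simeq j(U_j)$ for an open reduced Stein subspace $U_j \hookrightarrow U$, and the map $W_j \to j(U)$ is $j$ applied to $U_j \hookrightarrow U$. Thus $\{U_j \hookrightarrow U\}$ is a $\tauan$-cover in $\Stnred$ whose image under $j$ is the given cover, which verifies (indeed over-verifies) the cocontinuity condition.

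Having both properties, the restriction $j^s$ admits a left adjoint (from continuity, obtained by sheafifying the left Kan extension along $j$) and a right adjoint (from cocontinuity); alternatively, one observes that $j^s$ then preserves both limits and colimits between presentable $\infty$-categories and concludes by the adjoint functor theorem. I expect the only genuine subtlety to be the fiber-product compatibility in the proof of continuity; everything else reduces to the single observation that an open immersion into a reduced discrete Stein space again has reduced discrete Stein domain, which simultaneously controls the pullbacks in the continuity argument and trivializes the refinement in the cocontinuity argument.
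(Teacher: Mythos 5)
Your proposal is correct and follows essentially the same route as the paper: continuity is checked by noting that $j$ preserves covers and commutes with pullbacks along open immersions, cocontinuity by observing that any analytic cover of a reduced discrete Stein space in $\dStn$ consists of (local biholomorphisms onto) reduced discrete Stein spaces and hence already lives in $\Stnred$, and the adjoints are then supplied by the general formalism of \cite[\S 2.4]{Porta_Yu_Higher_analytic_stacks_2014}. The extra detail you give on why open pullbacks create no derived or non-reduced structure is exactly the point the paper leaves implicit.
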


\begin{proof}
	We refer to \cite[\S 2.4]{Porta_Yu_Higher_analytic_stacks_2014} for the terminology of morphisms of $\infty$-sites and for the choice of notations.
	It is clear that the functor $j$ takes $\tauan$-coverings to $\tauan$-coverings, and that it commutes with pullbacks along open immersions.
	It follows that $j$ is continuous.
	To see why it is cocontinuous, fix $X \in \Stnred$ and let $\{U_i \to X\}$ be a $\tauan$-cover in $\dStn$.
	Since the maps $U_i \to X$ are local biholomorphisms, we see that each $U_i$ has to be reduced on its own.
	In particular, $\{U_i \to X\}$ is a $\tauan$-cover in $\Stnred$. Thus $j$ is cocontinuous.
	
	The last assertion now follows from \cite[Lemmas 2.14 \& 2.21]{Porta_Yu_Higher_analytic_stacks_2014}.
\end{proof}

Following \cite{CPTVV}, we give the following definition:

\begin{defin}
	\begin{enumerate}
		\item The \emph{De Rham functor} is the functor
		\[ (-)\DR \colon \dAnSt \to \dAnSt \]
		defined as the composition
		\[ \begin{tikzcd}
			\dAnSt \arrow{r}{j^s} & \St( \Stnred, \tauan ) \arrow{r}{\Lan_j} & \dAnSt ,
		\end{tikzcd} \]
		where $\Lan_j$ denotes the left Kan extension along $j$.
		\item The \emph{reduction functor} is the functor
		\[ (-)\red \colon \dAnSt \to \dAnSt \]
		defined as the composition
		\[ \begin{tikzcd}
			\dAnSt \arrow{r}{j^s} & \St( \Stnred, \tauan ) \arrow{r}{\mathrm{Ran}_j} & \dAnSt ,
		\end{tikzcd} \]
		where $\mathrm{Ran}_j$ denotes the right Kan extension along $j$.
	\end{enumerate}
\end{defin}

As in the algebraic setting, given $F \in \dAnSt$, we can characterize $F\DR$ as the composition $F \circ j \circ (-)\red$.
In other words, for every $U \in \dStn$, we have:
\[ F\DR(U) \simeq F( \trunc(U)\red ) . \]

Let us record the following handy consequence of \cref{lem:Stein_reduced_cocontinuous}:

\begin{cor} \label{cor:De_Rham_colimits}
	The De Rham functor $(-)\DR \colon \dAnSt \to \dAnSt$ commutes with finite products and with arbitrary colimits.
	The reduced functor $(-)\red \colon \dAnSt \to \dAnSt$ commutes with arbitrary limits.
\end{cor}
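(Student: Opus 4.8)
The plan is to obtain all three statements from the adjoint triple furnished by \cref{lem:Stein_reduced_cocontinuous}. Since $j$ is simultaneously continuous and cocontinuous, the restriction functor $j^s$ fits into an adjunction
\[ \Lan_j \dashv j^s \dashv \mathrm{Ran}_j \]
on the associated $\infty$-categories of stacks, with $\Lan_j$ and $\mathrm{Ran}_j$ the left and right Kan extensions entering the definitions of $(-)\DR$ and $(-)\red$. From this I read off immediately that $\Lan_j$ preserves colimits, that $\mathrm{Ran}_j$ preserves limits, and that $j^s$—having adjoints on both sides—preserves limits and colimits alike.

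Two of the three assertions are then purely formal. As $(-)\DR = \Lan_j \circ j^s$ is a composite of two colimit-preserving functors, it preserves arbitrary colimits; dually, $(-)\red = \mathrm{Ran}_j \circ j^s$ is a composite of two limit-preserving functors, hence preserves arbitrary limits. No further input is required for the colimit clause of the De Rham functor or for the reduction functor.

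The remaining claim—that $(-)\DR$ preserves finite products—cannot be folded into the adjunction argument, since $\Lan_j$ need not commute with limits and indeed $(-)\DR$ does not preserve limits in general. Here I would instead invoke the pointwise description recorded just above the corollary, namely $F\DR(U) \simeq F(\trunc(U)\red)$. The inclusion $\dAnSt \hookrightarrow \PSh(\dStn)$ is right adjoint to sheafification, so it preserves limits; in particular finite products of stacks are computed objectwise. Fixing $U \in \dStn$ and abbreviating $V \coloneqq \trunc(U)\red$, for any $F, G \in \dAnSt$ this yields
\[ (F \times G)\DR(U) \simeq (F \times G)(V) \simeq F(V) \times G(V) \simeq F\DR(U) \times G\DR(U), \]
naturally in $U$; together with the evident identification of terminal objects this gives $(F \times G)\DR \simeq F\DR \times G\DR$ and hence compatibility with all finite products.

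I do not expect a genuine obstacle: the argument is essentially bookkeeping about which Kan extension is a left versus a right adjoint. The only point that demands care is the separation just highlighted—the finite-product clause for $(-)\DR$ must be handled by the objectwise formula rather than by a formal adjunction argument alone—and this in turn relies on the pointwise characterization $F\DR(U) \simeq F(\trunc(U)\red)$ holding throughout $\dAnSt$, which is precisely what is asserted in the paragraph preceding the corollary.
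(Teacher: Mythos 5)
Your proof is correct, and for two of the three clauses it is exactly the paper's argument: both factor $(-)\DR = \Lan_j \circ j^s$ and $(-)\red = \mathrm{Ran}_j \circ j^s$ through the adjoint triple $\Lan_j \dashv j^s \dashv \mathrm{Ran}_j$ furnished by \cref{lem:Stein_reduced_cocontinuous} and conclude formally that the first composite preserves colimits and the second preserves limits. The two arguments part ways only on the finite-product clause for $(-)\DR$. The paper stays inside the Kan-extension picture: since $j \colon \Stnred \hookrightarrow \dStn$ preserves finite products, $\Lan_j$ preserves finite products as well (the standard argument writes a presheaf as a colimit of representables and uses universality of colimits in the target $\infty$-topos), and $j^s$ preserves limits as a right adjoint, so the composite preserves finite products. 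You instead invoke the objectwise description $F\DR(U) \simeq F(\trunc(U)\red)$ recorded just before the corollary, together with the observation that finite products of stacks are computed objectwise. Your route is arguably more transparent, but it leans on that pointwise characterization, which the paper asserts without proof (``as in the algebraic setting''), whereas the paper's route needs only the product-preservation of $j$, immediate from the definition of $\Stnred$. Both are legitimate, and your explicit flagging that the product clause cannot be extracted from the adjunction alone is exactly the right point of care.
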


\begin{proof}
	Let us prove the assertion for $(-)\DR$; the one for $(-)\red$ can be dealt with in a similar way.
	By definition, $(-)\DR$ can be written as the composition $\Lan_j(-) \circ j^s$.
	The functor $\Lan_j(-)$ is the left adjoint of $j^s$.
	In particular, it commutes with arbitrary colimits.
	We now conclude by observing that $j^s \colon \dAnSt \to \St(\Stnred, \tauan)$ commutes with colimits in virtue of \cref{lem:Stein_reduced_cocontinuous}.
	Moreover, since the functor $j \colon \Stnred \hookrightarrow \dStn$ commutes with finite products, we see that $\Lan_j$ also commutes with finite products.
	Hence $(-)\DR$ commutes with finite products.
\end{proof}

Let now $X \in \dAn_{\mathbb C}$ be a derived \canal space.
Suppose that $\trunc(X)$ is an usual \canal space (in other words, the underlying $\infty$-topos of $X$ is $0$-localic).
We let $X\topl$ be the underlying topological space of $X$.
Observe that this construction provides us with a functor
\[ (-)\topl \colon \dStn \to \cS . \]

\begin{defin}
	Let $X \in \dAn_{\mathbb C}$ be as above.
	The \emph{Betti form} of $X$ is the constant stack
	\[ X\B \colon \dStn\op \to \cS \]
	associated to $X\topl$.
\end{defin}

\begin{rem}
	Observe that neither $X\DR$ nor $X\B$ are sensitive to the derived structure on $X$.
	Indeed, since $X$ and $\trunc(X)$ share the same underlying $\infty$-topos, we have a canonical equivalence
	\[ X\B \simeq (\trunc(X))\B . \]
	On the other side, the universal property of the truncation shows that for every $U \in \dStn$, we have
	\[ \Map_{\dAn_{\mathbb C}}(U\red, X) \simeq \Map_{\dAn_{\mathbb C}}(U\red, \trunc(X)) . \]
	It follows that there is a canonical equivalence $X\DR \simeq (\trunc(X))\DR$.
\end{rem}

Fix a derived \canal space $X \in \dAn_{\mathbb C}$ whose underlying $\infty$-topos is $0$-localic.
As first observed by C.\ Simpson, there is a remarkable morphism of stacks
\[ X\DR \to X\B , \]
which we are going to construct.

\begin{lem} \label{lem:alternative_Betti_form}
	The Betti form $X\B$ is naturally equivalent to the $\infty$-functor
	\begin{equation} \label{eq:alternative_Betti_form}
		\Map_\cS( (-)\topl, X\topl ) \colon \dStn\op \to \cS .
	\end{equation}
\end{lem}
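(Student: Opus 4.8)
The plan is to exhibit an explicit natural transformation out of the constant presheaf and to show that after sheafification it becomes the asserted equivalence. Write $\underline{X\topl}$ for the constant presheaf on $\dStn$ with value $X\topl$, so that by definition $X\B = L\underline{X\topl}$, where $L$ denotes $\tauan$-sheafification, and set $G \coloneqq \Map_\cS((-)\topl, X\topl)$. First I would construct a map $\underline{X\topl} \to G$: on $U \in \dStn$ it is the map $X\topl = \Map_\cS(\ast, X\topl) \to \Map_\cS(U\topl, X\topl)$ induced by the collapse $U\topl \to \ast$, that is, the map sending a point to the corresponding constant map. This is natural in $U$ because $(-)\topl$ is a functor. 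Granting that $G$ is a $\tauan$-sheaf, the universal property of $L$ yields a canonical comparison $X\B \to G$, and the whole content of the lemma is that this comparison is an equivalence.

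Next I would verify that $G$ satisfies descent. For a $\tauan$-cover $\{U_i \to U\}$ with \v{C}ech nerve $U_\bullet$, the functor $\Map_\cS(-, X\topl)$ carries colimits to limits, so the sheaf condition for $G$ at this cover is equivalent to the assertion
\[ U\topl \simeq \colim_{[n] \in \Delta\op} (U_\bullet)_n\topl . \]
Since admissible morphisms in $\cTan$ are open immersions, the terms $(U_\bullet)_n$ are disjoint unions of the finite intersections $U_{i_0} \cap \cdots \cap U_{i_n}$, so this is precisely \v{C}ech descent for the underlying homotopy type of an analytic space along an open cover; equivalently, $U\topl$ is the shape of the (hypercomplete) topos $\Sh(U\topl)$ and shape descends along open hypercovers. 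I expect this codescent statement to be the main obstacle: it is not formal and rests on the fact that analytic spaces are paracompact and locally contractible, so that the realization of the \v{C}ech nerve of an open cover recovers the homotopy type.

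Finally I would show that the comparison $X\B \to G$ is an equivalence by checking it on a convenient basis. Every Stein space admits a basis $\cB$ of Stein opens whose underlying homotopy type is contractible---for instance small polydiscs read off from a local embedding into some $\mathbf{A}^n_{\mathbb C}$. For $U \in \cB$ one has $U\topl \simeq \ast$, hence $G(U) = \Map_\cS(U\topl, X\topl) \simeq \Map_\cS(\ast, X\topl) = X\topl$, and under this identification the map $\underline{X\topl}(U) \to G(U)$ is the identity. Thus $\underline{X\topl}|_{\cB} \to G|_{\cB}$ is an objectwise equivalence of presheaves on $\cB$. Since $\cB$ generates $\tauan$, the comparison lemma identifies $\tauan$-sheaves with their restrictions to $\cB$; as $G$ is already a sheaf by the previous step and the map is a basiswise equivalence, it follows that $G \simeq L\underline{X\topl} = X\B$, so the canonical comparison is the sought natural equivalence. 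Conceptually, the two previous steps together say that $(-)\topl$ realizes $\dAnSt$ as locally of constant shape, with $\mathrm{Sing}(-) \coloneqq \Map_\cS((-)\topl, -)$ computing the associated constant stacks; the lemma is then the instance of this at the value $X\topl$.
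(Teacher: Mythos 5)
Your proof is correct and follows essentially the same route as the paper: both construct the constant-map comparison $\underline{X\topl} \to \Map_\cS((-)\topl, X\topl)$ and verify it is an equivalence on Stein opens with weakly contractible underlying space, which generate the analytic topology. The only differences are cosmetic — the paper produces the comparison map via a right Kan extension adjunction rather than writing it down directly, and it leaves the sheaf property of $\Map_\cS((-)\topl, X\topl)$ (your \v{C}ech codescent step) implicit, whereas you make it explicit.
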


\begin{proof}
	Denote by $\widetilde{X\B}$ the constant presheaf
	\[ \widetilde{X\B} \colon \dStn\op \to \cS \]
	associated to $X\topl$.
	In this way, $X\B$ is the sheafification of $\widetilde{X\B}$.
	Denote further by $F$ the functor \eqref{eq:alternative_Betti_form}.
	We have
	\[ \Map_{\dAnSt}(X\B, F) \simeq \Map_{\PSh(\dStn)}(\widetilde{X\B}, F) . \]
	Let $P \coloneqq \{*\}$ be the terminal category and consider the canonical functor
	\[ \pi \colon \dStn \to P . \]
	The induced functor
	\[ \pi_p \colon \cS \simeq \PSh(P) \to \PSh(\dStn) \]
	takes $X\topl$ to $\widetilde{X\B}$.\footnote{We refer to \cite[\S 2.4]{Porta_Yu_Higher_analytic_stacks_2014} and to \cite{stacks-project} for the choice of the notations.}
	As a consequence, we see that
	\[ \Map_{\PSh(\dStn)}(\widetilde{X\B}, F) \simeq \Map_{\PSh(\dStn)}(\pi_p (X\topl), F) \simeq \Map_{\cS}( X\topl, \mathrm{Ran}_\pi(F) ) . \]
	Since $\dStn$ has a final object (given by $*_{\mathbb C}$, the point seen as a \canal space), we obtain:
	\[ \mathrm{Ran}_\pi(F) \simeq F(*_{\mathbb C}) \simeq X\topl . \]
	Therefore, the identity $X\topl \xrightarrow{\mathrm{id}} X\topl$ induces a map
	\[ \widetilde{\varphi} \colon \widetilde{X\B} \to F . \]
	The universal property of sheafification produces in turn a map
	\[ \varphi \colon X\B \to F . \]
	We claim that this map is an equivalence.
	
	To see this, introduce the full subcategory $\dStn^{\mathrm{wc}}$ spanned by those derived Stein spaces $U \in \dStn$ such that $U\topl$ is weakly contractible.
	Since every $V \in \dStn$ admits an hypercover by objects in $\dStn^{\mathrm{wc}}$, it is enough to check that $\varphi$ is an equivalence on the elements of $\dStn^{\mathrm{wc}}$.
	
	Observe now that, for any $U \in \dStn^{\mathrm{wc}}$, we have
	\[ \widetilde{X\B}(U) \simeq X\topl, \qquad F(U) \simeq \Map_{\cS}(U\topl, X\topl) , \]
	and the map $\widetilde{\varphi}_U$ corresponds, via adjunction, to the canonical projection
	\[ X\topl \times U\topl \to X\topl . \]
	Since $U\topl$ is contractible in $\cS$, we conclude that $\widetilde{\varphi}_U$ is an equivalence.
	It follows that $\varphi_U$ is an equivalence as well.
	The proof is now complete.
\end{proof}

Let again $X \in \dAn_{\mathbb C}^0$ be a derived \canal space whose underlying $\infty$-topos is $0$-localic.
We review $X$ as an object in $\dAnSt$ via its functor of points.
Observe that
\[ X\DR = \Map_{\dAn_{\mathbb C}^0}((-)\red, X) . \]
This provides us with a natural transformation
\[ \Map_{\dAn_{\mathbb C}^0}((-)\red, X) \to \Map_{\cS}(((-)\red)\topl, X\topl) \simeq \Map_{\cS}((-)\topl, X\topl) . \]
Using \cref{lem:alternative_Betti_form}, we can identify the right hand side with $X\B$.
This produces a map $\eta_{\mathrm{RH}} \colon X\DR \to X\B$.
We refer to $\eta_{\mathrm{RH}}$ to as the \emph{Riemann-Hilbert transformation}.
The goal of this paper is to show that precomposition with $\eta_{\mathrm{RH}}$ induces an equivalence of derived analytic stacks (see \cref{thm:derived_RH}):
\begin{equation} \label{eq:derived_RH}
	\eta_{\mathrm{RH}}^* \colon \bfMap(X\B, \bfAnPerf) \to \bfMap(X\DR, \bfAnPerf) .
\end{equation}

\begin{rem} \label{rem:explaining_why_analytic_topology}
	The map $\eta_{\mathrm{RH}}$ arises naturally as a zig-zag
	\[ X\DR \to \Map_{\cS}((-)\topl, X\topl) \leftarrow X\B . \]
	This same zig-zag exists when working over the category of affine derived schemes $\mathrm{dAff}_{\mathbb C}$.
	However, in this situation \cref{lem:alternative_Betti_form} fails and therefore we cannot invert the morphism on the right.
	In other words, the construction of $\eta_{\mathrm{RH}}$ explicitly requires the analytic topology.
\end{rem}

Before starting to prove that the map \eqref{eq:derived_RH} is an equivalence, a number of preliminaries are necessary.
As first step, we need to develop some techniques in order to deal with the stack $X\DR$.

\section{Formal completions in analytic geometry}

In algebraic geometry, the stack $X\DR$ is intrinsically related to formal geometry.
More precisely, the canonical map $p \colon X \to X\DR$ can be thought of as parametrizing a family of formal schemes mapping to $X$.
See \cite[\S 2]{CPTTV} for careful explanations of this idea.
We recall here two basic results that will be needed later:

\begin{lem} \label{lem:De_Rham_analytic}
	Let $X$ be a smooth analytic space and let $p \colon X \to X\DR$ be the canonical morphism.
	Then:
	\begin{enumerate}
		\item the map $p$ is an effective epimorphism in $\dAnSt$. In particular, if $X^\bullet / X\DR$ denotes the \v{C}ech nerve of $p$, then there is an equivalence
		\[ |X^\bullet / X\DR| \simeq X\DR \]
		inside $\dAnSt$.
		
		\item Let $\Delta_X^{(n)}$ be the $n$-th infinitesimal neighbourhood of the diagonal in $X \times X$.
		Then there is an equivalence
		\[ \colim_n \Delta_X^{(n)} \simeq X \times_{X\DR} X , \]
		the colimit being computed in $\PSh(\dStn)$.
	\end{enumerate}
\end{lem}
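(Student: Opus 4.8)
Plan for part (1). The plan is to unwind the definitions and reduce the claim that $p$ is an effective epimorphism to a local lifting problem. Recall that $X\DR(U) \simeq X(\trunc(U)\red)$, so a $U$-point of $X\DR$ is a morphism $g \colon \trunc(U)\red \to X$ in $\dAn_{\mathbb C}$, and $p$ sends a morphism $U \to X$ to its restriction along the closed immersion $\trunc(U)\red \hookrightarrow U$. Since effective epimorphisms in the $\infty$-topos $\dAnSt$ can be tested locally — a morphism of sheaves is an effective epimorphism as soon as every section of the target lifts, after passing to a $\tauan$-cover, to a section of the source — it suffices to show that for every $U \in \dStn$ and every $g \colon \trunc(U)\red \to X$ there is a $\tauan$-cover $\{U_i \to U\}$ such that each $g|_{U_i}$ extends to a morphism $U_i \to X$. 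First I would use smoothness of $X$ to reduce, locally on $X$, to the case where $X$ is an open polydisc in $\mathbf A^d_{\mathbb C}$. There a morphism $\trunc(U)\red \to X$ is the datum of $d$ sections of $\pi_0(\cO_{\trunc(U)\red})$ taking values in the polydisc; since $\trunc(U)\red \hookrightarrow U$ is a closed immersion, the induced map on structure sheaves is an effective epimorphism, so each such section lifts to a section of $\pi_0(\cO_U)$, and after shrinking $U$ so that the lifts still take values in the polydisc (an open condition) one obtains the desired extension $U \to X$. The ``in particular'' clause then follows formally, since in an $\infty$-topos the geometric realization of the \v{C}ech nerve of an effective epimorphism recovers its target (see \cite[\S 6.2.3]{HTT}).

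Plan for part (2), identification step. For the second assertion the plan is to identify both sides, as presheaves on $\dStn$, with the formal completion of $X \times X$ along the diagonal. Evaluating the fiber product at $U \in \dStn$ gives $(X \times_{X\DR} X)(U) \simeq X(U) \times_{X(\trunc(U)\red)} X(U)$, which unwinds to the space of morphisms $h \colon U \to X \times X$ whose restriction to $\trunc(U)\red$ factors through the diagonal $\Delta \colon X \hookrightarrow X \times X$. On the other hand, each infinitesimal neighbourhood $\Delta_X^{(n)}$ is a closed subspace of $X \times X$ with the same reduction as $\Delta$, so the canonical closed immersions $\Delta_X^{(n)} \hookrightarrow X \times X$ reduce into $\Delta$ and assemble into a natural map $\colim_n \Delta_X^{(n)} \to X \times_{X\DR} X$; as the target is a limit, this colimit may be formed in $\PSh(\dStn)$, and it remains to check the map is a pointwise equivalence.

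The crux. Pointwise, the comparison map is a monomorphism, since both sides are subobjects of $(X \times X)(U)$ and the transition maps $\Delta_X^{(n)} \hookrightarrow \Delta_X^{(n+1)}$ are compatible closed immersions; so the content is essential surjectivity, namely that a morphism $h \colon U \to X \times X$ reducing into $\Delta$ factors through some $\Delta_X^{(n)}$. Here I would use smoothness of $X$, so that the ideal $\cI$ of $\Delta$ is locally generated by $d = \dim_{\mathbb C} X$ functions; consequently $h^* \cI$ is, locally on $U$, generated by finitely many sections of the nilradical of $\cO_U$, whence locally $h^* \cI^{\,n+1} = 0$ for some $n$ and $h$ factors through $\Delta_X^{(n)}$. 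To compute the relevant pullbacks and to match the local model of $\Delta_X^{(n)}$ with that of $X \times_{X\DR} X$, I would invoke the unramifiedness of $\cTan$ in the form of \cref{thm:unramifiedness}, which controls the structure sheaf of a pullback along the closed immersion $\Delta$. The main obstacle is the passage from these local factorizations to a single global $n$: over a non-compact Stein base the nilpotence order need not be uniformly bounded. I expect to resolve this by working on a basis of relatively compact Stein opens — on each such open the nilradical is nilpotent with a uniform bound, so the local factorization is available already at a finite stage — and then arguing that both presheaves satisfy analytic descent, so that the pointwise equivalence over this basis propagates to all of $\dStn$. Carrying out this descent-theoretic globalization, together with the bookkeeping of the higher coherences in the filtered colimit, is the step I expect to demand the most care.
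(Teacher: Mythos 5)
Your treatment of part (1) is essentially the paper's argument: reduce to a local lifting problem, chart $X$ by open balls in $\mathbf A^d_{\mathbb C}$, and lift the $d$ coordinate functions along $\cO_U \to \cO_{U\red}$. The paper lifts them \emph{globally} on the Stein space $U$ by invoking Cartan's Theorem B on the sequence $0 \to \cJ \to \cO_U \to \cO_{U\red} \to 0$, and gets the factorization through the open subset $X \subset \mathbf A^d_{\mathbb C}$ for free from the observation that $U\topl = (U\red)\topl$, so the underlying continuous map of the lift is unchanged; your local lifting plus shrinking is equally valid for establishing an effective epimorphism (and your shrinking step is in fact vacuous, for the same topological reason).

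For part (2) the paper gives no argument at all: it simply asserts that the proof of Gaitsgory's Proposition 6.5.5 on indschemes carries over, with unramifiedness (\cref{thm:unramifiedness}) allowing one to forget the analytic structure when handling the closed immersion $\Delta$. You have reconstructed exactly that proof and, more importantly, put your finger on the one step that does \emph{not} transfer verbatim: Gaitsgory's argument uses quasi-compactness of affine test schemes to promote local nilpotence of $h^*\cI$ to a single global exponent, and Stein spaces are not compact. One caveat about your proposed fix: the presheaf $\colim_n \Delta_X^{(n)}$, formed in $\PSh(\dStn)$, is a filtered colimit of sheaves and is \emph{not} itself a sheaf for $\tauan$, so ``both presheaves satisfy analytic descent'' fails for the left-hand side, and your descent-based globalization only identifies its \emph{sheafification} with $X \times_{X\DR} X$ -- not the presheaf itself, as the statement literally asserts (a test object such as an infinite disjoint union of increasingly fat points, if admitted in $\dStn$, genuinely breaks the pointwise statement). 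In practice this is harmless: every use of part (2) in the paper is through \cref{cor:De_Rham_analytic_coherent_sheaves}, i.e.\ after applying the sheaf $\Coh^-$, and the paper's own device for this kind of issue is to restrict to noetherian Stein compacts via \cite[7.3.4.9]{HTT}, where your uniform bound on the nilpotence order is available. So your plan is sound, but you should either state the conclusion in $\dAnSt$ or make the reduction to Stein compacts explicit rather than appealing to descent for the unsheafified colimit.
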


\begin{proof}
	We start by proving (1). It is enough to prove that $p$ is an effective epimorphism.
	Let therefore $U \in \dStn$ and let $f \colon U \to X\DR$ be any morphism.
	We have to show that, up to covering $U$, the morphism $f$ can be lifted to a map $\overline{f} \colon U \to X$.
	This question is local on $X$, and therefore we can assume that $X$ is biholomorphic to an open ball in $\mathbf A^n_{\mathbb C}$.
	Recall that, by definition, the map $f \colon U \to X\DR$ corresponds to a map $U\red \to X$.
	The composition
	\[ g \colon U\red \to X \hookrightarrow \mathbb A^n_{\mathbb C} \]
	is equivalent to the choice of $n$ holomorphic functions on $U\red$, that we denote $g_1, \ldots, g_n$.
	Let $\cO_U$ be the structure sheaf of $U$ and let $\cJ$ denote the nilradical sheaf, so that there is a short exact sequence
	\[ 0 \to \cJ \to \cO_U \to \cO_{U\red} \to 0 . \]
	Since $U$ is Stein, passing to global sections and invoking Cartan's theorem B we obtain
	\[ \Gamma(U; \cO_{U\red}) \simeq \Gamma(U, \cO_U) / \Gamma(U; \cJ) . \]
	In particular, we can find holomorphic functions $\overline{f_1}, \ldots, \overline{f_n}$ on $U$ that represent the functions $g_1, \ldots, g_n$ on $U\red$.
	These new functions on $U$ define a map
	\[ \overline{f} \colon U \to \mathbf A^n_{\mathbb C} . \]
	Since the map $X \to \mathbf A^n_{\mathbb C}$ is an open immersion, we know that $\overline{f}$ factors through $X$ if and only if it factors on the topological level.
	However, since $U\topl = (U\red)\topl$ and since the map $\overline{f}$ is a lifting of the map $g \colon U\red \to X \to \mathbf A^n_{\mathbb C}$, we see that $\overline{f}$ factors topologically through $X$.
	This produces the lifting we were looking for.
	
	As for point (2), the same proof of \cite[Proposition 6.5.5]{Gaitsgory_Indschemes} applies.
	\personal{Thanks to unramifiedness, in dealing with closed immersions of derived analytic spaces, we can forget the analytic structure.}

\end{proof}

In virtue of the above lemma it is not surprising that, in dealing with $X\DR$ in the \canal world, we need some results concerning formal completions in \canal geometry.
The idea of mixing formal geometry and complex analytic geometry is certainly not new, and it goes back at least to the book \cite[Chapter VI]{Banica_Stanisla}.
The technical implementation is essentially straightforward, but, as usual in complex geometry, there is some subtlety due to the fact that the Stein open subsets are not compact (as opposed to the situation in rigid geometry).

The above lemma has the following useful consequences:

\begin{cor} \label{cor:De_Rham_analytic_coherent_sheaves}
	Let $X$ be a smooth analytic space.
	Then:
	\begin{enumerate}
		\item there is an equivalence of stable $\infty$-categories
		\[ \Coh^-(X\DR) \simeq \varprojlim \Coh^-(X^\bullet / X\DR) . \]
		\item There is an equivalence of stable $\infty$-categories
		\[ \Coh^-(X \times_{X\DR} X) \simeq \varprojlim_n \Coh^-(\Delta^{(n)}_X) . \]
	\end{enumerate}
\end{cor}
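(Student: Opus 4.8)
The plan is to derive both equivalences formally from \cref{lem:De_Rham_analytic}, the essential input being the sheaf property of $\bfAnCoh^-$ recorded after the definition of $\Coh^-$: the assignment $U \mapsto \Coh^-(U)$ is a sheaf of stable $\infty$-categories on $(\dStn, \tauan)$, established via reduction to the heart. Two consequences of this are what I need. First, $\Coh^-$ extends to all of $\dAnSt$ by right Kan extension along Yoneda, $\Coh^-(F) \simeq \varprojlim_{U \to F} \Coh^-(U)$, and this extension sends presheaf colimits to limits of stable $\infty$-categories by the universal property of $\PSh(\dStn)$. Second, descent guarantees that on a derived analytic space this extended value agrees with the intrinsic $\Coh^-$ and, more generally, that the extension sends geometric realizations of \v{C}ech nerves of effective epimorphisms to limits. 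Granting these, each clause follows from the matching clause of \cref{lem:De_Rham_analytic}.

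For part (1), \cref{lem:De_Rham_analytic}(1) gives that $p \colon X \to X\DR$ is an effective epimorphism, whence $X\DR \simeq |X^\bullet/X\DR|$ in $\dAnSt$. Applying $\Coh^-$ and using descent along $p$ produces the totalization
\[ \Coh^-(X\DR) \simeq \varprojlim \Coh^-(X^\bullet/X\DR) . \]
For part (2), \cref{lem:De_Rham_analytic}(2) presents $X \times_{X\DR} X$ as $\colim_n \Delta_X^{(n)}$ \emph{already in $\PSh(\dStn)$}. Since the extended $\Coh^-$ sends presheaf colimits to limits, and since on each analytic space $\Delta_X^{(n)}$ it recovers the intrinsic category of almost perfect modules, I obtain
\[ \Coh^-(X \times_{X\DR} X) \simeq \varprojlim_n \Coh^-(\Delta_X^{(n)}) . \]

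The main obstacle is not in either formal manipulation but in securing the two facts above, both of which rest on the sheaf property of $\bfAnCoh^-$. For part (1) the delicate point is that $p$ is an effective epimorphism of stacks rather than a $\tauan$-cover of representables, so one must know that analytic descent propagates to \v{C}ech descent along every effective epimorphism in the ambient $\infty$-topos $\dAnSt$---a standard feature of sheaves of categories, but one that genuinely uses that $\bfAnCoh^-$ is a sheaf and not merely a presheaf. For part (2) the colimit is handed to us at the presheaf level, so the universal property of $\PSh(\dStn)$ alone converts it to a limit; descent enters only to identify the value on each $\Delta_X^{(n)}$ with its intrinsic category of almost perfect modules, and to ensure the resulting limit computes $\Coh^-$ of the sheaf $X \times_{X\DR} X$. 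In both cases the requisite descent is exactly what the reduction-to-the-heart argument supplies, so once that is in hand the corollary is immediate.
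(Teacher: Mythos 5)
Your argument is correct and matches the paper's (implicit) reasoning: the paper offers no written proof, presenting the corollary as an immediate consequence of \cref{lem:De_Rham_analytic} together with the sheaf property of $\bfAnCoh^-$, which is exactly the descent-along-the-\v{C}ech-nerve and presheaf-colimit-to-limit argument you spell out. Your care in distinguishing the two cases (effective epimorphism in $\dAnSt$ for part (1) versus a colimit already at the presheaf level for part (2)) is the right reading of the lemma.
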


This description of $\Coh^-(X\DR)$ is quite useful, and we will use it several times on our way to \cref{thm:derived_RH}.
As first important application, let us prove that $\eta_{\mathrm{RH}}^*$ induces the classical Riemann-Hilbert at least in a special situation:

\begin{prop} \label{prop:comparison_RH}
	Let $X$ be a smooth \canal space and suppose that $X\topl$ is weakly contractible.
	Let $U \in \Stn$ be an ordinary Stein space.
	Then, the map $\eta_{\mathrm{RH}}$ induces a categorical equivalence
	\[ \eta_{\mathrm{RH}}^* \colon \Cohh(U \times X\B) \to \Cohh(U \times X\DR) \]
	that coincides with the classical Riemann-Hilbert correspondence of \cite[Théorème 2.23]{Deligne_Equations_differentielles}.
\end{prop}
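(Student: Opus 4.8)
The plan is to make both sides of the asserted equivalence completely explicit, identify the induced functor $\eta_{\mathrm{RH}}^*$ with Deligne's construction, and only then appeal to \cite[Théorème 2.23]{Deligne_Equations_differentielles} for the equivalence itself. Throughout I read $\Cohh(Z)$, for a stack $Z$, as the category of sections of the heart-valued coherent sheaf stack, so that $\Cohh(U \times X\B) = \Map(U \times X\B, \bfAnCoh^\heartsuit)$ and similarly for $X\DR$; here $U$ is fixed and we let $\pi \colon U \times X \to U$ denote the projection.

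First I would dispatch the Betti side. Since $X\topl$ is weakly contractible, \cref{lem:alternative_Betti_form} gives $X\B(V) \simeq \Map_\cS(V\topl, X\topl) \simeq *$ for every $V \in \dStn$, so $X\B \simeq *$ in $\dAnSt$ and hence $U \times X\B \simeq U$. Therefore $\Cohh(U \times X\B) \simeq \Cohh(U)$, the ordinary coherent sheaves on the Stein space $U$. On the other hand, because the fibres of $\pi$ are contractible a relative local system is constant along $X$, so $\mathrm{Loc}(U \times X / U) \simeq \Cohh(U)$ as well; this identifies the source of $\eta_{\mathrm{RH}}^*$ with the left-hand side of Deligne's equivalence.

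Second, for the target I would exploit the formal-geometric description of $X\DR$. By \cref{cor:De_Rham_analytic_coherent_sheaves}(1), $\Coh^-(U \times X\DR)$ is the totalization $\varprojlim \Coh^-((U\times X)^\bullet / (U \times X\DR))$, and by part (2) together with \cref{lem:De_Rham_analytic} the first face $X \times_{X\DR} X$ is the formal completion $\colim_n \Delta_X^{(n)}$ of the diagonal. As in the algebraic case, a descent datum along the canonical map $p \colon U \times X \to U \times X\DR$ over this formal neighbourhood of the diagonal is exactly a $\pi\inv\cO_U$-linear flat connection (the cocycle condition on the second face encoding flatness); passing to the heart then yields $\Cohh(U \times X\DR) \simeq \Coh^{\mathrm{fc}}(U \times X / U)$. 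Here I must check that taking hearts commutes with this particular totalization, which holds because the relevant face maps are flat and hence $t$-exact.

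Third, I would trace $\eta_{\mathrm{RH}}^*$ through these identifications. By construction $\eta_{\mathrm{RH}}$ is the zig-zag $X\DR \to \Map_\cS((-)\topl, X\topl) \leftarrow X\B$, and precomposing with the canonical map $p$ collapses it to the structure map $U \times X \to U \times X\B \simeq U$. Pulling a coherent sheaf $\cF$ on $U$ through the zig-zag therefore recovers its pullback $\pi^*\cF$ as underlying sheaf, while the descent datum inherited from the Betti side---where $\cF$ is genuinely constant along the contractible fibres---is precisely the canonical flat connection $\nabla_{\mathrm{can}}$. Thus $\eta_{\mathrm{RH}}^*$ becomes $\cF \mapsto (\pi^*\cF, \nabla_{\mathrm{can}})$, which is exactly Deligne's functor sending a relative local system to its associated relatively flat coherent sheaf; \cite[Théorème 2.23]{Deligne_Equations_differentielles} then gives that it is an equivalence. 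I expect the main obstacle to lie in this third step: unwinding $\eta_{\mathrm{RH}}$ through the topological mapping space carefully enough to see that the descent datum it produces is literally $\nabla_{\mathrm{can}}$, so that the functor agrees with Deligne's on the nose and not merely up to isomorphism. The second step is conceptually standard but depends on the $t$-exactness check noted above in order to pass safely between $\Coh^-$ and its heart.
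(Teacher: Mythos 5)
Your outline is essentially the paper's: reduce to showing that $\eta_{\mathrm{RH}}^*$ coincides with Deligne's functor $F$ (since the latter is already known to be a categorical equivalence), use the weak contractibility of $X\topl$ to get $X\B \simeq *$ and hence $\Cohh(U \times X\B) \simeq \Cohh(U)$, and control the De Rham side via the \v{C}ech nerve of $U \times X \to U \times X\DR$ together with the formal completion of the diagonal and the $t$-exactness of the face maps. The one place where you diverge --- and precisely the place you yourself flag as the main obstacle --- is the final comparison. The paper does not attempt to see that the descent datum produced by $\eta_{\mathrm{RH}}^*$ is ``literally'' the canonical flat connection, nor does it pass through the intermediate identification of $\Cohh(U \times X\DR)$ with relatively flat coherent sheaves (an extra Grothendieck-style statement you would have to prove separately). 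Instead it observes that, because $\Cohh(U \times X\DR)$ is a $1$-category, the totalization collapses to the equalizer of $\Cohh(U \times X) \rightrightarrows \Cohh(U \times \widehat{X}_{X \times X})$; it then checks that both $\eta_{\mathrm{RH}}^*$ and $F$, composed with these projections, give the evident pullback $q^*$ along $U \times X \to U$ (respectively its further pullback to the completion), and concludes by the universal property of the equalizer that the two functors into the limit must agree. This dissolves your worry about agreement ``on the nose'': no unwinding of $\eta_{\mathrm{RH}}$ through the topological mapping space is required, only the identification of the composites with $q^*$, which is immediate once $U \times X\B \simeq U$. If you keep your route you must actually carry out the computation of the descent datum you defer; the cleaner fix is to replace your third step by this equalizer argument.
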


\begin{proof}
	Since the classical Riemann-Hilbert map of \cite[Théorème 2.23]{Deligne_Equations_differentielles} is a categorical equivalence, it is sufficient to argue that $\eta_{\mathrm{RH}}^*$ coincides with that map.
	
	Observe that since $X\topl$ is weakly contractible, we have $X\B \simeq *$ and therefore $\eta_{\mathrm{RH}} \colon X\DR \to X\B$ becomes equivalent to the canonical map to the final object of $\dAnSt$.
	As a consequence, we can identify the relative Riemann-Hilbert map $\eta_{\mathrm{RH}} \colon U \times X\DR \to U \times X\B \simeq U$ with the canonical projection
	\[ p \colon U \times X\DR \to U , \]
	and similarly $U \times X \to U \times X\B \simeq U$ is identified with the projection $q \colon U \times X \to U$.
	It follows that the composition
	\[ \Cohh(U) \simeq \Cohh(U \times X\B) \to \Cohh(U \times X\DR) \to \Cohh(U \times X) \]
	is nothing but the pullback
	\[ q^* \colon \Cohh(U) \to \Cohh(U \times X) . \]
	For the same reason, the maps
	\[ \Cohh(U) \simeq \Cohh(U \times X\B) \rightrightarrows \Cohh( U \times \widehat{X}_{X \times X} ) \]
	coincide with the pullbacks along the maps
	\[ U \times \widehat{X}_{X \times X} \rightrightarrows U \times X \to U . \]
	
	On the other side, let us denote by
	\[ F \colon \Cohh(U \times X\B) \to \Cohh(U \times X\DR) \]
	the functor realizing the classical Riemann-Hilbert correspondence.
	The construction of this functor in \cite[Théorème 2.23]{Deligne_Equations_differentielles} makes it clear that the composition
	\[ \Cohh(U) \simeq \Cohh(U \times X\B) \xrightarrow{F} \Cohh(U \times X\DR) \to \Cohh(U \times X) \]
	coincides with the pullback functor $q^* \colon \Cohh(U) \to \Cohh(U \times X)$, and that the same remains true when we further pullback to $\widehat{X}_{X \times X}$.
	Finally, since $\Cohh(U \times X\DR)$ is a $1$-category, we see that
	\[ \Cohh(U \times X\DR) \simeq \mathrm{eq} \left( \Cohh(U \times X) \rightrightarrows \Cohh(U \times \widehat{X}_{X \times X}) \right) . \]
	The universal property of equalizers therefore implies that $\eta_{\mathrm{RH}}^*$ coincides with $F$.
\end{proof}

\begin{rem}
	We expect the above comparison to remain true also when $X\topl$ is not weakly contractible.
	However, the proof is bound to be more elaborated and possibly dependent on the choice of a basepoint on $X$.
	In this paper, we will only need the comparison at the level of generality we stated.
\end{rem}

We were able to exploit the descriptions provided by \cref{cor:De_Rham_analytic_coherent_sheaves} in order to prove \cref{prop:comparison_RH}.
In what follows, we will also need to manipulate $\Coh^-(X \times_{X\DR} X )$.
In this case, the description provided by \cref{cor:De_Rham_analytic_coherent_sheaves} is not entirely sufficient.
Our second goal in this section is to provide a different one.

Consider $X$ as a structured topos, $X = (\cX, \cO_X)$.
The diagonal
\[ \delta \colon X \to X \times X \]
induces a geometric morphism of $\infty$-topoi
\[ \delta\inv \colon \cX \times \cX \leftrightarrows \cX \colon \delta_* . \]
In particular, we get a sheaf of commutative rings $\delta\inv \cO_{X \times X}$ on $\cX$.
This sheaf of rings comes equipped with a sheaf of ideals $\cJ$, with the property that
\[ \delta\inv \cO_{X \times X} / \cJ \simeq \cO_X . \]
We define a new structured topos
\[ \widehat{X} \coloneqq \left(\cX, \varprojlim_n \delta\inv \cO_{X \times X} / \cJ^n \right) . \]
We will simply denote the structure sheaf of $\widehat{X}$ by $\cO_{\widehat{X}}$.
Associated to this structured topos there is a stable $\infty$-category
\[ \Coh^-(\widehat{X}) , \]
that is formally defined as the full subcategory of $\cO_{\widehat{X}} \Mod$ spanned by those sheaves that are cohomologically bounded above and whose cohomology sheaves are locally of finite presentation (this definition is correct because $\cO_{\widehat{X}}$ is coherent, see \cite[Proposition VI.2.5]{Banica_Stanisla}).

The category $\Coh^-(\widehat{X})$ admits a canonical $t$-structure.
Moreover, the morphism $\delta\inv \cO_{X \times X} \to \cO_{\widehat{X}}$ is flat, as follows for example from \cite[Lemma VI.2.2]{Banica_Stanisla} and the classical results on completions of noetherian rings.
Note that we are implicitly invoking \cite[7.3.4.9]{HTT} in order to be able to work on Stein compact subsets of $X$.
As a consequence, we obtain:

\begin{lem} \label{lem:transition_maps_t_exact}
	The canonical maps
	\[ \Coh^-(X) \rightrightarrows \Coh^-(\widehat{X}) \]
	are $t$-exact.
\end{lem}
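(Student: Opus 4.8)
The plan is to reduce the claimed $t$-exactness to the flatness of the two structure morphisms $\cO_X \rightrightarrows \cO_{\widehat X}$, and then to invoke the standard fact that derived pullback along a flat morphism is $t$-exact. First I would pin down the two functors in the statement: they are the pullback functors
\[ p_1^*, p_2^* \colon \Coh^-(X) \to \Coh^-(\widehat X) \]
attached to the two ``projections'' $p_1, p_2 \colon \widehat X \to X$, each obtained by restricting the projection $X \times X \to X$ to the formal neighbourhood $\widehat X$ of the diagonal. Concretely, $p_i^*$ is the derived base change $(-) \otimes^{\mathbb L}_{\cO_X} \cO_{\widehat X}$ along the ring map $u_i \colon \cO_X \to \cO_{\widehat X}$ induced by $p_i$. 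Since $- \otimes^{\mathbb L}_{\cO_X} \cO_{\widehat X}$ is always right $t$-exact, it suffices to prove that each $u_i$ is flat: flatness then forces $\otimes^{\mathbb L} = \otimes$, which is exact on the hearts and hence $t$-exact, and the functor preserves $\Coh^-$ because $\cO_{\widehat X}$ is coherent.

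To establish flatness, I would factor each $u_i$ through the restricted structure sheaf of $X \times X$,
\[ \cO_X \xrightarrow{\ u_i^0\ } \delta\inv \cO_{X \times X} \to \cO_{\widehat X} , \]
where $u_i^0$ is induced by the projection $p_i \colon X \times X \to X$ (using $p_i \circ \delta = \id_X$, so that $\delta\inv p_i\inv \cO_X \simeq \cO_X$) and the second arrow is the completion map. The completion map is flat by the discussion preceding the lemma, via \cite[Lemma VI.2.2]{Banica_Stanisla} and the classical results on completions of noetherian rings. For $u_i^0$, I would use that $X$ is smooth, so $p_i$ is a submersion and $\cO_{X \times X}$ is flat over $p_i\inv \cO_X$; applying the exact inverse image functor $\delta\inv$ preserves this flatness, yielding flatness of $u_i^0$. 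Composing two flat maps shows that each $u_i$ is flat. One may also observe that the swap involution of $X \times X$ fixes both the diagonal and the ideal $\cJ$, hence lifts to an involution of $\widehat X$ exchanging $p_1$ and $p_2$; this reduces the two cases to a single one.

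The only genuine subtlety, already flagged in the setup, is that Stein spaces are not compact, so the flatness of the completion map — and hence of the $u_i$ — must be checked locally, on Stein compact subsets, where one passes to the relevant noetherian local rings of convergent power series; this is precisely where \cite[7.3.4.9]{HTT} is invoked. I expect this local-to-global bookkeeping to be the main technical point, whereas the algebraic heart of the argument (flat base change is $t$-exact) is routine. Once flatness is verified at the level of these local rings, the $t$-exactness of both $p_1^*$ and $p_2^*$ is immediate, which completes the proof.
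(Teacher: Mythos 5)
Your argument is essentially identical to the paper's: the proof there also reduces $t$-exactness to flatness of the composites $\cO_X \rightrightarrows \delta\inv\cO_{X\times X} \to \cO_{\widehat X}$, citing the flatness of the completion map from \cite[Lemma VI.2.2]{Banica_Stanisla} together with the smoothness of $X$ for the first two arrows, and the same reduction to Stein compacts via \cite[7.3.4.9]{HTT}. Your write-up merely spells out more of the routine details (right $t$-exactness of derived tensor, the swap involution) that the paper leaves implicit.
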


\begin{proof}
	In virtue of the above considerations, it is enough to observe that, since $X$ is smooth, the two maps $\cO_X \rightrightarrows \delta\inv \cO_{X \times X}$ are flat.
\end{proof}

The second key result of this section is the following analogue (of a particular case of) formal GAGA:

\begin{prop} \label{prop:key_description_coherent_sheaves_formal_completion}
	There is an equivalence of stable $\infty$-categories
	\[ \Coh^-(\widehat{X}) \simeq \Coh^-(X \times_{X\DR} X ) . \]
\end{prop}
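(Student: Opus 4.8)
The goal is to identify $\Coh^-(\widehat X)$ with $\Coh^-(X \times_{X\DR} X)$. The plan is to play off the two descriptions we now have available. On one side, \cref{cor:De_Rham_analytic_coherent_sheaves}(2) gives
\[
  \Coh^-(X \times_{X\DR} X) \simeq \varprojlim_n \Coh^-(\Delta_X^{(n)}),
\]
where $\Delta_X^{(n)}$ is the $n$-th infinitesimal neighbourhood of the diagonal. On the other side, by its very construction $\widehat X = (\cX, \varprojlim_n \delta\inv\cO_{X\times X}/\cJ^n)$ is the ``formal'' object whose structure sheaf is the completion along the diagonal, so one expects a natural functor $\Coh^-(\widehat X) \to \varprojlim_n \Coh^-(\Delta_X^{(n)})$ obtained by restricting a coherent sheaf on the completion to each infinitesimal neighbourhood. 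The core of the proof is to show this comparison functor is an equivalence.

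\textbf{Key steps.} First I would construct the comparison functor explicitly: the structured topos $\Delta_X^{(n)}$ has structure sheaf $\delta\inv\cO_{X\times X}/\cJ^{n+1}$, and the surjections $\cO_{\widehat X} \to \delta\inv\cO_{X\times X}/\cJ^{n+1}$ furnish base-change functors $\Coh^-(\widehat X) \to \Coh^-(\Delta_X^{(n)})$ compatible with the transition maps, hence a functor to the limit. Second, I would verify essential surjectivity and full faithfulness by reducing to the heart. Since $\cO_{\widehat X}$ is coherent (\cite[Proposition VI.2.5]{Banica_Stanisla}) and the map $\delta\inv\cO_{X\times X}\to\cO_{\widehat X}$ is flat (as noted before \cref{lem:transition_maps_t_exact}), the $t$-structure on $\Coh^-(\widehat X)$ is well behaved and the comparison functor is $t$-exact. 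One then checks that on hearts, $\Cohh(\widehat X) \simeq \varprojlim_n \Cohh(\Delta_X^{(n)})$, which is the statement that a coherent module over the completed local ring is the same as a compatible system of coherent modules over the finite quotients. This is precisely the content of the theory of coherent modules on formal completions in the Stein setting, where one works on Stein \emph{compact} subsets (invoking \cite[7.3.4.9]{HTT} as flagged in the text) and applies the Artin–Rees–type results of \cite[Chapter VI]{Banica_Stanisla}. Third, having the equivalence on hearts together with $t$-exactness, I would bootstrap to all of $\Coh^-$ by the standard dévissage argument (the strategy of reduction to the heart, \cite[\S 6]{DAG-VII}): both sides are left-complete for their $t$-structures with the same heart, so a $t$-exact functor inducing an equivalence on hearts is an equivalence.

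\textbf{The main obstacle.} The delicate point is the passage at the level of hearts, i.e.\ showing $\Cohh(\widehat X) \simeq \varprojlim_n \Cohh(\Delta_X^{(n)})$. Algebraically over a noetherian local ring this is elementary, but here the completion is a completion of \emph{sheaves of rings} along the diagonal of a non-compact Stein space, so naively one must control the interplay of the inverse limit defining $\cO_{\widehat X}$ with the sections functor. The resolution, as the surrounding discussion indicates, is to localize to Stein compact subsets $K \subset X$, where Cartan's Theorems A and B apply and the local rings are noetherian with the expected completion behaviour, and then use the coherence of $\cO_{\widehat X}$ to glue. The flatness of $\cO_X \rightrightarrows \delta\inv\cO_{X\times X}$ established in \cref{lem:transition_maps_t_exact} ensures the transition maps in the tower are exact, so that the $\varprojlim$ is computed without higher derived corrections, which is what makes the reduction to the heart legitimate. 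I expect the verification that higher $\varprojlim^1$ terms vanish — equivalently, that the relevant Mittag–Leffler condition holds for the inverse system of coherent sheaves — to be the technical heart of this step, and it is exactly there that the analytic GAGA input of \cite{Banica_Stanisla} is indispensable.
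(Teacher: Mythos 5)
Your setup is the same as the paper's: reduce via \cref{cor:De_Rham_analytic_coherent_sheaves}(2) to comparing $\Coh^-(\widehat{X})$ with $\varprojlim_n \Coh^-(\Delta_X^{(n)})$, build the comparison functor from the surjections $\cO_{\widehat{X}} \to \delta\inv\cO_{X\times X}/\cJ^n$, and localize to Stein compacts using \cite[7.3.4.9]{HTT} together with the coherence and flatness results of \cite{Banica_Stanisla}. The gap is in your final d\'evissage step. The principle you invoke --- that a $t$-exact functor between suitably complete stable $\infty$-categories inducing an equivalence on hearts is an equivalence --- is false in general: this is exactly the situation of the realization functor $D^{\mathrm b}(\cA) \to \cC$ for a bounded $t$-structure with heart $\cA$, which is $t$-exact and an equivalence on hearts but is an equivalence only when the Yoneda $\Ext$-groups of $\cA$ compute the mapping spaces of $\cC$. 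What you actually need, besides essential surjectivity onto the heart, is full faithfulness \emph{in the derived sense} on the heart, i.e.\ that the comparison induces isomorphisms on all $\Ext^i$ between objects in degree $0$ (this is the form in which reduction to the heart is used elsewhere in the paper, cf.\ \cite[Lemma 7.13]{Hennion_Formal_gluing} in the proof of \cref{prop:RH_almost_perfect_modules_ordinary_base}). Concretely, you must show that $\Ext$-groups over the completion agree with the limit of $\Ext$-groups over the quotients $\delta\inv\cO_{X\times X}/\cJ^n$; the Mittag--Leffler/$\varprojlim^1$ analysis you flag only controls the heart-level statement and does not give this.

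The paper sidesteps the issue: after localizing to a Stein compact it passes to global sections, identifies $\Coh^-$ of each structured space with almost perfect modules over the noetherian ring of global sections, uses \cite[Lemma VI.2.2]{Banica_Stanisla} to identify $\Gamma(X;\cO_{\widehat{X}})$ with $\varprojlim_n \Gamma(X;\delta\inv\cO_{X\times X}/\cJ^n)$, and then quotes the derived formal GAGA theorem for almost perfect modules \cite[Theorem 5.3.2]{DAG-XII}, which packages precisely the higher-$\Ext$ comparison your argument is missing. If you want to keep a sheaf-level d\'evissage you would have to reprove that input by hand.
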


\begin{proof}
	In virtue of \cref{cor:De_Rham_analytic_coherent_sheaves} it is enough to produce an equivalence of $\Coh^-(\widehat{X})$ with
	\[ \varprojlim_n \Coh^-(\Delta^{(n)}_X) \simeq \varprojlim_n \Coh^-(\cX, \delta\inv \cO_{X \times X} / \cJ^n ) . \]
	Since both sides are sheaves with respect to the analytic topology on $X$, we can invoke \cite[7.3.4.9]{HTT} in order to reduce ourselves to check that the map is an equivalence when $X$ is replaced by a Stein compact in $X$.
	In this case, $(\cX, \cO_{X \times X} / \cJ^n)$ is again a Stein compact, and therefore we obtain equivalences
	\[ \Coh^-(\Delta^{(n)}_X) \simeq \Coh^-(\Gamma(X; \delta\inv \cO_{X \times X} / \cJ^n)) . \]
	Similarly,
	\[ \Coh^-(\widehat{X}) \simeq \Coh^-(\Gamma(X; \cO_{\widehat{X}})) . \]
	Since \cite[Lemma VI.2.2]{Banica_Stanisla} implies that
	\[ \Gamma(X; \cO_{\widehat{X}}) \simeq \varprojlim_n \Gamma(X; \delta\inv \cO_{X \times X} / \cJ) , \]
	the proposition is now a consequence of the usual formal GAGA theorem for almost perfect modules (see for example \cite[Theorem 5.3.2]{DAG-XII}).
\end{proof}

\section{$t$-structures}

Using the results of the previous section we easily get the following result:

\begin{prop} \label{prop:t-structure_relative_crystals}
	Let $X$ be a smooth analytic space and let $p \colon X \to X\DR$ be the canonical map.
	Let $X^\bullet / X\DR$ denote the \v{C}ech nerve of $p$.
	Then for any $V \in \dStn$, the face maps in the diagram
	\[ \Coh^-(V \times (X^\bullet / X\DR)) \colon \mathbf \Delta \to \Catst \]
	are $t$-exact.
	In particular, for any $V \in \dStn$, there is a $t$-structure on
	\[ \Coh^-(V \times X\DR) \simeq \varprojlim \Coh^-(V \times (X^\bullet / X\DR)) \]
	characterized by the property that the forgetful functor
	\[ \Coh^-(V \times X\DR) \to \Coh^-(V \times X) \]
	is $t$-exact.
\end{prop}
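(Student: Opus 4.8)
The plan is to treat the statement in two stages: first prove the $t$-exactness of the coface maps, which is the geometric heart of the matter, and then deduce formally the $t$-structure on the totalization. Throughout write $S \coloneqq X\DR$, $X_n \coloneqq X^{\times_S(n+1)}$ for the terms of the \v{C}ech nerve, and $\cC^n \coloneqq \Coh^-(V \times X_n)$ with its standard $t$-structure; by descent for $\Coh^-$ along the effective epimorphism $\id_V \times p \colon V \times X \to V \times X\DR$ (\cref{lem:De_Rham_analytic}), one has $\cC \coloneqq \Coh^-(V \times X\DR) \simeq \varprojlim_{\mathbf{\Delta}} \cC^\bullet$, as in the statement. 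A coface $d^i$ of the cosimplicial diagram is the pullback functor $(\id_V \times d_i)^*$ along the simplicial face $d_i \colon X_n \to X_{n-1}$, the projection omitting the $i$-th factor. Since pullback along a flat morphism is $t$-exact on $\Coh^-$, it suffices to show each $\id_V \times d_i$ is flat, and by stability of flatness under products with the fixed base $V$ this reduces to the flatness of the $d_i$ themselves.

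For the base case $n=1$ this is exactly \cref{lem:transition_maps_t_exact}: the two projections $\widehat X = X \times_S X \rightrightarrows X$ are flat because $X$ is smooth, so that $\cO_X \rightrightarrows \delta\inv \cO_{X\times X}$ is flat. The higher face maps are handled by the same reasoning: extending \cref{prop:key_description_coherent_sheaves_formal_completion} (and using the infinitesimal-neighbourhood description of \cref{lem:De_Rham_analytic}\,(2) together with the unramifiedness \cref{thm:unramifiedness} to control the fiber products), one identifies $X_n$ with the formal completion of $X^{n+1}$ along the small diagonal and $d_i$ with the completion of the projection $X^{n+1} \to X^n$ forgetting a coordinate; this projection is flat because $X$ is smooth, and completion along the compatible diagonals preserves flatness over the relevant noetherian rings. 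Hence every $d_i$, and so every coface $d^i$, is $t$-exact.

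It remains to produce the $t$-structure on $\cC = \varprojlim_{\mathbf{\Delta}} \cC^\bullet$. Since $p$ is an effective epimorphism, the forgetful functor $\mathrm{ev}^0 = p^* \colon \cC \to \cE \coloneqq \Coh^-(V \times X)$ is conservative, so I would define $\cC^{\le 0}$ and $\cC^{\ge 0}$ to be the preimages under $\mathrm{ev}^0$ of $\cE^{\le 0}$ and $\cE^{\ge 0}$. For any section $M = (M_n)$ of the diagram, the vertex inclusion $[0] \to [n]$ is a composite of cofaces and its structure functor is therefore $t$-exact, giving $M_n \simeq (\text{coface composite})(M_0)$; thus $M_0 \in \cE^{\le 0}$ (resp.\ $\cE^{\ge 0}$) forces every $M_n$ into $\cC^{n,\le 0}$ (resp.\ $\cC^{n,\ge 0}$). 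Orthogonality of $\cC^{\le 0}$ and $\cC^{\ge 1}$ then follows from $\Map_\cC(M,N) \simeq \varprojlim_n \Map_{\cC^n}(M_n, N_n)$ together with the levelwise orthogonality of the $t$-structures on the $\cC^n$. Finally $\mathrm{ev}^0$ is $t$-exact by construction.

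The step I expect to require the most care is the construction of the truncation functors, which I would perform levelwise by $(\tau^{\le 0} M)_n \coloneqq \tau^{\le 0}(M_n)$ and must show assembles into a section of $\cC^\bullet$. Compatibility with cofaces is immediate, since those functors are $t$-exact and hence commute with $\tau^{\le 0}$; the apparent obstacle is the codegeneracies $s^i \colon \cC^{n+1} \to \cC^n$, which are pullbacks along the diagonal thickenings $X_n \to X_{n+1}$, are only right $t$-exact, and do not commute with $\tau^{\le 0}$ in general. This is resolved by the observation that each codegeneracy admits a $t$-exact section: there is a coface $d^j$ with $s^i \circ d^j = \id$ (coming from $\sigma^i \delta^j = \id$ in $\mathbf{\Delta}$), so writing $M_{n+1} \simeq d^j(M_n)$ and using that $d^j$ is $t$-exact gives $s^i(\tau^{\le 0} M_{n+1}) \simeq s^i d^j(\tau^{\le 0} M_n) \simeq \tau^{\le 0} M_n$, exactly the required compatibility. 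With the truncation functors in hand, the defining fiber sequences $\tau^{\le 0} M \to M \to \tau^{\ge 1} M$ hold levelwise and therefore in the limit $\cC$, completing the verification of the $t$-structure.
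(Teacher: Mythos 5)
Your argument is correct and follows essentially the paper's route: the $t$-exactness of the face maps is obtained, exactly as in the paper, by identifying the terms of the \v{C}ech nerve with formal completions along the diagonals (the relative version of \cref{prop:key_description_coherent_sheaves_formal_completion}) and deducing flatness of the projections from the smoothness of $X$ as in \cref{lem:transition_maps_t_exact}. The only divergence is that for transporting the $t$-structure to the totalization the paper simply cites \cite[Lemma 3.20]{Hennion_Formal_gluing}, whereas you reprove that lemma by hand; your treatment of the codegeneracies via the retractions $s^i \circ d^j = \id$ is valid, though it can be bypassed entirely by computing the limit over the (initial) subcategory of injective maps of $\mathbf{\Delta}$, where only the $t$-exact cofaces appear.
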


\begin{proof}
	Using \cite[7.3.4.9]{HTT} in order to work over noetherian Stein compacts inside $V$, we can prove the following relative version of \cref{prop:key_description_coherent_sheaves_formal_completion}:
	\[ \Coh^-(V \times (X \times_{X\DR} X)) \simeq \Coh^-(V \times \widehat{X}) , \]
	as well as a version for completions along higher diagonals:
	\[ \Coh^-(V \times (X^n / X\DR)) \simeq \Coh^-(V \times \widehat{X}_{X^n}) . \]
	Since the face maps in the diagram $\widehat{X}_{X^\bullet}$ are flat, we conclude that all the face maps in the diagram
	\[ \Coh^-(V \times (X^\bullet / X\DR)) \]
	are $t$-exact, just as in \cref{lem:transition_maps_t_exact}.
	In particular, invoking \cite[Lemma 3.20]{Hennion_Formal_gluing}, we obtain a canonical $t$-structure on
	\[ \Coh^-(V \times X\DR) \simeq \varprojlim \Coh^-(V \times (X^\bullet / X\DR)) \]
	having the required properties.
\end{proof}

\begin{prop} \label{prop:t_structure_Betti_form}
	For any $V \in \dStn$ there is a $t$-structure on $\Coh^-(V \times X\B)$ with the property that the forgetful functor
	\[ \Coh^-(V \times X\B) \to \Coh^-(V \times X) \]
	is $t$-exact.
\end{prop}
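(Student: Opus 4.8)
The plan is to reduce the existence of a $t$-structure on $\Coh^-(V \times X\B)$ to a situation where descent along a hypercover lets us transport the $t$-structure from the pieces, exactly paralleling the argument of \cref{prop:t-structure_relative_crystals}. The key observation is that the Betti form $X\B$ is a \emph{constant} stack associated to the topological space $X\topl$, and hence, just as $X\DR$ was resolved by the \v{C}ech nerve of $p \colon X \to X\DR$, the stack $X\B$ admits a resolution that is local in nature and whose pieces are (derived) Stein spaces whose coherent categories already carry the standard $t$-structure.

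First I would use \cref{lem:alternative_Betti_form} to identify $X\B$ with the functor $\Map_\cS((-)\topl, X\topl)$, together with the fact from the proof of that lemma that every $V \in \dStn$ admits an hypercover by objects $U \in \dStn^{\mathrm{wc}}$ on which $X\B$ becomes constant with value $X\topl$. I would then choose a cover of $X\topl$ by weakly contractible opens, so that locally $X\B \simeq *$ and $\Coh^-(V \times X\B) \simeq \Coh^-(V)$, which visibly carries a $t$-structure for which the forgetful functor to $\Coh^-(V \times X) \simeq \Coh^-(V \times X)$ — here the projection $X \to X\B \simeq *$ — is $t$-exact. The content is then to glue these local $t$-structures: writing $X\B$ as a colimit (in $\dAnSt$) of a diagram of weakly contractible pieces, $\Coh^-(V \times X\B)$ becomes a limit
\[ \Coh^-(V \times X\B) \simeq \varprojlim \Coh^-(V \times X\B^\bullet) \]
over the corresponding \v{C}ech/hypercover diagram, exactly as in the two preceding propositions.

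To apply \cite[Lemma 3.20]{Hennion_Formal_gluing} and obtain the $t$-structure on this limit, I must check that the face maps in the diagram are $t$-exact. Since on weakly contractible opens the relevant transition maps are pullbacks along projections of the form $V \times W \to V$ with $W$ a product of contractible Stein factors, these are flat morphisms, and flatness gives $t$-exactness just as in \cref{lem:transition_maps_t_exact}. The forgetful functor $\Coh^-(V \times X\B) \to \Coh^-(V \times X)$ is compatible with these local descriptions — over a weakly contractible piece it is the pullback $q^* \colon \Coh^-(V) \to \Coh^-(V \times W)$ along the projection, which is $t$-exact — so the characterizing property that this forgetful functor be $t$-exact is built in, exactly as in \cref{prop:t-structure_relative_crystals}.

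The main obstacle I anticipate is verifying that the diagram computing $\Coh^-(V \times X\B)$ as a limit genuinely consists of $t$-exact face maps over the \emph{whole} of $V$, rather than only after restricting to Stein compacts. As in the earlier proofs, I would handle this by invoking \cite[7.3.4.9]{HTT} to reduce the $t$-exactness check to noetherian Stein compacts inside $V$, where the pullback-along-projection description makes flatness and hence $t$-exactness transparent. The only genuinely new point relative to \cref{prop:t-structure_relative_crystals} is that here the resolution of $X\B$ comes from the topology of $X\topl$ (weakly contractible opens) rather than from the infinitesimal/formal geometry of $X\DR$; once the hypercover is in place, the descent-of-$t$-structures machinery applies verbatim.
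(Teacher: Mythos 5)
Your proposal is correct and follows essentially the same route as the paper: the paper simply observes that $\Coh^-(V \times X\B) \simeq \Map_{\Cat_\infty}(X\topl, \Coh^-(V))$, so the $t$-structure is induced pointwise from $\Coh^-(V)$, and then checks $t$-exactness of the forgetful functor locally on $X$, where it becomes pullback along the flat (smooth) projection $V \times X \to V$ --- exactly your final step. Your hypercover-by-weakly-contractibles plus \cite[Lemma 3.20]{Hennion_Formal_gluing} packaging computes the same limit; note only that the face maps in that Betti diagram are equivalences between copies of $\Coh^-(V)$ (hence trivially $t$-exact), so the flatness of projections $V \times W \to V$ is not what is needed there --- it is needed, as you correctly use it, only for the forgetful functor to $\Coh^-(V \times X)$.
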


\begin{proof}
	We know that
	\[ \Coh^-(V \times X\B) \simeq \Map_{\dAnSt}(V \times X\B, \bfAnCoh^-) \simeq \Map_{\Cat_\infty}(X\topl, \Coh^-(V)) . \]
	In particular, the $t$-structure on $\Coh^-(V)$ induces a $t$-structure on $\Coh^-(V \times X\B)$.
	We now claim that the forgetful functor
	\[ \Coh^-(V \times X\B) \to \Coh^-(V \times X) \]
	is $t$-exact.
	This property is local on $X$.
	We can therefore assume that $X\topl$ is contractible and that $X$ is Stein.
	Therefore $V \times X\B \simeq V$ and the canonical map $V \times X \to V \times X\B \simeq V$ can be identified with the projection.
	Since $X$ is smooth, this map is flat.
	This completes the proof.
\end{proof}

\begin{cor} \label{cor:De_Rham_Betti_t_exact}
	Let $X$ be a smooth analytic space.
	For any $V \in \dStn$ the canonical map
	\[ \Coh^-(V \times X\B) \to \Coh^-(V \times X\DR) \]
	is $t$-exact.
\end{cor}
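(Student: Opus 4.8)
The plan is to prove $t$-exactness by comparing the two $t$-structures of \cref{prop:t-structure_relative_crystals} and \cref{prop:t_structure_Betti_form}, both of which are pinned down by $t$-exactness of a forgetful functor to $\Coh^-(V \times X)$. The canonical map $X \to X\B$ factors as $X \to X\DR \xrightarrow{\eta_{\mathrm{RH}}} X\B$, so after applying $\Coh^-$ (and crossing with $V$) we obtain a commuting triangle of pullback functors
\[ \Coh^-(V \times X\B) \xrightarrow{F} \Coh^-(V \times X\DR) \xrightarrow{G} \Coh^-(V \times X) , \]
where $F = \eta_{\mathrm{RH}}^*$ is the map whose $t$-exactness we must establish and $G$ is pullback along $p \colon X \to X\DR$. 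By construction the composite $G \circ F$ is the forgetful functor of \cref{prop:t_structure_Betti_form}.

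First I would record two facts that are immediate from the cited propositions: $G$ is $t$-exact by the very characterization of the $t$-structure on $\Coh^-(V \times X\DR)$ in \cref{prop:t-structure_relative_crystals}, and $G \circ F$ is $t$-exact by \cref{prop:t_structure_Betti_form}. The only additional input is that $G$ is conservative. Since $p \colon X \to X\DR$ is an effective epimorphism by \cref{lem:De_Rham_analytic} and $\bfAnCoh^-$ satisfies descent, the totalization $\Coh^-(V \times X\DR) \simeq \varprojlim \Coh^-(V \times (X^\bullet / X\DR))$ exhibits $G$ as the comonadic forgetful functor to cosimplicial degree zero, and such functors are conservative; equivalently, pullback along an effective epimorphism detects equivalences.

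Next I would invoke the elementary fact that a conservative $t$-exact functor reflects the $t$-structure. Indeed, if $N \in \Coh^-(V \times X\DR)$ has $G(N) \in \Coh^-(V \times X)^{\ge 0}$, then $G(\tau^{\le -1} N) \simeq \tau^{\le -1} G(N) \simeq 0$ by $t$-exactness of $G$, whence $\tau^{\le -1} N \simeq 0$ by conservativity, so $N \in \Coh^-(V \times X\DR)^{\ge 0}$; the dual argument with $\tau^{\ge 1}$ handles coconnectivity. Thus $G$ detects both halves of the $t$-structure.

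Finally the conclusion is formal. For $M \in \Coh^-(V \times X\B)^{\le 0}$ we have $G(F(M)) = (G \circ F)(M) \in \Coh^-(V \times X)^{\le 0}$ by $t$-exactness of $G \circ F$, and since $G$ reflects the connective part we get $F(M) \in \Coh^-(V \times X\DR)^{\le 0}$; the identical reasoning on $\Coh^-(V \times X\B)^{\ge 0}$ shows $F$ preserves coconnective objects, so $F = \eta_{\mathrm{RH}}^*$ is $t$-exact. I expect the only point requiring genuine care to be the conservativity of $G$, namely checking that the equivalence of \cref{prop:t-structure_relative_crystals} identifies $G$ with evaluation at cosimplicial degree zero and that effective descent along $p$ makes this forgetful functor conservative; everything else is a routine manipulation of truncation functors.
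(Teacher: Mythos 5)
Your proof is correct and follows essentially the same route as the paper: both reduce to the $t$-exactness of the composite $\Coh^-(V \times X\B) \to \Coh^-(V \times X\DR) \to \Coh^-(V \times X)$, which is \cref{prop:t_structure_Betti_form}, using that the $t$-structure on $\Coh^-(V \times X\DR)$ is detected by the forgetful functor to $\Coh^-(V \times X)$. The paper leaves that detection step implicit in the word ``characterized'' of \cref{prop:t-structure_relative_crystals}, whereas you justify it explicitly via conservativity of evaluation at cosimplicial degree zero --- a worthwhile clarification, but not a different argument.
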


\begin{proof}
	As a consequence of \cref{prop:t-structure_relative_crystals}, we see that it is enough to check that the composite
	\[ \Coh^-(V \times X\B) \to \Coh^-(V \times X\DR) \to \Coh^-(V \times X) \]
	is $t$-exact.
	This is guaranteed by \cref{prop:t_structure_Betti_form}.
\end{proof}

We can now take the first step toward the generalization of the Riemann-Hilbert correspondence we are concerned with in this paper:

\begin{prop} \label{prop:RH_almost_perfect_modules_ordinary_base}
	Let $X$ be a smooth analytic space and let $U \in \Stn$ be an ordinary Stein space.
	The map $\eta_{\mathrm{RH}} \colon X\DR \to X\B$ induces an equivalence of stable $\infty$-categories:
	\[ \eta_{\mathrm{RH}}^* \colon \Coh^-(U \times X\B) \to \Coh^-(U \times X\DR) . \]
\end{prop}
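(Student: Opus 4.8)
The plan is to reduce the statement to an equivalence on the hearts of the two $t$-structures, and there to invoke the classical Riemann--Hilbert correspondence. For the $t$-structures furnished by \cref{prop:t_structure_Betti_form} and \cref{prop:t-structure_relative_crystals}, the functor $\eta_{\mathrm{RH}}^*$ coincides with the canonical map and is therefore $t$-exact by \cref{cor:De_Rham_Betti_t_exact}. The main structural input I would establish first is a bootstrapping principle: a $t$-exact functor between $\Coh^-(U \times X\B)$ and $\Coh^-(U \times X\DR)$ that restricts to an equivalence on hearts is automatically an equivalence. The point is that objects of $\Coh^-$ are cohomologically bounded above with coherent cohomology, so each one is recovered as a limit $\cF \simeq \varprojlim_n \tau^{\ge -n}\cF$ of its bounded truncations, and each truncation is a finite iterated extension of shifts of objects of the heart. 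Since $\eta_{\mathrm{RH}}^*$ commutes with truncations and preserves fiber sequences, an induction on the cohomological amplitude shows it is an equivalence on bounded objects; left-completeness of both $t$-structures then upgrades full faithfulness to all of $\Coh^-$ by writing $\Map(\cF,\cG) \simeq \varprojlim_n \Map(\cF, \tau^{\ge -n}\cG)$ on both sides, and an analogous limiting argument yields essential surjectivity.

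It then remains to prove that $\eta_{\mathrm{RH}}^*$ is an equivalence on hearts. Here I would use that the hearts are \emph{local on $X$}: via the identification $\Coh^-(U \times X\B) \simeq \Map_{\Cat_\infty}(X\topl, \Coh^-(U))$ of \cref{prop:t_structure_Betti_form}, the heart of the source is the category of local systems on $X\topl$ valued in $\Cohh(U)$, while the heart of $\Coh^-(U \times X\DR)$ is the category of coherent sheaves on $U \times X$ equipped with a $p\inv\cO_U$-linear flat connection relative to $U$. Both assignments are sheaves for the analytic topology on $X$, since local systems and flat connections glue, and $\eta_{\mathrm{RH}}$ restricts to a morphism of such sheaves on hearts. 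Consequently, checking that this morphism is an equivalence may be done locally on $X$, so that I may assume $X\topl$ is weakly contractible.

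Under this assumption the desired conclusion is exactly \cref{prop:comparison_RH}, which identifies the induced map on hearts with the classical Riemann--Hilbert correspondence of \cite[Théorème 2.23]{Deligne_Equations_differentielles}; the latter is an equivalence of abelian categories. Combined with the bootstrapping principle of the first paragraph, this completes the proof.

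I expect the bootstrapping step to be the main obstacle. While $t$-exactness makes compatibility with each individual truncation formal, one must check carefully that the Postnikov-type limit $\varprojlim_n \tau^{\ge -n}(-)$ computes objects of $\Coh^-$ on \emph{both} sides, i.e.\ that both $t$-structures are left-complete and that the limit preserves the coherence and bounded-above conditions; and, for essential surjectivity, that $\eta_{\mathrm{RH}}^*$ genuinely commutes with this limit rather than merely with each finite truncation. The remaining, comparatively minor, delicate point is verifying that the two hearts really do satisfy analytic descent along $X$, so that the reduction to the weakly contractible case is legitimate; this is precisely where the local nature of local systems and of flat connections is used.
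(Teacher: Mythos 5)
Your overall architecture (use $t$-exactness to reduce to the hearts, then quote Deligne via \cref{prop:comparison_RH}) matches the paper's strategy, but the ``bootstrapping principle'' you rely on is false as stated, and the gap it hides is exactly where the paper invests its real analytic work. A $t$-exact functor that restricts to an equivalence of \emph{abelian categories} on the hearts need not be an equivalence of stable $\infty$-categories: your d\'evissage by cohomological amplitude reduces everything to the maps $\Map(\cF,\cG[i])\to\Map(\eta_{\mathrm{RH}}^{*}\cF,\eta_{\mathrm{RH}}^{*}\cG[i])$ for $\cF,\cG$ in the heart and all $i\ge 0$, i.e.\ to a comparison of all the groups $\Ext^{i}(\cF,\cG)$ computed in the \emph{ambient} stable categories, and the classical equivalence of hearts only controls $i=0$. (As an abstract counterexample to your principle, the $t$-exact inclusion of $\cD^{b}$ of $\C$-vector spaces into local systems on $S^{2}$ is an equivalence on hearts, yet $\Ext^{2}(\C,\C)\simeq\rH^{2}(S^{2};\C)\neq 0$ on the target.) This is why the paper, after the same reduction to the case where $X\topl$ is weakly contractible, must separately prove that $\eta_{\mathrm{RH}}^{*}$ is fully faithful \emph{in the derived sense} on $\Cohh(U\times X\B)$ before it can apply \cite[Lemma 7.13]{Hennion_Formal_gluing}: it passes to Stein compacts in $U$, shows that the pullbacks $p^{n*}$ along the \v{C}ech nerve of $X\to X\DR$ commute with internal homs (reducing to free sheaves via \cite[7.2.4.11]{Lurie_Higher_algebra}), and then uses that $U$ and $X$ are Stein so that Cartan's Theorem B kills the higher derived global sections of $\cHom_U(\cF,\cG)$ and of its pullbacks. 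None of this higher-$\Ext$ comparison appears in your argument, and it is not a formality; it is the step where Stein geometry and the holomorphic Poincar\'e lemma actually enter.

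A secondary point: your descent-on-$X$ reduction is carried out only at the level of hearts, whereas the d\'evissage requires the reduction for the full functor $\Coh^-(U\times X\B)\to\Coh^-(U\times X\DR)$; the paper handles this by noting that $(-)\B$ and $(-)\DR$ send \v{C}ech nerves of analytic covers to colimits and that a limit of fully faithful functors is fully faithful. Your essential-surjectivity step and the appeal to \cref{prop:comparison_RH} are fine as far as they go.
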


\begin{proof}
	Observe that both functors $\dAn_{\mathbb C}^0 \to \dAnSt$
	\[ X \mapsto X\DR , \qquad X \mapsto X\B \]
	take \v{C}ech nerves of coverings for the analytic topology $\tauan$ into colimit diagrams.
	Indeed, \cref{cor:De_Rham_colimits} implies the statement for the assignment $X \mapsto X\DR$.
	For what concerns the assignment $X \mapsto X\B$, it is enough to observe that the functor
	\[ (-)\topl \colon \dAn_{\mathbb C}^0 \to \cS \]
	takes indeed \v{C}ech nerves of coverings for the analytic topology into colimit diagrams.
	In particular, we can limit ourselves to prove the proposition in the special case where $X\topl$ is weakly contractible.
	
	\cref{cor:De_Rham_Betti_t_exact} guarantees that the map $\eta_{\mathrm{RH}}^*$ is $t$-exact.
	Using \cite[Lemma 7.13]{Hennion_Formal_gluing}, we see that it is enough to prove the following statements:
	\begin{enumerate}
		\item $\eta_{\mathrm{RH}}^*$ is fully faithful (in the derived sense) when restricted to $\Cohh(U \times X\B)$, and
		\item the induced map $\eta_{\mathrm{RH}}^* \colon \Cohh(U \times X\B) \to \Cohh(U \times X\DR)$ is essentially surjective.
	\end{enumerate}
	Since $U$ is an \emph{ordinary} Stein space, the second condition is satisfied in virtue of \cref{prop:comparison_RH} and of the classical Riemann-Hilbert correspondence of Deligne \cite[Théorème 2.23]{Deligne_Equations_differentielles}.
	We are therefore left to prove the fully faithfulness of $\eta_{\mathrm{RH}}^*$.
	
	We start by observing that if
	\[ F_0, F_1 \colon I \to \Cat_\infty \]
	are two functors and $f \colon F_0 \to F_1$ is a natural transformation with the property that for every $x \in I$ the induced functor
	\[ f_x \colon F_0(x) \to F_1(x) \]
	is fully faithful, then the induced functor
	\[ \overline{f} \colon \varprojlim_I F_0 \to \varprojlim_I F_1 \]
	is also fully faithful.
	Indeed, if $s, s' \in \varprojlim_I F_0$ are elements (thought as Cartesian sections of the Cartesian fibration associated to $F_0$), then we have
	\begin{align*}
		\Map_{\varprojlim_I F_0}(s,s') & \simeq \varprojlim_{x \in I} \Map_{F_0(x)}(s(x), s'(x)) \\
		& \simeq \varprojlim_{x \in I} \Map_{F_1(x)}(f_x(s(x)), f_x(s'(x))) \\
		& \simeq \Map_{\varprojlim_I F_1}(\overline{f}(s), \overline{f}(s')) .
	\end{align*}
	Combining this fact with the property of $X\B$ and $X\DR$ of commuting with geometric realizations of the \v{C}ech nerve of open covers, we can reduce ourselves to prove the statement in the special case where $X\topl$ is weakly contractible.
	\personal{Actually, we are also using the fact that in topoi colimits are universal.}
	Furthermore, combining this same argument with \cite[7.3.4.9]{HTT} we can replace $U$ with a Stein compact inside $U$.
	
	In this case, $X\B \simeq *$ and the Riemann-Hilbert natural transformation coincides with the projection
	\[ p \colon U \times X\DR \to U . \]
	We are therefore reduced to prove that $p^*$ is fully faithful when restricted to $\Cohh(U)$.
	
	Using \cref{lem:De_Rham_analytic} we see that $U \times X\DR \simeq | U \times (X^\bullet / X\DR) |$, and therefore we get an equivalence
	\[ p^{\bullet *} \colon \Coh^-(U \times X\DR) \to \varprojlim \Coh^-(U \times (X^\bullet / X\DR)) . \]
	Observe that we can promote these $\infty$-categories to symmetric monoidal $\infty$-categories and the functors $p^{n *}$ to symmetric monoidal functors.
	Moreover, these categories are closed.
	We claim that the functors $p^{n*}$ commute with the internal hom.
	Indeed, since we are assuming $U$ to be a Stein compact, we can use \cite[7.2.4.11]{Lurie_Higher_algebra} to reduce ourselves to show that
	\[ p^{n*} \cHom_U(\cF, \cG) \simeq \cHom_{U \times (X^n /X\DR)}(p^{n*} \cF, p^{n*} \cG)  \]
	in the special case where $\cF$ is a free sheaf.
	As the statement is tautological in this setting, we see that the claim is indeed proven.
	
	We now use the fact that both $U$ and $X$ are Stein spaces. In particular, if $\cF$ is a free sheaf and $\cG \in \Cohh(U)$, then
	\[ \rR^i \Gamma(U; \cHom_U(\cF, \cG)) \simeq \rR^i\Gamma(U \times (X^n / X\DR);  p^{n*} \cHom_{U \times (X^n / X\DR)}(\cF, \cG)) \simeq 0 . \]
	Moreover, the classical Riemann-Hilbert correspondence of \cite[Théorème 2.23]{Deligne_Equations_differentielles} implies that $p^*$ induces an equivalence
	\[ \pi_0 \Map_{\Cohh(U)}(\cF, \cG) \simeq \pi_0 \Map_{\Cohh(U \times X\DR)}(p^* \cF, p^*\cG) . \]
	Putting these information together, we conclude that the canonical map
	\[ \Map_{\Coh^-(U)}(\cF, \cG) \to \Map_{\Coh^-(U \times X\DR)}(p^* \cF, p^* \cG) \]
	is an equivalence whenever $\cF$ is free.
	Since $U$ is a Stein compact, we can now invoke \cite[7.2.4.11(5)]{Lurie_Higher_algebra} once again to deduce that the above map is an equivalence for every $\cF \in \Coh^-(U)$.
	This completes the proof.
\end{proof}

\begin{cor} \label{cor:RH_perfect_modules_ordinary_base}
	Let $X$ be a smooth analytic space and let $U \in \Stn$ be an ordinary Stein space.
	The map $\eta_{\mathrm{RH}} \colon X\DR \to X\B$ induces an equivalence of stable $\infty$-categories:
	\[ \eta_{\mathrm{RH}}^* \colon \Perf(U \times X\B) \to \Perf(U \times X\DR) . \]
\end{cor}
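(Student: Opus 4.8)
The plan is to bootstrap from \cref{prop:RH_almost_perfect_modules_ordinary_base}. That proposition already exhibits $\eta_{\mathrm{RH}}^*$ as an equivalence $\Coh^-(U \times X\B) \simeq \Coh^-(U \times X\DR)$, and, as recorded in its proof, this equivalence upgrades to one of symmetric monoidal stable $\infty$-categories, the functor $\eta_{\mathrm{RH}}^*$ being pullback along the morphism of stacks $\eta_{\mathrm{RH}} \colon U \times X\DR \to U \times X\B$. Since $\Perf(U \times X\B)$ and $\Perf(U \times X\DR)$ are by definition full subcategories of the respective almost-perfect categories, and the ambient functor is already an equivalence, full faithfulness on perfect objects is automatic; the only thing to verify is that $\eta_{\mathrm{RH}}^*$ carries one subcategory \emph{onto} the other, i.e.\ that an almost perfect module is perfect if and only if its image under $\eta_{\mathrm{RH}}^*$ is.

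First I would use that $\eta_{\mathrm{RH}}^*$, being symmetric monoidal and exact, preserves the monoidal unit $\cO$ together with retracts, fiber sequences and finite colimits. Hence it sends the smallest stable, idempotent-complete subcategory generated by $\cO_{U \times X\B}$ into the one generated by $\cO_{U \times X\DR}$ (equivalently, it preserves the dualizable objects). Because the inverse equivalence furnished by \cref{prop:RH_almost_perfect_modules_ordinary_base} is again symmetric monoidal and exact, the same holds in the opposite direction, so an almost perfect module is a retract of a finite complex of free modules exactly when its image is. This identifies the subcategories generated by the two structure sheaves.

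The main obstacle I anticipate is the word \emph{locally} in the definition of $\Perf$: membership in the subcategory generated by $\cO$ is required only after restriction to an analytic cover of $X$. To address this I would invoke that the equivalence of \cref{prop:RH_almost_perfect_modules_ordinary_base} is natural in $U$ and compatible with the analytic localization on $X$ --- both assignments $X \mapsto X\B$ and $X \mapsto X\DR$ carry \v{C}ech nerves of $\tauan$-covers to colimit diagrams, a fact already exploited there --- so that $\eta_{\mathrm{RH}}^*$ intertwines the restriction functors along any open cover $\{X_i \to X\}$. Since perfectness is precisely a condition to be checked on such a cover, it is therefore both preserved and reflected by $\eta_{\mathrm{RH}}^*$. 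Combining this with the previous paragraph, $\eta_{\mathrm{RH}}^*$ restricts to a fully faithful and essentially surjective functor $\Perf(U \times X\B) \to \Perf(U \times X\DR)$, which is the desired equivalence.
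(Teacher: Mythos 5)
Your treatment of full faithfulness is the same as the paper's: restrict the $\Coh^-$-equivalence of \cref{prop:RH_almost_perfect_modules_ordinary_base} to the full subcategories of perfect objects. The gap is in essential surjectivity. Your mechanism --- a symmetric monoidal exact equivalence preserves the unit, hence the stable idempotent-complete subcategory it generates --- only identifies the objects that are \emph{globally} built from $\cO$ on the two sides. But an object $\cF \in \Perf(U \times X\DR)$ is only required to lie in that subcategory after restriction along an atlas, i.e.\ after pullback to $U \times X \to U \times X\DR$ (and even then only locally); it need not lie in the thick subcategory of the unit of $\Coh^-(U \times X\DR)$ itself. Localizing on $X$, as you propose, does not repair this: even when $X$ is Stein with $X\topl$ contractible, $U \times X\DR$ is not a derived Stein space, so the condition defining $\Perf(U \times X\DR)$ still refers to the cover $U \times X \to U \times X\DR$, whereas perfectness of the preimage $\cG \in \Coh^-(U \times X\B) \simeq \Coh^-(U)$ is a condition on $U$. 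After all your reductions you therefore still owe the implication: if $q^*\cG$ is perfect on $U \times X$ (with $q$ the projection), then $\cG$ is perfect on $U$. (Your parenthetical ``equivalently, it preserves the dualizable objects'' hints at a genuine alternative route, but the thick subcategory of the unit does not coincide with the dualizable objects in general, and the identification of $\Perf(U \times X\DR)$ with the dualizables of $\Coh^-(U \times X\DR)$ is itself unproved.)

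The paper closes exactly this gap with a short step you are missing: after reducing to the case where $X\topl$ is weakly contractible (so $U \times X\B \simeq U$), choose a point $x \in X$; it induces a section $s \colon U \simeq U \times *\DR \to U \times X\DR$ of the projection, and then $\cG \simeq s^* \eta_{\mathrm{RH}}^*(\cG) \simeq s^*(\cF)$ is the pullback of a perfect complex along a map of stacks, hence perfect. Inserting this argument (or some other descent of perfectness along the surjection $U \times X \to U$) would complete your proof.
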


\begin{proof}
	We start by forming the commutative diagram
	\[ \begin{tikzcd}
		\Perf(U \times X\B) \arrow{r}{\eta_{\mathrm{RH}}^*} \arrow[hook]{d} & \Perf(U \times X\DR) \arrow[hook]{d} \\
		\Coh^-(U \times X\B) \arrow{r}{\eta_{\mathrm{RH}}^*} & \Coh^-(U \times X\DR) .
	\end{tikzcd} \]
	The vertical morphisms are fully faithful and the bottom horizontal map is an equivalence in virtue of \cref{prop:RH_almost_perfect_modules_ordinary_base}.
	It follows that the top horizontal map is fully faithful as well.
	
	We are left to prove that $\eta_{\mathrm{RH}}^* \colon \Perf(U \times X\B) \to \Perf(U \times X\DR)$ is essentially surjective.
	Since $\bfAnPerf$ is a sheaf, we can immediately reduce ourselves to the case where $X$ is a Stein space and $X\topl$ is weakly contractible, so that $U \times X\B \simeq U$.
	Choose $\cF \in \Perf(U \times X\DR)$.
	Reviewing $\cF$ as an element in $\Coh^-(U \times X\DR)$ and applying \cref{prop:RH_almost_perfect_modules_ordinary_base}, we can find $\cG \in \Coh^-(U)$ such that
	\[ \eta_{\mathrm{RH}}^*(\cG) \simeq \cF . \]
	It is then enough to prove that $\cG$ is a perfect complex on $U$.
	Choose a point $x \in X$.
	This point produces a section
	\[ s \colon U \simeq U \times *\DR \to U \times X\DR \]
	of the projection $p \colon U \times X\DR \to U$, which in turn can be identified with $\eta_{\mathrm{RH}}$.
	\personal{Here we are really using that $X\topl$ is contractible. Otherwise, we would not be able to get a map $U \times X\B \to U \times X\DR$!}
	In particular, we have
	\[ \cG \simeq s^* \eta_{\mathrm{RH}}^*(\cG) \simeq s^* (\cF) .  \]
	Since $\cF$ is a perfect complex, we conclude that the same goes for $\cG$.
	The proof is therefore complete.
\end{proof}

\section{Riemann-Hilbert correspondence with derived coefficients}

We need some categorical preliminary before tackling the proof of the main theorem.

\begin{defin}[{cf.\ \cite[4.7.5.16]{Lurie_Higher_algebra}}]
	We let $\Fun_{\mathrm{adj}}(\Delta^1, \Cat_\infty)$ be the $\infty$-subcategory of $\Fun(\Delta^1, \Cat_\infty)$ having as objects those morphisms $f \colon \cC \to \cD$ that admit a right adjoint, and having as morphisms the squares
	\[ \begin{tikzcd}
		\cC_1 \arrow{d}{f_1} \arrow{r}{h'} & \cC_2 \arrow{d}{f_2} \\ \cD_1 \arrow{r}{h} & \cD_2
	\end{tikzcd} \]
	that are right adjointable.
	That is, if $g_1$ and $g_2$ are the right adjoints for $f_1$ and $f_2$ respectively, we require the Beck-Chevally natural transformation
	\[ \alpha \colon h' g_1 \to g_2 h \]
	to be an equivalence.
\end{defin}

\begin{defin}
	We let $\Fun_{\mathrm{mon}}(\Delta^1, \Cat_\infty)$ be the full subcategory of $\Fun_{\mathrm{adj}}(\Delta^1, \Cat_\infty)$ spanned by those morphisms $f \colon \cC \to \cD$ having a \emph{monadic} right adjoint.
\end{defin}

\begin{lem} \label{lem:Beck_Chevalley_monadic_limits}
	\begin{enumerate}
		\item The $\infty$-category $\Fun_{\mathrm{adj}}(\Delta^1, \Cat_\infty)$ admits small limits and the inclusion
		\[ \Fun_{\mathrm{adj}}(\Delta^1, \Cat_\infty) \hookrightarrow \Fun(\Delta^1, \Cat_\infty) \]
		preserves them.
		\item The $\infty$-category $\Fun_{\mathrm{mon}}(\Delta^1, \Cat_\infty)$ is closed under limits in $\Fun_{\mathrm{adj}}(\Delta^1, \Cat_\infty)$.
		In particular, it admits small limits.
	\end{enumerate}
\end{lem}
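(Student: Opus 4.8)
The plan is to reduce everything to the fact that limits in $\Fun(\Delta^1,\Cat_\infty)$ are computed objectwise (\cite[\S 5.1.2]{HTT}) and then check by hand that the objectwise limit of an adjointable diagram stays adjointable, and in the monadic case stays monadic. Fix a small diagram $\alpha \mapsto (f_\alpha \colon \cC_\alpha \to \cD_\alpha)$ in $\Fun_{\mathrm{adj}}(\Delta^1,\Cat_\infty)$ indexed by a small $\infty$-category $K$, write $g_\alpha$ for the right adjoint of $f_\alpha$, and for a morphism $u \colon \alpha \to \beta$ write $h'_u \colon \cC_\alpha \to \cC_\beta$ and $h_u \colon \cD_\alpha \to \cD_\beta$ for the transition functors. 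The candidate limit is the arrow $f \coloneqq \varprojlim_\alpha f_\alpha \colon \cC \to \cD$, with $\cC \coloneqq \varprojlim_\alpha \cC_\alpha$, $\cD \coloneqq \varprojlim_\alpha \cD_\alpha$, and projections $\pi_\alpha \colon \cC \to \cC_\alpha$, $\rho_\alpha \colon \cD \to \cD_\alpha$ satisfying $\rho_\alpha f \simeq f_\alpha \pi_\alpha$; all the content lies in promoting this arrow to an object of $\Fun_{\mathrm{adj}}$ and verifying the universal property there.

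For the first statement, I would note that right-adjointability of the transition squares says exactly that the Beck--Chevalley maps $h'_u g_\alpha \to g_\beta h_u$ are equivalences, i.e.\ that the $g_\alpha$ constitute a natural transformation between the two $K$-diagrams $\cD_\bullet$ and $\cC_\bullet$; taking limits produces $g \coloneqq \varprojlim_\alpha g_\alpha \colon \cD \to \cC$ with $\pi_\alpha g \simeq g_\alpha \rho_\alpha$. One then shows $f \dashv g$ by a mapping-space computation: since mapping spaces in a limit of $\infty$-categories are the limits of the factorwise mapping spaces (the mapping space is an iterated pullback of the cotensor $\Fun(\Delta^1,-)$, and both commute with $\varprojlim_\alpha$), one has for $c \in \cC$, $d \in \cD$ the natural chain
\[
\begin{aligned}
\Map_\cD(f c, d) &\simeq \varprojlim_\alpha \Map_{\cD_\alpha}(\rho_\alpha f c, \rho_\alpha d) \simeq \varprojlim_\alpha \Map_{\cD_\alpha}(f_\alpha \pi_\alpha c, \rho_\alpha d) \\
&\simeq \varprojlim_\alpha \Map_{\cC_\alpha}(\pi_\alpha c, g_\alpha \rho_\alpha d) \simeq \varprojlim_\alpha \Map_{\cC_\alpha}(\pi_\alpha c, \pi_\alpha g d) \simeq \Map_\cC(c, g d).
\end{aligned}
\]
By the characterization of adjoints via mapping spaces (\cite[\S 5.2]{HTT}) this exhibits $g$ as right adjoint to $f$, so $f \in \Fun_{\mathrm{adj}}$. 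The projection legs are right-adjointable since $\pi_\alpha g \simeq g_\alpha \rho_\alpha$ holds by construction, and for any competing cone the induced map to $f$ is right-adjointable because this can be tested against the jointly conservative family $\{\pi_\alpha\}$, where it reduces to the adjointability of the given legs. This simultaneously shows the limit lands in $\Fun_{\mathrm{adj}}$ and that the inclusion preserves it.

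For the second statement I would verify the Barr--Beck--Lurie criterion \cite[Theorem 4.7.3.5]{Lurie_Higher_algebra} for $g = \varprojlim_\alpha g_\alpha$, assuming each $g_\alpha$ monadic. Conservativity is immediate: equivalences in $\cC$ and $\cD$ are detected by the projections, $\pi_\alpha g \simeq g_\alpha \rho_\alpha$, and each $g_\alpha$ is conservative. For preservation of geometric realizations of $g$-split simplicial objects, let $U_\bullet \colon \Delta\op \to \cD$ be $g$-split, so $g U_\bullet$ is split. Applying $\pi_\alpha$ to the splitting and using $\pi_\alpha g \simeq g_\alpha \rho_\alpha$ shows each $\rho_\alpha U_\bullet$ is $g_\alpha$-split, hence has a realization in $\cD_\alpha$ preserved by $g_\alpha$. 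These assemble to a realization in $\cD = \varprojlim_\alpha \cD_\alpha$ provided the transition functors $h_u$ preserve them, which one checks by applying the conservative functor $g_\beta$ to the comparison map $h_u|\rho_\alpha U_\bullet| \to |\rho_\beta U_\bullet|$ and invoking right-adjointability $g_\beta h_u \simeq h'_u g_\alpha$ together with the fact that split simplicial objects are absolutely colimiting (\cite[\S 4.7.2]{Lurie_Higher_algebra}): both sides are then identified with $|\pi_\beta g U_\bullet|$. The same bookkeeping shows $g$ preserves the resulting realization.

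The genuinely delicate point, and where I expect the main obstacle, is this last step: colimits in $\varprojlim_\alpha \cD_\alpha$ are objectwise only when the transition functors are suitably cocontinuous, and a priori the $h_u$ need not preserve geometric realizations. What saves the argument is that $g$-splitness is inherited by the projections and that split realizations are absolute, which forces the relevant colimits to be preserved once the conservative functors $g_\beta$ are applied. I would therefore isolate the preservation of these split realizations as the crux, and treat the mapping-space manipulations in part~(1) and the conservativity statement as routine.
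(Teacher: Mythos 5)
Your overall strategy coincides with the paper's: in both parts the limit is computed in $\Fun(\Delta^1,\Cat_\infty)$, the pointwise right adjoints are assembled into a functor between the limit categories, and part (2) is settled by the Barr--Beck--Lurie criterion together with the observation that split geometric realizations are absolute, so that the transition functors are forced to preserve them once one applies the conservative functors $g_\beta$ and invokes the Beck--Chevalley equivalences. Your part (2) is essentially identical to the paper's argument, and your verification of the universal property of the limit cone (testing right-adjointability of the induced map against the jointly conservative family of projections, using that Beck--Chevalley transformations paste) is if anything spelled out more carefully than in the paper.

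The one step you have assumed away is the sentence in part (1) asserting that the objectwise Beck--Chevalley equivalences mean ``the $g_\alpha$ constitute a natural transformation between the two $K$-diagrams.'' In the $\infty$-categorical setting this is not a condition to be checked on edges but a structure to be constructed: the mates $g_\alpha$ must be assembled coherently over all simplices of the indexing category, and this is precisely where the paper does its real work. Its proof unstraightens $f\colon F_0\to F_1$ to a map of Cartesian fibrations $\varphi\colon\cX_0\to\cX_1$ over the index category, produces a right adjoint $\psi$ \emph{relative to the base} via (the dual of) \cite[7.3.2.6]{Lurie_Higher_algebra}, and then identifies the statement that $\psi$ preserves Cartesian edges with the right-adjointability of the transition squares, so that $\psi$ straightens back to the desired natural transformation $g$; the paper also notes that this is \cite[4.7.5.18]{Lurie_Higher_algebra}. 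Once $g$ exists as a genuine natural transformation, your mapping-space verification of the adjunction $\varprojlim f_\alpha\dashv\varprojlim g_\alpha$ goes through (and is then nearly automatic). So the route is the same as the paper's and the outline is correct, but the crux of part (1) is the coherence step you treat as immediate, not the split-realization bookkeeping, which is the crux only of part (2).
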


\begin{rem}
	As M.\ Robalo pointed out to us, the first point of the above lemma already appeared as \cite[4.7.5.18]{Lurie_Higher_algebra}.
	For sake of clarity, we provide a proof, that is different from the one given in loc.\ cit.
\end{rem}

\begin{proof}
	Let $F \colon I \to \Fun_{\mathrm{adj}}(\Delta^1, \Cat_\infty)$ be a diagram.
	Associated to $F$ we have two diagrams
	\[ F_0, F_1 \colon I \to \Cat_\infty , \]
	plus a natural transformation
	\[ f \colon F_0 \to F_1 . \]
	Applying the unstraightening functor we thus obtain a morphism of Cartesian fibrations
	\[ \begin{tikzcd}
		\cX_0 \arrow{rr}{\varphi} \arrow{dr}[swap]{p_0} & & \cX_1 \arrow{dl}{p_1} \\ & I & \phantom{\cX_1} ,
	\end{tikzcd} \]
	where $p_i \colon \cX_i \to I$ is the Cartesian fibrations associated to $F_i$ (for $i = 0,1$) and $\varphi$ corresponds to $f$.
	The morphism $\varphi$ preserves Cartesian edges.
	We claim that $\varphi$ has a right adjoint relative to $I$ (in the sense of \cite[7.3.2.2]{Lurie_Higher_algebra}).
	In order to see this, it is enough to verify that the dual hypotheses of \cite[7.3.2.6]{Lurie_Higher_algebra} are satisfied.
	Condition (2) in loc.\ cit.\ is satisfied because $\varphi$ preserves Cartesian edges.
	As for condition (1), it is enough to observe that for every object $x \in I$, the morphism
	\[ f_x \colon F_0(x) \to F_1(x) \]
	has a right adjiont, by assumption.
	
	As a consequence, we obtain a functor of $\infty$-categories $\psi \colon \cX_1 \to \cX_0$, compatible with the projections to $I$.
	This functor is characterized by the fact that its fiber over $x \in I$ coincides with a right adjoint for $f_x$.
	We claim that $\psi$ preserves Cartesian edges.
	Indeed, after unraveling the definitions, this is equivalent to the statement that for every edge $\alpha \colon x \to y$ in $I$, the square
	\[ \begin{tikzcd}
		F_0(x) \arrow{r}{f_x} \arrow{d}{F_0(\alpha)} & F_1(x) \arrow{d}{F_1(\alpha)} \\
		F_0(y) \arrow{r}{f_y} & F_1(y)
	\end{tikzcd} \]
	is right adjointable, which is true by assumption.
	In particular, we see that $\psi$ gives rise to a natural transformation
	\[ g \colon F_0 \to F_1 , \]
	characterized by the property that $g_x$ is a right adjoint for $f_x$ for every $x \in I$.
	
	In particular, passing to the inverse limits, $f$ and $g$ induce a pair of functors
	\[ \overline{f} \colon \varprojlim_I F_0 \leftrightarrows \varprojlim_I F_1 \colon \overline{g} , \]
	that are compatible with the evaluation morphisms in the sense that the squares
	\[ \begin{tikzcd}
		\varprojlim_I F_0 \arrow[shift left = 1]{d}{\overline{f}} \arrow{r}{\ev_x^0} & F_0(x) \arrow[shift left = 1]{d}{f_x} \\
		\varprojlim_I F_1 \arrow{r}{\ev_x^1} \arrow[shift left = 1]{u}{\overline{g}} & F_1(x) \arrow[shift left = 1]{u}{g_x}
	\end{tikzcd} \]
	commute for every $x \in I$.
	The commutativity of the above diagram implies that $\overline{f}$ is a left adjoint for $\overline{g}$ and that the above square is right adjointable.
	In turn, this implies that
	\[ \overline{f} \colon \varprojlim_I F_0 \to \varprojlim_I F_1 \]
	is a limit diagram for $f \colon F_0 \to F_1$ in $\Fun_{\mathrm{adj}}(\Delta^1, \Cat_\infty)$.
	
	In order to prove statement (2), it is enough to prove that, with the above notations, if $g_x$ is monadic for every $x \in I$, then $\overline{g}$ is monadic as well.
	Invoking the Lurie-Barr-Beck theorem \cite[4.7.4.5]{Lurie_Higher_algebra}, we see that it is enough to check that if $V^\bullet$ is a $\overline{g}$-split simplicial object in $\varprojlim_I F_0$, then the colimit of $V^\bullet$ exists and it is preserved by $\overline{g}$.
	The very definition of $\overline{g}$-split simplicial object (see \cite[4.7.3.2]{Lurie_Higher_algebra}) implies that $\ev_x^0( V^\bullet )$ is a $g_x$-split simplicial object in $F_0(x)$.
	In particular, its colimit exists and it is preserved by $g_x$.
	Furthermore, since $g_x( \ev_x^0( V^\bullet ) ) \simeq \ev_x^1 ( \overline{g}( V^\bullet ) )$ is a split simplicial object in $F_1(x)$, we see that every functor defined on $F_1(x)$ commutes with the geometric realization of this simplicial object.
	This has the following consequence: if $\alpha \colon x \to y$ is a morphism in $I$, then $F_0(\alpha)$ commutes with the geometric realization of $\ev_x^0( V^\bullet )$.
	Indeed, since $g_y$ is monadic and $F_0(\alpha)( \ev_x^0 (V^\bullet))$ is $g_y$-split, it is enough to prove that $g_y \circ F_0(\alpha) \simeq F_1(\alpha) \circ g_x$ commutes with such geometric realization, which is true in virtue of the above consideration.
	This shows that the colimit of $V^\bullet$ exists after applying each evaluation $\ev^0_x$, and that such colimit is preserved by the transition morphisms of the diagram $F_0$.
	It follows that the colimit of $V^\bullet$ exists in $\varprojlim_I F_0$ and that it is preserved by $\overline{g}$.
	In conclusion, $\overline{g}$ is monadic and so the proof is complete.
\end{proof}

We use the above categorical considerations in the following way.
Let $U \in \dStn$ and let $\cF \in \Coh^{\le -1}(V)$.
Consider the split square-zero extension $U[\cF]$ together with the natural map
\[ \pi \colon U[\cF] \to U , \]
corresponding to the zero derivation with coefficients in $\cF$.

Let $V \in \dStn$ be any other space and consider the diagram
\[ \begin{tikzcd}
	V \times U[\cF] \arrow{r}{\pi_V} \arrow{d}{q} & V \times U \arrow{d}{p} \arrow{r} & V \arrow{d} \\
	U[\cF] \arrow{r}{\pi} & U \arrow{r} & *_{\mathbb C}
\end{tikzcd} \]
The square on the right and the outer one are derived pullbacks by definition.
It follows that the one on the left is a derived pullback as well.

Notice furthermore that since $\rH^0(\cF) = 0$, the map $\pi_0$ is a closed immersion.
In particular, the induced map $V \times U[\cF] \to V \times U$ is a closed immersion as well.
The derived proper base change implies:

\begin{lem}
	The diagram
	\[ \begin{tikzcd}
		\Coh^-(V \times U[\cF]) & \Coh^-(V \times U) \arrow{l}[swap]{\pi_V^*} \\
		\Coh^-(U[\cF]) \arrow{u}[swap]{q^*}  & \Coh^-(U) \arrow{u}{p^*} \arrow{l}[swap]{\pi^*}
	\end{tikzcd} \]
	is right adjointable.
\end{lem}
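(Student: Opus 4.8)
The plan is to recognize this square as a single instance of base change along a closed immersion, for which \cref{thm:unramifiedness} supplies precisely the equivalence we need. First I would record the geometric input already assembled before the statement: the left-hand square of derived \canal spaces
\[ \begin{tikzcd}
	V \times U[\cF] \arrow[hook]{r}{\pi_V} \arrow{d}{q} & V \times U \arrow{d}{p} \\
	U[\cF] \arrow[hook]{r}{\pi} & U
\end{tikzcd} \]
is a derived pullback, and $\pi$ is a closed immersion. The latter holds because $\rH^0(\cF) = 0$ forces the structure map $\cO_U \to \cO_U \oplus \cF$ to be an isomorphism on $\pi_0$, hence an effective epimorphism, so $\pi$ satisfies the definition of a closed immersion; its base change $\pi_V$ along $p$ is then a closed immersion as well.

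Next I would produce the right adjoints of the two horizontal functors and check that everything stays inside the bounded-above coherent categories. Since $\pi$ and $\pi_V$ are closed immersions, the pullbacks $\pi^*$ and $\pi_V^*$ admit right adjoints given by the pushforwards $\pi_*$ and $\pi_{V*}$, and pushforward along a closed immersion is $t$-exact and preserves coherence and bounded-above-ness, so these adjoints restrict to functors $\pi_* \colon \Coh^-(U[\cF]) \to \Coh^-(U)$ and $\pi_{V*} \colon \Coh^-(V \times U[\cF]) \to \Coh^-(V \times U)$. The vertical functors $q^*$ and $p^*$ are pullbacks along the (flat) projections off $V$, hence are exact and preserve $\Coh^-$. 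Thus every functor in the square, together with the required right adjoints, is well defined on the relevant subcategories, and it is legitimate to ask for right adjointability.

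It then remains to verify that the Beck--Chevalley mate
\[ \alpha \colon p^* \pi_* \longrightarrow \pi_{V*}\, q^* \]
built from the commutativity datum $\pi_V^* p^* \simeq q^* \pi^*$ and the units and counits of the two adjunctions is an equivalence. Here I would invoke \cref{thm:unramifiedness}(2) applied to the pullback square above, with $j = \pi$ the closed immersion, $f = p$, $g = q$ and $i = \pi_V$: it yields, for every $\cG \in \Coh^-(U[\cF])$, a canonical equivalence $p^* \pi_* \cG \simeq \pi_{V*} q^* \cG$. The one genuinely delicate point — and the step I expect to be the main obstacle — is to confirm that the abstractly defined mate $\alpha$ coincides with the comparison transformation produced by \cref{thm:unramifiedness}; this is a formal but slightly fiddly unwinding of the unit of $(\pi_V^*, \pi_{V*})$ and the counit of $(\pi^*, \pi_*)$. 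Once that identification is in place, $\alpha$ is an equivalence, which is exactly the right adjointability of the square.
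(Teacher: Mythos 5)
Your proposal is correct and follows exactly the paper's route: the paper's entire proof is the observation that the square is a derived pullback along the closed immersion $\pi$, so that right adjointability is a special case of \cref{thm:unramifiedness}(2). The extra checks you flag (that $\pi_*$, $\pi_{V*}$ preserve $\Coh^-$ and that the Beck--Chevalley mate agrees with the comparison map of the theorem) are reasonable diligence but are left implicit in the paper.
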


\begin{proof}
	This is a special case of \cref{thm:unramifiedness}.
\end{proof}

As consequence, we see that the map
\[ \pi^* \colon \bfAnCoh^-(U) \to \bfAnCoh^-(U[\cF]) , \]
seen as a functor
\[ A_\pi \colon \dStn\op \to \Fun(\Delta^1, \Catst) , \]
actually factors through
\[ A_\pi \colon \dStn\op \to \Fun_{\mathrm{adj}}(\Delta^1, \Catst) . \]
Moreover, since for every $V \in \dStn$ the map $\pi_{V_*} \colon \Coh^-(V \times U[\cF]) \to \Coh^-(V \times U)$ is monadic, we see that $A_\pi$ actually factors through $\Fun_{\mathrm{mon}}(\Delta^1, \Catst)$.
Since \cref{lem:Beck_Chevalley_monadic_limits} guarantees that limits in $\Fun_{\mathrm{adj}}(\Delta^1, \Catst)$ are computed objectwise, we see that $A_\pi$ is a sheaf for the $\tauan$-topology.
In particular, we obtain the following key result:

\begin{cor} \label{cor:monadic_right_adjointable_square}
	Let $f \colon F \to G$ be a morphism in $\dAnSt$.
	Then the induced diagram
	\[ \begin{tikzcd}
		\Map_{\dAnSt^{\mathrm{cat}}}(G, \bfAnCoh^-(U)) \arrow{r}{\pi_G^*} \arrow{d}{f^*} & \Map_{\dAnSt^{\mathrm{cat}}}(G, \bfAnCoh^-(U[\cF])) \arrow{d}{f^*} \\
		\Map_{\dAnSt^{\mathrm{cat}}}(F, \bfAnCoh^-(U))) \arrow{r}{\pi_F^*} & \Map_{\dAnSt^{\mathrm{cat}}}(F, \bfAnCoh^-(U[\cF]))
	\end{tikzcd} \]
	is right adjointable and the right adjoints $\pi_{G*}$ and $\pi_{F*}$ are monadic.
\end{cor}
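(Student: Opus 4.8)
The plan is to globalize the representable case—already settled above by unramifiedness and derived proper base change—using that the morphism $\pi^*$ assembles into a $\tauan$-sheaf valued in $\Fun_{\mathrm{mon}}(\Delta^1, \Catst)$, together with the fact that this $\infty$-category admits limits. First I would recall that $\Map_{\dAnSt^{\mathrm{cat}}}(-, \cG)$ sends colimits in its first variable to limits. Writing $G$ as the colimit of its points over the slice $(\dStn)_{/G}$, one obtains
\[ \Map_{\dAnSt^{\mathrm{cat}}}(G, \bfAnCoh^-(U)) \simeq \varprojlim_{V \to G} \Coh^-(V \times U) , \]
and likewise with $U[\cF]$ in place of $U$ and with $F$ in place of $G$; under these identifications the horizontal arrow $\pi_G^*$ (resp.\ $\pi_F^*$) is the limit of the arrows $\pi_V^*$. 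Here I use the paper's convention that the relevant mapping objects are $\infty$-categories, computed as limits in $\Catst$.

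Next I would package the whole top arrow of the square as a single limit in $\Fun_{\mathrm{mon}}(\Delta^1, \Catst)$. By the discussion preceding the statement, $A_\pi$ is a sheaf with values in $\Fun_{\mathrm{mon}}(\Delta^1, \Catst)$, sending $V$ to the arrow $\pi_V^* \colon \Coh^-(V \times U) \to \Coh^-(V \times U[\cF])$ whose right adjoint $\pi_{V*}$ is monadic. Restricting $A_\pi$ along the forgetful functor $(\dStn)_{/G} \to \dStn$ gives a diagram $((\dStn)_{/G})\op \to \Fun_{\mathrm{mon}}(\Delta^1, \Catst)$. By \cref{lem:Beck_Chevalley_monadic_limits} this $\infty$-category admits the limit of that diagram, and the limit is computed in $\Fun_{\mathrm{adj}}(\Delta^1, \Catst)$, hence objectwise on $\Delta^1$ in $\Catst$. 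Comparing with the previous paragraph, this limit is precisely the arrow $\pi_G^*$; since it lands in $\Fun_{\mathrm{mon}}$, it admits a monadic right adjoint $\pi_{G*}$. The identical argument over $(\dStn)_{/F}$ yields $\pi_{F*}$ and its monadicity.

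It then remains to upgrade the vertical maps to a morphism in $\Fun_{\mathrm{adj}}(\Delta^1, \Catst)$. The map $f \colon F \to G$ induces, by composition with $f$, a functor $\iota_f \colon (\dStn)_{/F} \to (\dStn)_{/G}$, and under the limit descriptions above the vertical functors $f^*$ are identified with the canonical comparison map
\[ \varprojlim_{(\dStn)_{/G}} A_\pi \longrightarrow \varprojlim_{(\dStn)_{/F}} A_\pi \]
between a limit and its restriction along $\iota_f$. This comparison is by construction a morphism in the $\infty$-category $\Fun_{\mathrm{mon}}(\Delta^1, \Catst)$ in which both limits are formed. Since a morphism in $\Fun_{\mathrm{adj}}(\Delta^1, \Catst)$ is by definition a right adjointable square, and since both source and target lie in $\Fun_{\mathrm{mon}}$, the square in the statement is right adjointable and the right adjoints $\pi_{G*}, \pi_{F*}$ are monadic, as asserted.

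The main obstacle will be the bookkeeping in the last step: I must check that the evaluation functors $\Fun_{\mathrm{mon}}(\Delta^1, \Catst) \to \Catst$ at the two endpoints of $\Delta^1$—which preserve these limits by the lemma—really recover the four mapping categories and the two horizontal arrows, and that the vertical $f^*$ coincide with the comparison map of limits rather than merely agreeing endpointwise. Once the assignment $G \mapsto \varprojlim_{V \to G} A_\pi$ is seen to take values in $\Fun_{\mathrm{mon}}(\Delta^1, \Catst)$, which is exactly the force of \cref{lem:Beck_Chevalley_monadic_limits}, the right adjointability and monadicity are formal, the genuine geometric input having already been absorbed into the sheaf property of $A_\pi$.
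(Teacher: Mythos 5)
Your proposal is correct and is essentially the paper's own argument: the paper derives the corollary directly from the observation that $A_\pi$ is a $\tauan$-sheaf valued in $\Fun_{\mathrm{mon}}(\Delta^1, \Catst)$ together with \cref{lem:Beck_Chevalley_monadic_limits}, and your limit-over-the-slice-categories computation, with the comparison map $\varprojlim_{(\dStn)_{/G}} A_\pi \to \varprojlim_{(\dStn)_{/F}} A_\pi$ realizing $f^*$ as a morphism in $\Fun_{\mathrm{adj}}(\Delta^1, \Catst)$, is exactly the formal extension to arbitrary stacks that the paper leaves implicit.
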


This allows us to handle the key inductive step:

\begin{cor} \label{cor:RH_inductive_step}
	Let $X$ be a smooth analytic space and suppose that $X\topl$ is weakly contractible.
	Let $U \in \dStn$ be such that the application $\eta_{\mathrm{RH}} \colon X\DR\to X\B$ induces an equivalence
	\[ \Coh^-(X\B \times U) \to \Coh^-(X\DR \times U) . \]
	Then for any $M \in \Coh^{\le -1}(U)$, the application $\eta_{\mathrm{RH}}$ induces an equivalence
	\[ \Coh^-(X\B \times U[M]) \to \Coh^-(X\DR \times U[M]) . \]
\end{cor}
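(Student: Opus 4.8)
The plan is to deduce the statement formally from \cref{cor:monadic_right_adjointable_square}, applied to the morphism $\eta_{\mathrm{RH}} \colon X\DR \to X\B$ of $\dAnSt$ and to the module $M \in \Coh^{\le -1}(U)$ that defines the square-zero extension $\pi \colon U[M] \to U$. First I would record the identification, for each of $F \in \{X\B, X\DR\}$, of the mapping space $\Map_{\dAnSt^{\mathrm{cat}}}(F, \bfAnCoh^-(U))$ with $\Coh^-(F \times U)$ (and likewise for $U[M]$): since $\bfAnCoh^-(U) \simeq \bfMap(U, \bfAnCoh^-)$, the hom-tensor adjunction in $\dAnSt^{\mathrm{cat}}$ gives $\Map_{\dAnSt^{\mathrm{cat}}}(F, \bfAnCoh^-(U)) \simeq \Map_{\dAnSt^{\mathrm{cat}}}(F \times U, \bfAnCoh^-) = \Coh^-(F \times U)$, exactly as in the proof of \cref{prop:t_structure_Betti_form}. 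With these identifications, \cref{cor:monadic_right_adjointable_square} produces the commutative square
\[ \begin{tikzcd}
	\Coh^-(X\B \times U) \arrow{r}{\pi^*_{X\B}} \arrow{d}{\eta_{\mathrm{RH}}^*} & \Coh^-(X\B \times U[M]) \arrow{d}{\eta_{\mathrm{RH}}^*} \\
	\Coh^-(X\DR \times U) \arrow{r}{\pi^*_{X\DR}} & \Coh^-(X\DR \times U[M])
\end{tikzcd} \]
which is right adjointable and whose two horizontal right adjoints $\pi_{X\B *}$ and $\pi_{X\DR *}$ are monadic.

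The left vertical functor is precisely the equivalence assumed in the hypothesis, so the task reduces to transporting this equivalence across the square by monadic descent. Writing $T_{X\B} \coloneqq \pi_{X\B *}\,\pi^*_{X\B}$ and $T_{X\DR} \coloneqq \pi_{X\DR *}\,\pi^*_{X\DR}$ for the induced monads, the Barr--Beck--Lurie theorem \cite[4.7.4.5]{Lurie_Higher_algebra} together with monadicity of the horizontal right adjoints yields
\[ \Coh^-(X\B \times U[M]) \simeq \mathrm{Mod}_{T_{X\B}}\!\big(\Coh^-(X\B \times U)\big), \qquad \Coh^-(X\DR \times U[M]) \simeq \mathrm{Mod}_{T_{X\DR}}\!\big(\Coh^-(X\DR \times U)\big), \]
under which the right vertical $\eta_{\mathrm{RH}}^*$ is the functor induced on module categories. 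The commutativity of the square supplies $\eta_{\mathrm{RH}}^*\,\pi^*_{X\B} \simeq \pi^*_{X\DR}\,\eta_{\mathrm{RH}}^*$, while right adjointability supplies the Beck--Chevalley equivalence $\eta_{\mathrm{RH}}^*\,\pi_{X\B *} \simeq \pi_{X\DR *}\,\eta_{\mathrm{RH}}^*$; composing them gives
\[ \eta_{\mathrm{RH}}^*\, T_{X\B} = \eta_{\mathrm{RH}}^*\,\pi_{X\B *}\,\pi^*_{X\B} \simeq \pi_{X\DR *}\,\eta_{\mathrm{RH}}^*\,\pi^*_{X\B} \simeq \pi_{X\DR *}\,\pi^*_{X\DR}\,\eta_{\mathrm{RH}}^* = T_{X\DR}\,\eta_{\mathrm{RH}}^* . \]
Since $\eta_{\mathrm{RH}}^*$ on the bases is an equivalence carrying $T_{X\B}$ to $T_{X\DR}$, it transports $T_{X\B}$-modules to $T_{X\DR}$-modules, and the induced functor on module categories --- namely the right vertical $\eta_{\mathrm{RH}}^*$ --- is an equivalence, which is the claim.

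The hard part is not any computation but ensuring that the Beck--Chevalley transformation is an equivalence of \emph{monads}, i.e.\ compatible with units and multiplications, so that it genuinely induces an equivalence of Eilenberg--Moore categories rather than merely an equivalence of underlying functors. This coherence is exactly what is packaged by regarding the square as a morphism in $\Fun_{\mathrm{mon}}(\Delta^1, \Catst)$, and it is delivered to us by \cref{cor:monadic_right_adjointable_square}, so it need not be checked by hand. I would finally remark that the weak contractibility of $X\topl$ is not used in this argument: the proof relies only on the base-level equivalence assumed in the statement together with the monadic, right-adjointable structure of the square, and the contractibility hypothesis serves instead to anchor the base case of the induction in which this corollary is applied.
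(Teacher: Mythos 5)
Your proposal is correct and follows essentially the same route as the paper: the paper likewise applies \cref{cor:monadic_right_adjointable_square} to $\eta_{\mathrm{RH}}$ to obtain the right adjointable square with monadic right adjoints, and then simply cites \cite[4.7.4.16]{Lurie_Higher_algebra} for the step you spell out by hand via Barr--Beck and the comparison of monads. Your closing observation that the weak contractibility of $X\topl$ is not actually used in this argument is also accurate.
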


\begin{proof}
	Applying \cref{cor:monadic_right_adjointable_square} to the map $\eta_{\mathrm{RH}} \colon X\DR \to X\B$, we obtain the following right adjointable square:
	\[ \begin{tikzcd}
			\Coh^-(X\B \times U) \arrow{r}{\pi\B^*} \arrow{d}{\eta_{\mathrm{RH}}^*} & \Coh^-(X\B \times U[\cF]) \arrow{d}{\eta_{\mathrm{RH}}^*} \\
			\Coh^-(X\DR \times U) \arrow{r}{\pi\DR^*} & \Coh^-(X\DR \times U[\cF]) .
	\end{tikzcd} \]
	The hypothesis guarantees that the left vertical map is an equivalence.
	Furthermore, the right adjoints $\pi_{\rB *}$ and $\pi_{\mathrm{dR}*}$ are monadic.
	It is now sufficient to invoke \cite[4.7.4.16]{Lurie_Higher_algebra} to conclude that the right vertical map is an equivalence as well.
\end{proof}

We only need one extra step in order to be ready for the proof of the derived Riemann-Hilbert correspondence.
We start with a definition:

\begin{defin} \label{def:inf_cart_convergence}
	Let $F \in \dAnSt^{\mathrm{cat}}$.
	We say that:
	\begin{enumerate}
		\item $F$ is \emph{strongly infinitesimally cartesian} if for every $V \in \dStn$, every $M \in \Coh^{\le -1}(V)$ and every analytic derivation $d \colon V[M] \to V$, the natural morphism
		\[ \bfMap(V_d[M[1]], F) \to \bfMap(V, F) \times_{\bfMap(V[M], F)} \bfMap(V, F) \]
		is an equivalence in $\dAnSt$.
		\item $F$ is \emph{strongly convergent} if for every $V \in \dStn$ the natural morphism
		\[ \bfMap(V, F) \to \varprojlim_n \bfMap (\mathrm t_{\le n}(V), F) \]
		is an equivalence.
	\end{enumerate}
\end{defin}

\begin{rem} \label{rem:weak_inf_cart_implies_strong_inf_cart}
	By replacing the mapping stacks with the mapping spaces in $\dAnSt$ in the above definition we get the versions of being infinitesimally cartesian and convergent.
	Clearly, if a derived analytic stack $F$ is strongly infinitesimally cartesian (resp.\ strongly convergent), then $F$ is also infinitesimally cartesian (resp.\ convergent).
	It turns out that the vice-versa holds as well.
	This is easy to see in the case of the infinitesimally cartesian property.
	Indeed, suppose that $F$ is infinitesimally cartesian and let $V$, $M$ and $d \colon V[M] \to V$ as in the above definition.
	We have to prove that for every $U \in \dStn$, the map
	\begin{equation} \label{eq:strong_inf_cart}
		\Map(U, \bfMap(V_d[M[1]]), F) \to \Map(U,\bfMap(V, F)) \times_{\Map(U, \bfMap(V[M], F))} \Map(U, \bfMap(V, F))
	\end{equation}
	is an equivalence.
	Using the universal property of the mapping stack, we can rewrite the above map as
	\[ \Map(U \times V_d[M[1]], F) \to \Map(U \times V, F) \times_{\Map(U \times V[M], F)} \Map(U \times V, F) . \]
	If $q \colon U \times V \to V$ denotes the canonical projection, then we remark that
	\[ U \times (V[M]) \simeq (U \times V)[q^* M] , \]
	and similarly
	\[ U \times (V_d[M[1]]) \simeq (U \times V)_{q^*(d)}(q^*M[1]) . \]
	Therefore, it is enough to apply the infinitesimally cartesian property of $F$ to $U \times V$, $q^*(M)$ and to the analytic derivation $q^*(d)$ to conclude that the map \eqref{eq:strong_inf_cart} is an equivalence as well.
	
	For the convergency property, there is a similar argument, which is however slightly trickier.
	In the algebraic setting, we refer to \cite[Proposition 2.1.8]{DAG-XIV}.
	The same argument applies in the analytic setting thanks to the results of \cite{Porta_DCAGII}.
\end{rem}

\begin{lem} \label{lem:infinitesimally_cartesian}
	\begin{enumerate}
		\item The stacks $\bfAnCoh^-$ and $\bfAnPerf$ are strongly infinitesimally cartesian and strongly convergent;
		\item for any derived analytic stack in $\infty$-categories $G \in \dAnSt^{\mathrm{cat}}$, the stacks
		\[ \bfAnCoh^-(G) \coloneqq \bfMap(G, \bfAnCoh^-) \quad \text{and} \quad \bfAnPerf(G) \coloneqq \bfMap(G, \bfAnPerf) \]
		are strongly infinitesimally cartesian and strongly convergent.
	\end{enumerate}
\end{lem}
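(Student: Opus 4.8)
The plan is to reduce both statements to the single fact that the functor $W \mapsto \Coh^-(W)$ on derived \canal spaces is infinitesimally cohesive and convergent, and then to propagate this to the mapping stacks by a formal manipulation. By \cref{rem:weak_inf_cart_implies_strong_inf_cart} it suffices to check the weak form of each property, namely the form using mapping spaces in $\dAnSt$ in place of the mapping stacks of \cref{def:inf_cart_convergence}. For a representable $W$ the relevant mapping space computes (the maximal $\infty$-groupoid of) $\Coh^-(W)$, so for $\bfAnCoh^-$ the infinitesimally cartesian property unwinds to the requirement that $\Coh^-$ send the square-zero extension to a fibre product of $\infty$-categories, while strong convergence unwinds to the equivalence $\Coh^-(V) \simeq \varprojlim_n \Coh^-(\mathrm t_{\le n} V)$.

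For the infinitesimally cartesian property I would first invoke \cref{thm:equivalence_modules} to replace $\Coh^-(W)$ by the full subcategory of almost perfect objects of $\cO_W\alg\Mod$. By construction the structure ring of $V_d[M[1]]$ is the fibre product $\cO_V \times_{\cO_V \oplus M[1]} \cO_V$ of analytic rings, so what has to be shown is the \emph{cohesiveness of almost perfect modules}: passing to almost perfect modules turns this fibre product of rings into the fibre product of the associated $\infty$-categories, which is precisely the equivalence required in \cref{def:inf_cart_convergence}. Over ordinary $E_\infty$- (or simplicial commutative) rings this is a standard theorem of derived algebraic geometry; to transport it to the \canal setting I would use \cref{thm:equivalence_modules} to compute the ambient module categories algebraically, together with the base-change statement of \cref{thm:unramifiedness} along the closed immersions $V \hookrightarrow V_d[M[1]]$ (which are closed immersions precisely because $M[1] \in \Coh^{\le -2}$), so that the gluing of analytic module categories is identified with its algebraic counterpart. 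Strong convergence is the analytic analogue of \cite[Proposition 2.1.8]{DAG-XIV}: it follows from the convergence of the structure sheaf, $\cO_V \simeq \varprojlim_n \mathrm t_{\le n}\cO_V$, and from the recovery of an almost perfect module as the limit of its truncations, both available through \cite{Porta_DCAGII, Porta_Yu_Representability}. The stack $\bfAnPerf$ is then deduced from $\bfAnCoh^-$: perfectness is a fibrewise condition on an object of $\Coh^-$ that is insensitive to the $\infty$-topos, hence the perfect objects form a sub-$\infty$-category stable under all the equivalences just produced and testable after pullback to $V$; both properties therefore descend from $\Coh^-$ to $\Perf$.

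Part (2) is then purely formal, and here I would use the closed symmetric monoidal structure on $\dAnSt^{\mathrm{cat}}$. For every $W \in \dStn$ the internal-hom adjunction together with the commutativity of the product gives a natural equivalence $\bfMap(W, \bfMap(G,F)) \simeq \bfMap(G, \bfMap(W,F))$, and $\bfMap(G,-)$, being a right adjoint, preserves all limits. Applying $\bfMap(G,-)$ to the two defining equivalences for $F \in \{\bfAnCoh^-, \bfAnPerf\}$ obtained in part (1), namely the fibre-product square expressing the infinitesimally cartesian property and the limit $\varprojlim_n$ expressing convergence, then yields exactly the corresponding equivalences for $\bfMap(G,F)$. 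Hence $\bfAnCoh^-(G)$ and $\bfAnPerf(G)$ are again strongly infinitesimally cartesian and strongly convergent.

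I expect the genuine difficulty to be concentrated in the infinitesimally cartesian step of part (1). Cohesiveness of almost perfect modules is essentially formal over ordinary rings, but here the rings are analytic, and the real work is in checking that \cref{thm:equivalence_modules} and \cref{thm:unramifiedness} interact so that the fibre product of \emph{analytic} rings computes the fibre product of the module categories; everything else is either a direct unwinding of definitions via \cref{rem:weak_inf_cart_implies_strong_inf_cart} or a formal consequence of the closedness of $\dAnSt^{\mathrm{cat}}$.
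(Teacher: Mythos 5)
Your proposal is correct and follows essentially the same route as the paper: reduce part (2) to part (1) via the closed monoidal structure on $\dAnSt^{\mathrm{cat}}$ and the limit-preservation of $\bfMap(G,-)$, drop the adjective ``strong'' via \cref{rem:weak_inf_cart_implies_strong_inf_cart}, prove convergence from $\cO_V \simeq \varprojlim_n \tau_{\le n}\cO_V$ at the level of module categories, and reduce the infinitesimally cartesian property to a local statement about square-zero extensions of structure sheaves. The one point of divergence is the final input for the cartesian property: the paper localizes to a Stein compact via \cite[7.3.4.9]{HTT} and then cites \cite[Proposition 7.6]{DAG-IX} directly in the structured-topos setting, whereas you propose to pass through \cref{thm:equivalence_modules} to the underlying algebras and invoke the algebraic cohesiveness of almost perfect modules over the fibre product of rings. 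Your route is viable (the underlying-algebra functor is evaluation at $\mathbf A^1_{\mathbb C}$, hence preserves the defining pullback of analytic rings, and the reduction to Stein compacts is still needed to work with honest rings of sections), but note that \cref{thm:unramifiedness} is not really what carries the argument there --- the work is done by the identification of module categories and the algebraic clutching theorem; the paper's citation packages exactly this.
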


\begin{proof}
	The second statement follows from the first as follows: let $V \in \dStn$ and let $M \in \Coh^{\le -1}(V)$.
	Let $s \colon V[M] \to V$ be any analytic derivation and consider the associated analytic square-zero extension $V_s[M[1]]$.
	Then condition (1) implies that
	\[ \bfAnCoh^-(V_s[M[1]]) \simeq \bfAnCoh^-(V) \times_{\bfAnCoh^-(V[M])} \bfAnCoh^-(V) , \]
	in $\dAnSt$.
	It follows that
	\[ \bfMap(G, \bfAnCoh^-(V_s[M[1]])) \simeq \bfMap(G, \bfAnCoh^-(V)) \times_{\bfMap(G, \bfAnCoh^-(V[M]))} \bfMap(G, \bfAnCoh^-(V)) \]
	is an equivalence.
	Using the universal property of the mapping stack, we can rewrite the above equivalence as
	\[ \bfMap(V_s[M[1]], \bfAnCoh^-(G)) \simeq \bfMap(V, \bfAnCoh^-(G)) \times_{\bfMap(V[M], \bfAnCoh^-(G))} \bfMap(V, \bfAnCoh^-(G)) . \]
	The proof for the convergency property is analogous.
	
	We are therefore left to prove that $\bfAnCoh^-$ (and $\bfAnPerf$) are strongly infinitesimally cartesian and strongly convergent.
	Moreover, in virtue of \cref{rem:weak_inf_cart_implies_strong_inf_cart}, we can drop the adjective ``strong''.
	We start by convergency.
	Let $V \in \dStn$ and represent it as a structured $\cTan$-topos $(\cV, \cO_V)$.
	Then
	\[ \mathrm t_{\le n}(V) \simeq (\cV, \tau_{\le n} \cO_V) . \]
	We now have that
	\[ \cO_V \Mod \simeq \varprojlim \tau_{\le n} \cO_V \Mod , \]
	and this equivalence respects (almost) perfect modules.
	In particular, we see that $\bfAnCoh^-$ and $\bfAnPerf$ are convergent.
	
	We now turn to the infinitesimally cartesian property.
	Let $V \in \dStn$, $M \in \Coh^{\le - 1}(V)$ and $s \colon V[M] \to V$ be as in \cref{def:inf_cart_convergence}.
	Represent again $V$ as a $\cTan$-structured topos $V = (\cV, \cO_V)$.
	Then
	\[ V[M] \simeq (\cV, \cO_V \oplus M) , \]
	where $\cO_V \oplus M$ is the analytic split square-zero extension of $\cO_V$ by $M$.
	Similarly,
	\[ V_s[M[1]] = (\cV, \cO_V \oplus_s M[1]) , \]
	where $\cO_V \oplus_s M[1]$ is the analytic square-zero extension of $\cO_V$ by $M$ determined by $s$.
	Since $\bfAnCoh^-$ and $\bfAnPerf$ are sheaves for the analytic topology, it is enough to reason locally.
	Using \cite[7.3.4.9]{HTT}, we can reduce ourselves to the case of a Stein compact in $V$.
	In this case, the lemma is a direct consequence of \cite[Proposition 7.6]{DAG-IX}.
\end{proof}

We are finally ready to prove the main result of this paper:

\begin{thm} \label{thm:derived_RH}
	Let $X$ be a smooth analytic space.
	The Riemann-Hilbert map $\eta_{\mathrm{RH}} \colon X\DR \to X\B$ induces an equivalences
	\[ \eta_{\mathrm{RH}}^* \colon \bfAnCoh^-(X\B) \to \bfAnCoh^-(X\DR) \]
	and
	\[ \eta_{\mathrm{RH}}^* \colon \bfAnPerf(X\B) \to \bfAnPerf(X\DR) \]
\end{thm}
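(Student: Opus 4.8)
The plan is to establish the statement for $\bfAnCoh^-$ first and then to bootstrap the one for $\bfAnPerf$. Since $\eta_{\mathrm{RH}}^*$ is a morphism of sheaves $\dStn\op \to \Catst$, it is an equivalence if and only if it becomes one after evaluation at every $V \in \dStn$; thus it suffices to prove that
\[ \eta_{\mathrm{RH}}^* \colon \Coh^-(V \times X\B) \to \Coh^-(V \times X\DR) \]
is an equivalence for each fixed $V$. First I would reduce to the case where $X\topl$ is weakly contractible. Both assignments $X \mapsto X\B$ and $X \mapsto X\DR$ send \v{C}ech nerves of $\tauan$-covers to colimit diagrams (for $X\DR$ this is \cref{cor:De_Rham_colimits}, and for $X\B$ it follows from the fact that $(-)\topl$ does so), while $\bfAnCoh^-$ satisfies descent. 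Choosing a hypercover $X_\bullet \to X$ by Stein open subsets with weakly contractible underlying topological space therefore writes both sides as limits over $X_\bullet$ and exhibits $\eta_{\mathrm{RH}}^*$ as the induced map of these limits; as a limit of equivalences is an equivalence, we may assume $X\topl$ weakly contractible, so that $X\B \simeq *$ and $\eta_{\mathrm{RH}}$ becomes the projection $p \colon V \times X\DR \to V$.

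Next I would run an induction along the Postnikov tower of $V$. By \cref{lem:infinitesimally_cartesian} both stacks $\bfAnCoh^-(X\B)$ and $\bfAnCoh^-(X\DR)$ are strongly convergent, so that
\[ \Coh^-(V \times X\DR) \simeq \varprojlim_n \Coh^-(\mathrm{t}_{\le n}(V) \times X\DR) \]
and similarly on the Betti side, compatibly with $\eta_{\mathrm{RH}}^*$; it therefore suffices to treat every truncated $V$, which I do by induction on the truncation level. The base case $V = \trunc(V)$ is an ordinary Stein space and is precisely \cref{prop:RH_almost_perfect_modules_ordinary_base}. For the inductive step, set $W \coloneqq \mathrm{t}_{\le n}(V)$ and recall that the transition map $\mathrm{t}_{\le n+1}(V) \to W$ is a (non-split) analytic square-zero extension, classified by a derivation into a split extension $W[M]$ with $M \in \Coh^{\le -1}(W)$. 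Evaluating the strongly infinitesimally cartesian equivalence of \cref{lem:infinitesimally_cartesian} at the point $*_{\mathbb C}$, for both stacks, identifies $\Coh^-(\mathrm{t}_{\le n+1}(V) \times X\DR)$ with the fibre product
\[ \Coh^-(W \times X\DR) \times_{\Coh^-(W[M] \times X\DR)} \Coh^-(W \times X\DR) , \]
and likewise for $X\B$. The inductive hypothesis supplies the equivalence on $W$, whereas \cref{cor:RH_inductive_step} supplies it on the split square-zero extension $W[M]$; since $\eta_{\mathrm{RH}}^*$ is then an equivalence at each vertex of the fibre product, it is an equivalence on $\mathrm{t}_{\le n+1}(V)$. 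This closes the induction and proves the statement for $\bfAnCoh^-$.

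For $\bfAnPerf$ I would deduce the result from the $\bfAnCoh^-$ case. Fully faithfulness of $\eta_{\mathrm{RH}}^* \colon \Perf(V \times X\B) \to \Perf(V \times X\DR)$ is immediate from the commuting square of fully faithful inclusions into $\Coh^-$, whose bottom arrow we have just shown to be an equivalence. For essential surjectivity, using that $\bfAnPerf$ is a sheaf I would again reduce to $X\topl$ weakly contractible, so that $V \times X\B \simeq V$; given $\cF \in \Perf(V \times X\DR)$, the $\bfAnCoh^-$ equivalence produces $\cG \in \Coh^-(V)$ with $\eta_{\mathrm{RH}}^*(\cG) \simeq \cF$, and a point $x \in X$ yields a section $s \colon V \to V \times X\DR$ of $p$ with $\cG \simeq s^* \eta_{\mathrm{RH}}^*(\cG) \simeq s^* \cF$; since $s^*$ preserves perfection, $\cG$ is perfect, exactly as in \cref{cor:RH_perfect_modules_ordinary_base}. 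Finally, because $\eta_{\mathrm{RH}}^*$ is given by pullback it is canonically symmetric monoidal, so both equivalences automatically refine to equivalences of stacks in symmetric monoidal stable $\infty$-categories.

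The main obstacle is the inductive step: one must pass from the classical base to an arbitrary derived Stein base, and the device that makes this possible is the strongly infinitesimally cartesian property, which converts the non-split Postnikov square-zero extensions into fibre products of split ones and thereby reduces each step to \cref{cor:RH_inductive_step}; strong convergence is what lets the infinite tower be reassembled. Care is needed to ensure that every reduction---to weakly contractible $X$, to truncated $V$, and (inside the cited inputs) to Stein compacts where representability and proper base change apply---is carried out compatibly with $\eta_{\mathrm{RH}}^*$.
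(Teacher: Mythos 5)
Your proposal is correct and follows essentially the same route as the paper: reduction to weakly contractible $X\topl$ by descent, induction along the Postnikov tower of $V$ with \cref{prop:RH_almost_perfect_modules_ordinary_base} as the base case, and the strongly convergent and strongly infinitesimally cartesian properties of \cref{lem:infinitesimally_cartesian} combined with \cref{cor:RH_inductive_step} to handle the inductive step. Your explicit bootstrap of the $\bfAnPerf$ statement from the $\Coh^-$ one (fully faithfulness from the commuting square, essential surjectivity via the section $s$ attached to a point of $X$) is somewhat more detailed than the paper's write-up, which delegates this to the argument of \cref{cor:RH_perfect_modules_ordinary_base}, but it is the same idea.
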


\begin{proof}
	Since both $\bfAnCoh^-(X\B)$ and $\bfAnCoh^-(X\DR)$ are sheaves in $X$, it is enough to prove the assertion when $X\topl$ is contractible.
	In this way, $X\B \simeq *$ and $\eta_{\mathrm{RH}}$ can be identified with the canonical morphism $X\DR \to *$.
	In order to prove the theorem, we have to check that for every $V \in \dStn$, the canonical projection $p \colon V \times X\DR \to V$ induces an equivalence
	\[ p^* \colon \Coh^-(V) \to \Coh^-(V \times X\DR) . \]
	
	We proceed by induction on the Postnikov tower of $V$ (note that this is made possible by the results of \cite{Porta_DCAGII}).
	When $V$ is discrete, the desired result coincides exactly with the statements of \cref{prop:RH_almost_perfect_modules_ordinary_base} and of \cref{cor:RH_perfect_modules_ordinary_base}.
	Suppose now that the statement holds true for $\mathrm t_{\le n}(V)$.
	Set $M \coloneqq \pi_{n+1}(\cO_V)[n+2]$.
	Then there exists an analytic derivation $s \colon \mathrm t_{\le n}(V)[M] \to \mathrm t_{\le n}(V)$ such that the diagram
	\[ \begin{tikzcd}
		\mathrm t_{\le n}(V)[M] \arrow{r}{s} \arrow{d}{s_0} & \mathrm t_{\le n}(V) \arrow{d} \\
		\mathrm t_{\le n}(V) \arrow{r} & \mathrm t_{\le n + 1}(V)
	\end{tikzcd} \]
	is a pushout in $\dAn_{\mathbb C}$.
	Since $\bfAnCoh^-(X\B)$ and $\bfAnCoh^-(X\DR)$ are infinitesimally convergent by \cref{lem:infinitesimally_cartesian}, they both take this pushout to a pullback square.
	Since by induction hypothesis we have an equivalence
	\[ \Coh^-(\mathrm t_{\le n}(V)) \to \Coh^-(\mathrm t_{\le n}(V) \times X\DR) , \]
	we are then reduced to prove that the map
	\[ \Coh^-(\mathrm t_{\le n}(V)[M]) \to \Coh^-(\mathrm t_{\le n}(V)[M] \times X\DR) \]
	is an equivalence as well.
	This is precisely what \cref{cor:RH_inductive_step} guarantees to be true.
\end{proof}

\begin{rem}
	The equivalence proven in \cref{thm:derived_RH} can be promoted to an equivalence of stacks with values in symmetric monoidal stable $\infty$-categories.
	Indeed, both $\bfAnCoh^-$ and $\bfAnPerf$ can be promoted to functors
	\[ \bfAnCoh^{-, \otimes}, \bfAnPerf^\otimes \colon \dStn\op \to \Cat_\infty^{\mathrm{st}, \otimes} , \]
	and the forgetful functor $\Cat_\infty^{\mathrm{st}, \otimes} \to \Cat_\infty$ is conservative.
\end{rem}

\begin{rem}
	The version of the Riemann-Hilbert correspondence can be used to prove the following (weaker) statement: let $X$ be a smooth and proper algebraic variety.
	Then there is an equivalence
	\[ \bfMap(X\B, \bfPerf)\an \simeq \bfMap(X\DR, \bfPerf)\an . \]
	Indeed, this equivalence is a consequence of \cref{thm:derived_RH} and the following facts:
	\begin{enumerate}
		\item the analytification of $\bfAnPerf$ coincides with the analytic stack of perfect complexes $\bfAnPerf$;
		\item there are equivalences
		\[ \bfMap(X\B, \bfPerf)\an \simeq \bfMap((X\an)\B, \bfAnPerf) \simeq \bfMap(X\DR, \bfPerf)\an \simeq \bfMap((X\an)\DR, \bfAnPerf) . \]
	\end{enumerate}
	We expect both these properties to be true.
	Proofs will appear in a separate paper, in collaboration with C.\ Di Natale and J.\ Holstein \cite{DiNatale_Analytification_mapping_stacks}.
\end{rem}

\bibliographystyle{plain}
\bibliography{dahema}

\def\cprime{$'$}
\begin{thebibliography}{10}

\bibitem{Banica_Stanisla}
Constantin B\u{a}nic\u{a} and Octavian St\u{a}n\u{a}\c{s}il\u{a}.
\newblock {\em Algebraic methods in the global theory of complex spaces}.
\newblock Editura Academiei, Bucharest; John Wiley \& Sons, London-New
  York-Sydney, 1976.
\newblock Translated from the Romanian.

\bibitem{CPTVV}
Damien Calaque, Tony Pantev, Bertrand To{\"e}n, Michel Vaqui{\'e}, and Gabriele
  Vezzosi.
\newblock Shifted poisson structures and deformation quantization.
\newblock {\em arXiv preprint arXiv:1506.03699}, 2016.
\newblock To appear in Journal of Topology.

\bibitem{Kashiwara_DAgnolo_Irregular_riemann}
Andrea D'Agnolo and Masaki Kashiwara.
\newblock Riemann-hilbert correspondence for holonomic d-modules.
\newblock {\em arXiv preprint arXiv:1311.2374}, 2013.
\newblock To appear in Publications scientifiques de l'{IHES}.

\bibitem{Deligne_Equations_differentielles}
Pierre Deligne.
\newblock {\em \'Equations diff\'erentielles \`a points singuliers
  r\'eguliers}.
\newblock Lecture Notes in Mathematics, Vol. 163. Springer-Verlag, Berlin-New
  York, 1970.

\bibitem{DiNatale_Analytification_mapping_stacks}
Carmelo Di~Natale, Julian Holstein, and Mauro Porta.
\newblock Analytification of mapping stacks.
\newblock In preparation, 2017.

\bibitem{DiNatale_Global_Period_2016}
Carmelo Di~Natale and Julian~VS Holstein.
\newblock The global derived period map.
\newblock {\em arXiv preprint arXiv:1607.05984}, 2016.

\bibitem{Gaitsgory_Indschemes}
Dennis Gaitsgory and Nick Rozenblyum.
\newblock D{G} indschemes.
\newblock In {\em Perspectives in representation theory}, volume 610 of {\em
  Contemp. Math.}, pages 139--251. Amer. Math. Soc., Providence, RI, 2014.

\bibitem{Hennion_Formal_gluing}
Benjamin Hennion, Mauro Porta, and Gabriele Vezzosi.
\newblock Formal gluing along non-linear flags.
\newblock {\em arXiv preprint arXiv:1607.04503}, 2016.

\bibitem{Kashiwara_Riemann_Hilbert}
Masaki Kashiwara.
\newblock The {R}iemann-{H}ilbert problem for holonomic systems.
\newblock {\em Publ. Res. Inst. Math. Sci.}, 20(2):319--365, 1984.

\bibitem{Kashiwara_Schapira_Regular_irregular}
Masaki Kashiwara and Pierre Schapira.
\newblock {\em Regular and irregular holonomic {D}-modules}, volume 433 of {\em
  London Mathematical Society Lecture Note Series}.
\newblock Cambridge University Press, Cambridge, 2016.

\bibitem{HTT}
Jacob Lurie.
\newblock {\em Higher topos theory}, volume 170 of {\em Annals of Mathematics
  Studies}.
\newblock Princeton University Press, Princeton, NJ, 2009.

\bibitem{DAG-IX}
Jacob Lurie.
\newblock {DAG IX}: Closed immersions.
\newblock Preprint, 2011.

\bibitem{DAG-VII}
Jacob Lurie.
\newblock {DAG VII}: Spectral schemes.
\newblock Preprint, 2011.

\bibitem{DAG-X}
Jacob Lurie.
\newblock {DAG X}: Formal moduli problems.
\newblock Preprint, 2011.

\bibitem{DAG-XII}
Jacob Lurie.
\newblock {DAG XII}: Proper morphisms, completions and the {G}rothendieck
  existence theorem.
\newblock Preprint, 2011.

\bibitem{DAG-XIV}
Jacob Lurie.
\newblock {DAG XIV}: Representability theorems.
\newblock Preprint, 2012.

\bibitem{Lurie_Higher_algebra}
Jacob Lurie.
\newblock Higher algebra.
\newblock Preprint, August 2012.

\bibitem{Porta_DCAGI}
Mauro Porta.
\newblock Derived complex analytic geometry {I}: {GAGA} theorems.
\newblock {\em arXiv preprint arXiv:1506.09042}, 2015.

\bibitem{Porta_DCAGII}
Mauro Porta.
\newblock Derived complex analytic geometry {II}: square-zero extensions.
\newblock {\em arXiv preprint arXiv:1507.06602}, 2015.

\bibitem{Porta_Yu_Higher_analytic_stacks_2014}
Mauro Porta and Tony~Yue Yu.
\newblock Higher analytic stacks and {GAGA} theorems.
\newblock {\em arXiv preprint arXiv:1412.5166}, 2014.

\bibitem{Porta_Yu_Representability}
Mauro Porta and Tony~Yue Yu.
\newblock Representability theorem in derived analytic geometry.
\newblock 2017.
\newblock In preparation.

\bibitem{Simpson_Moduli_representations_I}
Carlos~T. Simpson.
\newblock Moduli of representations of the fundamental group of a smooth
  projective variety. {I}.
\newblock {\em Inst. Hautes \'Etudes Sci. Publ. Math.}, (79):47--129, 1994.

\bibitem{Simpson_Moduli_representations_II}
Carlos~T. Simpson.
\newblock Moduli of representations of the fundamental group of a smooth
  projective variety. {II}.
\newblock {\em Inst. Hautes \'Etudes Sci. Publ. Math.}, (80):5--79 (1995),
  1994.

\bibitem{stacks-project}
The {Stacks Project Authors}.
\newblock {Stacks Project}.
\newblock \url{http://stacks.math.columbia.edu}, 2013.

\bibitem{Toen_Moduli}
Bertrand To\"en and Michel Vaqui\'e.
\newblock Moduli of objects in dg-categories.
\newblock {\em Ann. Sci. \'Ecole Norm. Sup. (4)}, 40(3):387--444, 2007.

\end{thebibliography}

\end{document}